\patchcmd{\@maketitle}{\artauthors}{{\artauthors}}{}{}
\theoremstyle{plain}\newtheorem{theorem}{Theorem}
\theoremstyle{plain}\newtheorem{proposition}[theorem]{Proposition}
\theoremstyle{plain}\newtheorem{lemma}[theorem]{Lemma}
\theoremstyle{plain}\newtheorem{corollary}[theorem]{Corollary}
\theoremstyle{plain}
\theoremstyle{definition}\newtheorem{definition}[theorem]{Definition}
\theoremstyle{remark}\newtheorem{remark}[theorem]{Remark}
\theoremstyle{definition}
\theoremstyle{definition}\newtheorem{assumption}{Assumption}
\numberwithin{theorem}{section} 
\numberwithin{equation}{section}
\DeclarePairedDelimiter{\norm}{\lVert}{\rVert}
\renewcommand{\d}{\mathrm{d}}
\begin{document}

\title[
	Fixed energy solutions to
	the Euler-Lagrange equations 
	of an indefinite Lagrangian with 
	affine Noether charge
]{
	Fixed energy solutions to
	the Euler-Lagrange equations 
	of an indefinite Lagrangian with \\
	affine Noether charge
}

\author[1]{\fnm{Erasmo} \sur{Caponio}}\email{erasmo.caponio@poliba.it}

\author[2]{\fnm{Dario} \sur{Corona}}\email{dario.corona@unicam.it}

\author[2,3]{\fnm{Roberto} \sur{Giamb\`o}}\email{roberto.giambo@unicam.it}

\author[4]{\fnm{Paolo} \sur{Piccione}}\email{paolo.piccione@usp.br}

\affil[1]{
	\orgdiv{Dipartimento di Meccanica, Matematica e Management},
	\orgname{Politecnico di Bari},
	\orgaddress{\city{Bari}, \country{Italy}}}
\affil[2]{
	\orgdiv{School of Science and Technology},
	\orgname{University of Camerino},
	\orgaddress{\city{Camerino}, \country{Italy}}}
\affil[3]{
	\orgdiv{Istituto Nazionale di Fisica Nucleare},
	\orgname{Sezione di Perugia},
	\orgaddress{\city{Perugia}, \country{Italy}}}
\affil[4]{
	\orgdiv{Institute of Mathematics and Statistics},
	\orgname{Universidade de S\~ao Paulo}, 
\orgaddress{\city{S\~ao Paulo}, \country{Brazil}}}

\abstract{
	We consider an autonomous,
	indefinite  Lagrangian $L$ admitting an infinitesimal
	symmetry $K$ whose associated Noether charge
	is linear in each tangent space.
	Our focus lies in investigating solutions to the  
	Euler-Lagrange equations having fixed energy and
	that connect a given point $p$ to a flow line $\gamma=\gamma(t)$
	of $K$ that does not cross $p$.
	By utilizing the invariance of $L$ under the flow of $K$,
	we simplify  the problem  into a two-point boundary problem.
	Consequently, we derive an equation that involves
	the differential of the ``arrival time'' $t$,
	seen as a functional  on the infinite dimensional manifold
	of connecting paths satisfying the semi-holonomic
	constraint defined by the Noether charge.
	When $L$ is  positively homogeneous of degree $2$ in the velocities,
	the resulting equation establishes 
	a variational principle that extends the Fermat's principle
	in a stationary spacetime.
	Furthermore, we also analyze the scenario
	where the Noether charge is affine.
}
\keywords{
	Indefinite action functional,
	Noether charge,
	Fermat principle,
	Critical point theory
}

\pacs[MSC Classification]{
	37J06, 
	53C50, 
	53C60, 
	70H03 
}
\maketitle
\bmhead{Acknowledgments}
E. Caponio is partially supported by PRIN PNRR P2022YFAJH 		
{\em ``Linear and Nonlinear PDE's: New directions and Applications''}.

\noindent D. Corona is partially supported by the FAPESP (S\~ao Paulo, Brazil) grant n. 2022/13010-3.

\noindent E. Caponio, D. Corona, R. Giamb\`o thank the partial support of GNAMPA INdAM - Italian National Institute of High Mathematics,
project CUP-E55F22000270001. 

\noindent P. Piccione is partially sponsored by FAPESP (S\~ao Paulo, Brazil),
grant n. 2022/16097-2, and by CNPq, Brazil.

\noindent We would like to thank a referee for providing us with  valuable comments.

\section{Introduction}
In the problem of finding  solutions with fixed energy for an autonomous
Lagrangian system with a finite number of degrees of freedom,
subject to two-point or periodic boundary conditions,
one viable approach is to allow for a
free interval of parametrization for the involved curves.
This method entails employing the action functional with fixed energy,
as originally defined by Ma\~n\'e
(see~\cite{mane97} and the survey \cite{Abbondandolo2013}).
Specifically,
given a pair of distinct points on a compact manifold $M$
and a fiberwise convex and superlinear Lagrangian $L$,
there always exists a solution connecting the two points
with a fixed energy value $\kappa$,
provided that $\kappa$ is strictly greater than the
so-called {\em Ma\~n\'e critical value}
$c(L)$~\cite[Theorem X]{CoDeIt97}.
This approach has also demonstrated its effectiveness in addressing
the challenging problem of establishing the existence of periodic solutions,
as explored in works such as
\cite{Contreras2006Calcvar,Corona2020JFPT,Corona2020Symmetry,AsBeMa21}.

A more basic approach involves fixing the parameter interval and the initial point $p$,
while allowing the final point to traverse along a given curve $\gamma$.
This is a common framework employed in General Relativity when studying causal geodesics,
which represent the paths of light rays (photons) or
the worldlines of massive particles.
In this scenario, the Lagrangian energy coincides with the conserved quantity of the geodesic,
namely the square of the norm of its velocity vector.
Consequently, the values $\kappa=0, -1$ correspond to
the energy levels of light rays and massive particles, respectively.
The curve $\gamma$ represents the worldline of an observer,
while $p$ signifies the event of emission or, in an alternative perspective,
$\gamma$ symbolizes the worldline of a source of light signals or massive particles,
and $p$ represents the event of detecting those signals
(in the latter case, the geodesics originate from $\gamma$ and terminate at $p$).
In the former scenario, the parameter value of $\gamma$
(more precisely,  its ``proper time'')
at the intersection point with a lightlike future-oriented curve
$z$ from $p$ to $\gamma$ is referred to as the \emph{arrival time} of $z$ \cite{kovner90}.
The future-oriented lightlike geodesics are then all and only
the stationary points of the arrival time with respect to any smooth
variation $z_\epsilon$ made by smooth future-oriented lightlike curves between $p$ and $\gamma$
\cite{Perlick19901319}.
This general statement is recognized as \emph{Fermat's principle in General Relativity}. 

In fact, when the spacetime is static or stationary, Levi-Civita first
introduced Fermat's principle in local coordinates in \cite{LeviCi17} and
\cite{LeviCi18} and observed that the geometry of lightlike geodesics in the
spacetime can be linked to  a metric in a spacelike slice, called {\em optical
metric}, which is  Riemannian, when the spacetime is static,  and Finslerian in
the stationary case, and that  allows describing and calculating various
geometric and causal properties of the spacetime through it 
\cite{Pham57,GibWer08,GiHeWW09,CaponioMasiello2011,Caponio2011}.
Subsequently, other variations of Fermat's principle have emerged, sharing fundamental elements while possessing distinct technical characteristics
(see, e.g.,
\cite{Quan56,Uhlenb75,fortunato1995,AntPic96,PerPic98,Frolov:2013sxa,CaJaSaIP});
moreover,  it has been generalized to massive particles
in~\cite{giannoni1998-calcvar,CaponioMasiello2011}.
Light rays and massive particles via variational methods have also been studied
in \cite{Giannoni1997375,giannoni1998-AnnInstPoin,Giannoni1999849,giannoni2000-JGeoPhy,Giannoni2002563,CaJaMa10}.
The use of Fermat's principle also emerges in the study of motion around black holes
\cite{Hod:2018elr},
as well as in the well-known phenomenon known as \emph{gravitational lensing}
\cite{Faraoni1992425,Nandor199645,Frittelli20021230071,Sereno:2003tk,Giambo2004117},
which has been specialized for particular exact solutions of the Einstein field equations,
such as Schwarzschild \cite{Virbhadra:1999nm} and NUT spacetimes \cite{Halla2020}.
Interestingly, the utilization of least-time principles in observational
cosmology opens up the possibility of interpreting observed instances of
gravitational lensing without the need to invoke the existence of dark matter \cite{Annila20112944}. 
We recommend referring readers to \cite{Perlick-lrr}
for a comprehensive review on gravitational lensing in a relativistic context. 

Additionally,
it is worth noting that the application of variational principles related with or inspired
by Fermat's principle has been extended and generalized beyond Euclidean or
Lorentzian geometry as evidenced
in~\cite{Perlick2006365,Duval08,Masiello2009783,GaPiVi12,CapSta16,caponio2018,Minguz19,Javaloyes2020,Herrera:2018qng,CaJaSaIP, caponio2021}.

This work contributes to the  latest type of research. We consider an
indefinite Lagrangian $L$ on a manifold $M$ that is invariant under a
one-dimensional group of local diffeomorphisms
generated by a complete vector field $K$.
The  Noether charge associated with $L$ is assumed to be linear in each tangent
space $T_xM$.
Our focus lies on solutions to the Euler-Lagrange equations of
the action functional of $L$ that connect a point $p$ to a flow line $\gamma$
of $K$ and having fixed energy $\kappa$.
Our approach is based on the variational setting in  \cite{caponio2023-calcvar},
which is inspired by \cite{giannoni1999}.
The main result in this work, Theorem~\ref{theorem:almostFermat}, at least when $L$ is $2$-homogeneous in the velocities,  extends   Fermat's principle as established in \cite{caponio2002-diffgeo},  that was 
specifically tailored to the framework explored in \cite{giannoni1999}.
Leveraging this extension, we provide proof of existence (Theorem~\ref{existmult}-(a))
and a multiplicity result (Theorem~\ref{existmult}-(b)) for such solutions.
Additionally, we delve into the analysis of the case where the
Noether charge is an affine function in Appendix~\ref{sec:affineNoether}.
In Appendix B we give some results
that link an assumption on the manifold of curves
that we consider in our variational setting,
called {\em pseudocoercivity},
with the notion of global hyperbolicity
for the cone structure associated with $L$ in the $2$-homogeneous case.

\section{Notations, assumptions and a class of examples}
\label{sec:notation}
Let $M$ be a smooth, connected manifold of dimension $(m+1)$,
where $m \geq 1$.
We denote the tangent bundle of $M$ as $TM$.
In this paper, we consider a Riemannian metric $g$ on $M$ as an auxiliary metric,
and we use $\norm{\cdot}\colon TM \to \mathbb{R}$ to represent its induced norm;
specifically, for any $v \in TM$, we have $\norm{v}^2 = g(v,v)$.
We represent an element of $TM$ as a pair $(x,v)$,
where $x$ belongs to $M$ and $v$ belongs to the tangent space $T_xM$.

Let $L\colon TM \to \mathbb{R}$ be a Lagrangian on $M$.
For any $(x,v) \in TM$,
we denote the vertical derivative of $L$ as $\partial_vL(x,v)[\cdot]$,
which is is defined as follows:
\begin{equation*}
	\label{eq:def-dvL}
	\partial_vL(x,v)[\xi] = \frac{d}{ds}L(x,v + s\xi)\bigg|_{s = 0},
	\quad \forall \xi \in T_xM.
\end{equation*}
We also need a derivative w.r.t. $x$, denoted by $\partial_xL(x,v)$.
This is defined only locally (in a system of coordinates) as:
\begin{equation*}
	\label{eq:def-dxL}
	\partial_xL(x,v)[\xi]
	= \sum_{i = 0}^{m} \frac{\partial L}{\partial x^i}(x,v)\xi^i,
	\quad \forall \xi \in T_xM,
\end{equation*}
where $(x^0,\dots,x^{m})$ is a local coordinate system in a neighbourhood of $x$,
and consequently, $(x^0,\dots,x^{m},v^0,\dots,v^m)$ are the induced coordinates on $TM$.
With this notation, the Euler-Lagrange equations for a curve
$z\colon [0,1] \to M$ of class $C^1$ are given by:
\begin{equation}
	\label{eq:Euler-Lagrange}
	\frac{d}{ds}\partial_vL(z,\dot{z}) - \partial_xL(z,\dot{z}) = 0,
	\quad \forall s \in [0,1],
\end{equation}
where $\dot{z}$ denotes the derivative of $z$ with respect to the parameter $s$.
It is well-known that the \emph{energy function} $E\colon TM \to \mathbb{R}$, defined as:
\begin{equation*}
	\label{eq:def-E}
	E(x,v) = \partial_vL(x,v)[v] - L(x,v),
\end{equation*}
is a first integral of the Lagrangian system.
Therefore, if $z\colon [0,1] \to M$ is a solution of the Euler-Lagrange equations,
there exists a constant $\kappa \in \mathbb{R}$ such that:
\begin{equation}
	\label{eq:E-const}
	E(z,\dot{z}) = \kappa,
	\quad \forall s \in [0,1].
\end{equation}

\begin{assumption}
	\label{ass:L}
	The Lagrangian $L\colon TM \to \mathbb{R}$
	satisfies the following conditions:
	\begin{itemize}
		\item $L$ is a $C^1$ function on $TM$;
		\item There exists a complete $C^3$ vector field $K$ on $M$ such that
			$L$ is invariant under the one-parameter group of
			$C^3$ diffeomorphisms of $M$ generated by $K$ 
			(we refer to $K$ as an \emph{infinitesimal symmetry of $L$});
		\item
			The \emph{Noether charge},
			i.e., the map $(x,v)\in TM\mapsto \partial_v L(x,v)[K]\in\mathbb{R}$,
			is a $C^1$ one-form $Q$ on $M$, namely
			\begin{equation}
				\label{eq:noether}
				\partial_vL(x,v)[K]=Q(v);
			\end{equation}
		\item For every $x \in M$, the following equality holds:
			\begin{equation}
				Q(K) = -1\label{QKmeno1}
			\end{equation}
	\end{itemize}
\end{assumption}

\begin{remark}
	In \cite{caponio2023-calcvar},
	the Noether charge was assumed to be an
	\emph{affine}
	function on each tangent space.
	For the sake of simplicity,
	we present the main results under the more restrictive assumption of linearity.
	A discussion about the affine case is given in Appendix~\ref{sec:affineNoether}.
\end{remark}
\begin{remark}
	The $C^3$ regularity condition on $K$ is needed to get
	that a certain map constructed by using the flow of $K$
	is a diffeomorphism
	(see Proposition~\ref{prop:N_pq-Npgammat}).
	We don't know if the regularity of $K$ can be lowered there to $C^1$.
\end{remark}

\begin{assumption}
	\label{ass:L1}
	The Lagrangian $L_c\colon TM \to \mathbb{R}$,
	defined by
	\begin{equation}
		\label{eq:def-Lc}
		L_c(x,v) \coloneqq L(x,v) + Q^2(v),
	\end{equation}
	satisfies the following conditions:
	\begin{itemize}
		\item there exists a continuous function $C\colon M\to(0,+\infty)$
			such that for all $(x,v)\in TM$,
			the following inequalities hold:
			\begin{align}
				\label{eq:pinched}
				L_c(x,v) &\leq C(x)\big(\norm{v}^2+1\big);\\
				\label{eq:partialxpinched}
				|\partial_x L_c(x,v)| &\leq C(x)\big(\norm{v}^2+1\big);\\
				\label{eq:partialvpinched}
				|\partial_v L_c(x,v)| &\leq C(x)\big(\norm{v}+1\big);
			\end{align}
		\item 
there exists a continuous function $\lambda\colon M \to (0,+\infty)$
			such that for each $x\in M$ and for all $v_1, v_2\in T_xM$,
			the following inequality holds:
			\begin{equation}
				\label{eq:monotone}
				\big(\partial_v L_c(x,v_2)-\partial_v L_c(x,v_1)\big)[v_2-v_1]
				\geq \lambda(x)\norm{v_2 - v_1}^2;
			\end{equation}
	\end{itemize}
\end{assumption}

\begin{remark}\label{LcNoether}
	As proven in~\cite[Proposition 2.5]{caponio2023-calcvar},
	$K$ is also an infinitesimal symmetry of $L_c$,
	and a simple computation shows that
	\[
		\partial_v L_c(x,v)[K] = - Q(v).
	\]
\end{remark}

\begin{remark}
	\label{rem:stationary-local-structure}
	From~\cite[Proposition 7.4]{caponio2023-calcvar},
	if $L$ satisfies Assumptions~\ref{ass:L} and \ref{ass:L1},
	it admits a \emph{stationary product type local structure}.
	This means that for each point $p \in M$,
	there exists a neighbourhood $U_p \subset M$,
	an open neighborhood $S_p$ of $\mathbb{R}^{m}$,
	an open interval $I_p$ of $\mathbb{R}$,
	and a diffeomorphism $\phi\colon S_p \times I_p \to U_p$ such that,
	denoting $t$ as the natural coordinate of $I_p$,
	\begin{equation*}
		\label{eq:K-in-local-chart}
		\phi(\partial_t) = K|_{U_p},
	\end{equation*}
	and the function $L$ can be expressed as follows:
	\begin{equation}
		\label{eq:L-localchart}
		L(x,v) = L\circ \phi \big((y,t),(\nu,\tau)\big)
		= L_0(y,\nu) + \omega_y(\nu)\tau - \frac{1}{2}\tau^2,
	\end{equation}
	where
	\begin{itemize}
		\item $(y,t)\in S_p \times I_p$,
			$(\nu,\tau)\in \mathbb{R}^{m}\times\mathbb{R}$,
			and $(x,v) = \phi\big((y,t),(\nu,\tau)\big)$;
		\item $L_0 \in C^1(S_p)$ is a Lagrangian that satisfies
			the growth conditions \eqref{eq:pinched}--\eqref{eq:partialvpinched}
			with respect to the norm $\norm{\cdot}_{S_p}$
			and it is fiberwise strongly convex, i.e., \eqref{eq:monotone} holds 
			(with $L_c$ replaced by $L_0$)
			for some function $\lambda\colon S_p \to (0,+\infty)$;
		\item $\omega_y$ is the $C^1$ one-form induced by $Q$ on $S_p$.
	\end{itemize}
	Using this notation, we have the following equalities:
	\begin{align}
		\label{eq:Q-in-charts}
		Q(v) &= Q\circ \phi(\nu,\tau) = \omega_y(\nu) - \tau;\\
		\label{eq:Lc-in-charts}
		L_c(x,v) &=
		L_c\circ \phi_* \big((y,t),(\nu,\tau)\big)
		= L_0(y,\nu) + \omega_y^2(\nu) - \omega_y(\nu)\tau + \frac{1}{2}\tau^2;\nonumber
	\end{align}
	and
	\begin{equation}
		\label{eq:E-in-charts}
		E(x,v) = E\circ \phi_{*}\big((y,t),(\nu,\tau)\big)
		= E_0(y,\nu) + \omega_y(\nu)\tau - \frac{1}{2}\tau^2,
	\end{equation}
	where
	$E_0(y,\nu) = \partial_\nu L_0(y,\nu)[\nu] - L_0(y,\nu)$.
\end{remark}

\begin{remark}
	\label{rem:E0-min0}
	For every $p \in M$, let $\phi_p\colon S_p \times I_p \to M$
	be a mapping that satisfies~\eqref{eq:L-localchart}.
	Since $L_0$ is fiberwise strongly convex, we can conclude that
	\begin{equation}
		\label{eq:min-E0}
		E_0(y,\nu) > E_0(y,0) = - L_0(y,0),
		\qquad \forall \nu \ne 0,
	\end{equation}
	Indeed, 
	from the strict convexity of $L_0$
	we have
	\[
		L_0(y,0) > L(y,\nu) + \partial_\nu L(y,\nu)[-\nu],
		\qquad \forall \nu \ne 0.
	\]
\end{remark}

\begin{assumption} 
	\label{ass:bounds}
	We require:
	\begin{equation}
		\label{L0}
		\sup_{x\in M} L(x,0)<+\infty. 
	\end{equation}
\end{assumption}
\begin{remark}
	We need the last assumption to guarantee the existence 
	of $\kappa \in \mathbb{R}$ satisfying~\eqref{eq:condition-on-kappa},
	which is a key condition for our main result.
\end{remark}

\subsection{Lorentz-Finsler metrics}
\label{sec:LF}

Provided the existence of an infinitesimal symmetry,
an important kind of Lagrangians that satisfy the above assumptions is 
given by Lorentz-Finsler metrics, introduced by J. K. Beem
in \cite{Beem70}. 
\begin{definition}
	\label{def:Lorentz-Finsler-metric}
	Let $M$ be a smooth, connected manifold of dimension $m+1$.
	A Lagrangian $L_F\colon TM \to \mathbb{R}$ is called 
	\emph{Lorentz-Finsler metric} if it satisfies the following conditions:
	\begin{itemize}
		\item[(a)]$L_F \in C^1(TM) \cap C^2(TM \setminus 0)$,
			where $0$ denotes the zero section of $TM$;
		\item[(b)] $L_F(x,\lambda v) = \lambda^2 L_F(x,v)$, for all $\lambda > 0$;
		\item[(c)] for any $(x,v)\in TM \setminus 0$, the vertical Hessian of $L_F$,
			i.e. the symmetric matrix
			\begin{equation*}
				\label{eq:def_galphabeta}
				(g_F)_{\alpha\beta}(x,v)\coloneqq
				\frac{\partial^2 L_F}{\partial v^\alpha \partial v^\beta}(x,v),
				\quad \alpha,\beta = 0,\dots,m,
			\end{equation*}
			is non-degenerate with index $1$.
	\end{itemize}
\end{definition}
\begin{remark}\label{weakFinsler}
	The regularity conditions required on $L_F$ are sometimes too rigid
	and we relax them to include some interesting classes of Lagrangians
	(see \cite{LaPeHa2012, caponio2018}). 	
	The first and the last conditions above will be replaced  by:
	\begin{itemize}
		\item[(a')]Let $M$ be a smooth, connected,  manifold of dimension $m+1$,
			$m\geq 1$, and  $L_F\in C^{1}(TM)\cap C^2(\mathcal O$),
			where $\mathcal O\subset TM\setminus 0$ is such that 
			$\mathcal O_x\coloneqq\mathcal O\cap T_xM\neq \emptyset $
			for all $x\in M$,
			and $\mathcal O_x$ is an open set in $T_xM$
			which  is a  linear cone
			(i.e. $\lambda v\in \mathcal O_x$, for all $\lambda >0$,
			if $v\in\mathcal O_x$);
			moreover, for any $v_1, v_2\in T_xM$ there exist
			two sequences of vectors $v_{1k}, v_{2k}$ such that,
			for all $k\in\mathbb N$,
			the segment with extreme points $v_{1k}$ and $v_{2k}$
			is entirely contained in $\mathcal O_x$
			and $v_{ik}\to v_i$,  $i=1,2$.
		\item[(c')] Condition (c) is valid  for each $(x,v)\in \mathcal O$;
			moreover  the  eigenvalues $\lambda_i(x,v)$
			of $(g_F)_{\alpha,\beta}(x,v)$
			are bounded away from $0$ on $\mathcal O_x$,
			i.e. there exists $\lambda_+(x)>0$ such that 
			\begin{equation}
				\label{lambdai}
				|\lambda_i(x,v)|\geq \lambda_+(x),
			\end{equation}
			for all $i \in \{0,\ldots ,m\}$ and $v\in \mathcal O_x$.    
	\end{itemize}
\end{remark}

If $L_F$ is a Lorentz-Finsler metric,
then the couple $(M,L_F)$ is called a
\emph{Finsler spacetime}.

The study of the notion of a Finsler spacetime has received renewed impetus
from various sources. 
V. Perlick's work \cite{Perlick2006365}, which explores Fermat's principle,
was particularly influential.
Subsequent contributions came from \cite{GiGoPo07}
(also see \cite{KoStSt09}),
which revived the research initiated by G. Y. Bogoslowsky
\cite{Bogosl77,Bogosl77a,Bogosl94}, and from \cite{GiLiSi07}.
Additional momentum was provided by the works of V. A. Kosteleck\'y
and collaborators \cite{Kostel11,ColMcD12,KoRuTs12,Russel15}, as well as
C. Pfeifer, N. Voicu, and their coworkers
(refer to \cite{PfeWoh11,FusPab16,HohPfe17,Voicu17,FuPaPf18,HoPfVo19} for further details).
Notable mathematical contributions include \cite{JavSan14a, Minguz15, Minguzzi2015}, which
have influenced the field in a different manner.
For a comprehensive historical overview, diverse
definitions of a Finsler spacetime, and additional references, interested
readers are directed to~\cite{GaPiVi12, Minguzzi2016, CapMas20,Javaloyes2020}.

As we will see later, the significance of Lorentz-Finsler metrics
relies on the $2$-homogeneity assumption.
This homogeneity ensures that the solutions
of the Euler-Lagrange equations,
with a suitably prescribed energy value
(in this case, less than or equal to $0$),
connecting a point to a flow line of the infinitesimal symmetry vector field,
are the ones for which the time of arrival is critical.
Therefore, Fermat's principle holds (see Remark~\ref{2hom}).

\begin{proposition}
	\label{prop:LF-satisfiies-assumptions}
	Let $L_F\colon TM \to \mathbb{R}$ be a Lorentz-Finsler metric
	satisfying $(a')$, $(b)$ and $(c')$ above,
	and assume there exists an infinitesimal symmetry  $K\colon M \to TM$
	such that \eqref{eq:noether} and \eqref{QKmeno1} hold.
	Then Assumptions~\ref{ass:L1} and~\ref{ass:bounds} hold.
\end{proposition}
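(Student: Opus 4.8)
The plan is to verify, for $L_c=L_F+Q^2$, the three growth bounds \eqref{eq:pinched}--\eqref{eq:partialvpinched}, the strong-convexity inequality \eqref{eq:monotone} (which together give Assumption~\ref{ass:L1}), and the bound \eqref{L0} (Assumption~\ref{ass:bounds}), by systematically exploiting the $2$-homogeneity (b) and the hypotheses (a'), (c').

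First, Assumption~\ref{ass:bounds} is immediate: evaluating (b) at $v=0$ gives $L_F(x,0)=\lambda^2 L_F(x,0)$ for every $\lambda>0$, whence $L_F(x,0)=0$ and $\sup_{x\in M}L(x,0)=0<+\infty$. For the growth conditions, I would differentiate the identity $L_F(x,\lambda v)=\lambda^2 L_F(x,v)$ to see that $\partial_x L_F$ is $2$-homogeneous and $\partial_v L_F$ is $1$-homogeneous in $v$; since $L_F\in C^1(TM)$, the suprema of $\abs{L_F}$, $\abs{\partial_x L_F}$ and $\abs{\partial_v L_F}$ over the (compact) $g$-unit sphere of each $T_xM$ are finite and depend continuously on $x$, and homogeneity upgrades them to \eqref{eq:pinched}--\eqref{eq:partialvpinched} for $L_F$. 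Because $Q$ is a $C^1$ one-form, $\abs{Q(v)}\le\norm{Q}_x\norm{v}$, so $Q^2$ together with its $x$- and $v$-derivatives obeys the same bounds; adding the two contributions yields \eqref{eq:pinched}--\eqref{eq:partialvpinched} for $L_c$.

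The heart of the matter is \eqref{eq:monotone}. On the cone $\mathcal O$ the function $L_F$ is $C^2$, so there $L_c$ is $C^2$ and its fibrewise Hessian is $g_{L_c}=g_F+2\,Q\otimes Q$, where $(Q\otimes Q)(\xi,\eta)=Q(\xi)Q(\eta)$ is positive semidefinite of rank one (it is the vertical Hessian of $v\mapsto Q(v)^2$). The key observation is that differentiating the Noether identity \eqref{eq:noether} in $v$ gives, for every $(x,v)\in\mathcal O$, $g_F(x,v)(K,\cdot)=Q(\cdot)$; in particular \eqref{QKmeno1} yields $g_F(x,v)(K,K)=Q(K)=-1$, so $K$ is uniformly $g_F$-timelike and spans the unique negative direction of the index-$1$ form $g_F(x,v)$. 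Splitting $T_xM=\mathbb R K\oplus K^{\perp}$ $g_F$-orthogonally and writing $w=aK+u$ with $a=-Q(w)$, a direct computation gives $g_{L_c}(w,w)=a^2+g_F(x,v)(u,u)$; since $g_F$ is positive definite on $K^{\perp}$, this is strictly positive for $w\ne 0$, so $g_{L_c}(x,v)$ is positive definite throughout $\mathcal O$.

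It then remains to make this bound uniform and to remove the restriction to $\mathcal O$. For the first point I would turn the previous splitting into a quantitative estimate: using the spectral gap \eqref{lambdai} of $g_F$ from (c') together with the normalisation $g_F(x,v)(K,K)=-1$ (both uniform in $v$), one bounds $a^2+g_F(x,v)(u,u)$ below by $\lambda(x)\norm{w}^2$ with $\lambda(x)>0$ continuous in $x$ and independent of $v\in\mathcal O_x$, giving $g_{L_c}(x,v)\ge\lambda(x)\,g$ on $\mathcal O_x$. Integrating this inequality along any segment lying in $\mathcal O_x$ yields \eqref{eq:monotone} whenever $[v_1,v_2]\subset\mathcal O_x$. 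Finally, for arbitrary $v_1,v_2\in T_xM$ I would invoke the approximation property in (a') to choose $v_{1k}\to v_1$, $v_{2k}\to v_2$ with $[v_{1k},v_{2k}]\subset\mathcal O_x$, apply the inequality to the pairs $(v_{1k},v_{2k})$, and pass to the limit using the continuity of $\partial_v L_c$ (valid since $L_c\in C^1(TM)$). I expect the main obstacle to be precisely this uniform-in-$v$ lower bound: because $\mathcal O_x$ need not be relatively compact and no upper spectral bound on $g_F$ is assumed, the constant $\lambda(x)$ must be extracted carefully from \eqref{lambdai} and from the fixed normalisation of $K$, rather than from a compactness argument.
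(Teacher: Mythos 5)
Your proposal is correct and follows essentially the same route as the paper: $L_F(x,0)=0$ for Assumption~\ref{ass:bounds}, homogeneity for the growth bounds, the identity $\partial_{vv}L_F(x,v)[K,\cdot]=Q(\cdot)$ (so $\ker Q=K^{\perp_{g_F}}$ and $g_F(K,K)=-1$) to show $\partial_{vv}L_c=\partial_{vv}L_F+2Q\otimes Q$ is positive definite on $\mathcal O$, and then the mean value theorem on segments in $\mathcal O_x$ plus the approximation property in (a') to get \eqref{eq:monotone} on all of $T_xM$. The only cosmetic difference is in how the uniform-in-$v$ lower bound $\lambda(x)>0$ is extracted from \eqref{lambdai}: the paper passes through the observation that $\det(g_c)=-\det(g_F)$ in a basis adapted to $\mathbb{R}K\oplus\ker Q$, while you work directly with the orthogonal splitting; both rest on the same normalisation $g_F(K,K)=-1$ and the spectral gap in (c').
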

\begin{proof}
	Assumption~\ref{ass:bounds} is ensured by 
	the $2$--homogeneity of $L_F$,
	since $L_F(x,0) = 0$ for every $x \in M$.
	Let us show that Assumption~\ref{ass:L1} holds. As a first step,
	we notice  that the Lagrangian
	$L_c\colon TM \to \mathbb{R}$, defined by
	\[
		L_c(x,v) = L_F(x,v) + Q^2(v),
	\]
	admits vertical Hessian at any
	$(x,v)\in \mathcal O$
	that is a positive definite bilinear form on $T_xM$.
	For any $(x,v) \in \mathcal O$, we 
	have
	\begin{equation}\label{HessLc}
		\partial_{vv}L_c(x,v) = 
		\partial_{vv}L_F(x,v) + 2 Q \otimes Q.
	\end{equation}
	For each $w \in T_x M$,
	we   have, thanks to \eqref{eq:noether},
	\begin{multline}
		\label{eq:Hessian_LF}
		\partial_{vv}L_F(x,v)[K,w]
		=\frac{\partial^2L_F}{\partial s \partial t} 
		(x,v + tK + sw)\Big|_{(s,t) = (0,0)}\\
		= \frac{\partial(\partial_v L_F(x,v + s w)[K])}{\partial s}\Big|_{s = 0}
		= \frac{\partial Q(v + sw)}{\partial s}\Big|_{s = 0}
		= Q(w),
	\end{multline}
	hence we obtain 
	\[
		\partial_{vv}L_c(x,v)[K,K]
		= \partial_{vv}L_F(x,v)[K,K] + 2 Q^2(K)
		= Q(K) + 2 = 1 > 0.
	\]
	Now consider $w \in \mathrm{ker}\, Q$;
	from~\eqref{eq:Hessian_LF} we have 
	$\partial_{vv}L_F(x,v)[w,K] = 0$,
	and since $\partial_{vv}L_F(x,v)$ has index $1$
	we obtain that 
	$\partial_{vv}L_c(x,v)[w,w] = \partial_{vv}L_F(x,v)[w,w]>0$,
	for all $w\in \ker Q$, 
	from which we conclude that 
	$\partial_{vv}L_c(x,v)[\cdot,\cdot]$ is positive definite.

	Let 
	\begin{equation}
		\label{lambda}
		\lambda(x)\colon\inf_{v\in\mathcal O_x}\min_{w\in T_x M, \|w\|=1}
		\partial_{vv}L_c(x,v)[w,w].
	\end{equation}
	As any $w\in \ker Q$ is orthogonal to $K_x$
	with respect to both bilinear forms $\partial_{vv}L_c(x,v)$
	and $\partial_{vv}L_F(x,v)$,
	by  \eqref{HessLc} we deduce that
	the determinants of $(g_F)_{\alpha\beta}$ and $(g_c)_{\alpha\beta}$
	are opposite numbers and then,
	from \eqref{lambdai} we conclude that  $\lambda(x)>0$, for all $x\in M$.

	Inequality~\eqref{eq:monotone} then follows by
	the mean value theorem applied to the function
	$v \in \mathcal O_x \mapsto \partial_vL_c(x,v)[v_2 - v_1] $,
	when $v_1$ and $v_2$ both belong to $\mathcal O_x$
	and the segment having them as extreme points is contained
	in $\mathcal O_x$ as well.
	Then, for each $x \in M$,~\eqref{eq:monotone} follows
	by continuity due to the property of approximation by segments in (a').
	The inequalities~\eqref{eq:pinched},~\eqref{eq:partialxpinched}
	and~\eqref{eq:partialvpinched} are 
	ensured by the fact that $L_c$ is $C^1$ on $TM$ and
	it is positive homogeneous of degree $2$ w.r.t. $v$.

\end{proof}
\begin{remark}
	\label{classicalFinsler}

	As shown in the above proof,
	the vertical Hessian of $L_c$ is positive definite on $\mathcal O$,
	the last being dense in $TM$.
	Hence, by homogeneity,
	$L_c$ is a non-negative fiberwise strongly  function on $TM$.
	Moreover, the vertical Hessian of $F_c\coloneqq \sqrt{L_c}$
	at any $(x,v)\in \mathcal O$
	is positive semi-definite (see, e.g.,~\cite[p.~8]{BaChSh00}).
	Hence, for any $v_1$ and $v_2$ belonging to $\mathcal O_x$
	defining a segment contained in $\mathcal O_x$,
	we get by Taylor's theorem,
	\[
		F_c(x,v_2)\geq F_c(x,v_1)+\partial_{v}F_c(x,v_1)[v_2-v_1].
	\]
	By continuity and the approximation by segments property in (a'),
	the above inequality holds on $TM$,
	hence $F_c$ is fiberwise convex and therefore it is a Finsler metric on $M$,
	(i.e., $F_c(x,\cdot)$ is  non-negative, positively homogeneous,
	and satisfies the triangle inequality on $T_xM$,
	for each $x\in M$)
	whose square is only of class $C^1$ on $TM$.

\end{remark}

As a consequence of Proposition~\ref{prop:LF-satisfiies-assumptions},
if $L_F\colon TM \to M$ is a Lorentz-Finsler metric 
and there exists a complete vector field $K$ such that 
Assumption~\ref{ass:L} holds,
then Remark~\ref{rem:stationary-local-structure}
ensures that $L_F$ can be locally expressed 
as follows:
\begin{equation}
	\label{eq:LF-local}
	L_F(x,v) = L_F\circ \phi((y,t),(\nu,\tau))
	= F^2(y,\nu) + \omega_y(\nu)\tau - \frac{1}{2}\tau^2,
\end{equation}
where $F\colon TS \to \mathbb{R}$ is
a Finsler metric on $S$, with $F^2\in C^1(TM)$.
Whenever  $L_F$ is not twice differentiable only at the line sub-bundle of $TM$ defined by $K$,
$F$  becomes a classical   Finsler metric on $S$,
(i.e.  $F^2 \in C^2(TS\setminus 0)$ and,
for each $y\in S$, $F(y,\cdot)$ is a Minkowski norm on $T_yS$,
see e.g. \cite[\S 1.2]{BaChSh00}).

Since in this case $K$ is a \emph{timelike Killing vector field},
namely it is an infinitesimal symmetry of $L_F$
such that $L_F(x,K) < 0$ for every $x$,
$(M,L_F)$ is called \emph{stationary} Finsler spacetime.
In particular,
if $L_F$  is twice differentiable on $TM\setminus 0$,
then $F^2(y,\cdot)$ in \eqref{eq:LF-local} must be
the square of the norm of a positive definite inner  product on $T_yS$.
We thank the referee for this observation.
In fact, a special kind of stationary Finsler spacetimes are the 
\emph{stationary Lorentzian manifolds}, 
namely those Lorentzian manifolds $(M,g_L)$
for which $g_L$ is a Lorentzian metric and
there exists a timelike Killing vector field for $g_L$.
In this case, the stationary product type local structure 
is given by 
\begin{equation*}
	\label{eq:g_L-local}
	g_L(v,v) = g_R(\nu,\nu)+\omega(\nu)\tau-\frac{1}{2}\tau^2,
\end{equation*} 
where $g_R$ is a  Riemannian metric
on an open neighbourhood $S$ of $\mathbb{R}^m$.
In this direction, the results in this paper improve
previous results about stationary Lorentzian metrics
(see,  \cite{fortunato1995,caponio2002-diffgeo,CaponioMasiello2011}),
since just $C^1$ stationary metrics with a $C^3$
timelike Killing vector field are allowed and both lightlike and
timelike geodesics can be considered in an unified setting.

\section{Variational setting}
\label{sec:fixed-energy-functional}
Let us fix a point $p \in M$
and consider a flow line $\gamma\colon \mathbb{R} \to M$
of $K$ that does not pass through $p$,
i.e., $p \notin \gamma(\mathbb{R})$.
We are interested in finding solutions of the Euler-Lagrange equations
that connect $p$ to points on $\gamma$
with a fixed energy $\kappa \in \mathbb{R}$.
Specifically, we seek to characterize curves $z\in C^1([0,1],M)$
that satisfy~\eqref{eq:Euler-Lagrange},
with $z(0) = p$, $z(1) \in \gamma(\mathbb{R})$,
and $E(z(s),\dot{z}(s)) = \kappa$ for all $s \in [0,1]$.

We define the action functional
$\mathcal{L}\colon H^1([0,1],M) \to \mathbb{R}$ as follows:
\begin{equation*}
	\label{eq:def-calL}
	\mathcal{L}(z) \coloneqq \int_{0}^{1} L(z,\dot{z})\, \d s.
\end{equation*}
Similarly, we define the energy functional:
\begin{equation*}
	\label{eq:def-calE}
	\mathcal{E}(z) \coloneqq \int_{0}^{1} E(z,\dot{z})\, \d s.
\end{equation*}
We note that both $\mathcal{L}$ and $\mathcal{E}$
are well-defined on $H^1([0,1],M)$ and they are respectively a $C^{1}$ and a $C^{0}$ functional  due to \eqref{eq:def-Lc}, the growth conditions \eqref{eq:pinched}--\eqref{eq:partialvpinched} and the fiberwise convexity of $L_c$ \eqref{eq:monotone} (see, e.g., the first part of the proof of Proposition 3.1 in \cite{Abbondandolo2009}). 

\begin{remark}
	\label{rem:energy-noFunctional}
	Henceforth, we will assume that $\mathcal E$ is a $C^1$ functional.
	This holds if $L$ is positively homogeneous of degree $2$
	in the velocities, since in that case $\mathcal E=\mathcal L$;
	moreover it holds if $L_c$ is a $C^2$,
	strongly convex Lagrangian on $TM$ with second derivatives satisfying
	assumptions (L1') in \cite[p. 605]{Abbondandolo2009}.
\end{remark}

Recalling that we have chosen a fixed point $p \in M$,
we define the set $\Omega_{p,r}(M)$ for every $r \in M$ as follows:
\[
	\Omega_{p,r}(M) \coloneqq \big\{ z \in H^1([0,1],M): z(0)=p, z(1) = r \big\},
\]
and we denote by $\mathcal{L}_{p,r}$ the restriction of $\mathcal{L}$ to $\Omega_{p,r}(M)$.

\begin{remark}
	\label{rem:E-constant-on-critical-points}
	According to \cite[Proposition A.1]{caponio2023-calcvar},
	if $z$ is a critical point of $\mathcal{L}_{p,r}$,
	then both $z$ and the function
	\[
		s \mapsto \partial_vL(z(s),\dot{z}(s))[\dot{z}(s)]
	\]
	are of class $C^1$.
	As a consequence, $z$ is a critical point of $\mathcal{L}_{p,r}$
	if and only if equation \eqref{eq:Euler-Lagrange}
	holds and there exists $\kappa \in \mathbb{R}$
	such that equation \eqref{eq:E-const} holds.
\end{remark}

\subsection{Preliminary results}
\label{sec:preliminary}

Recalling that $K$ is a complete vector field, we denote by $\psi\colon
\mathbb{R} \times M \to M$ the flow of $K$,
and by $\partial_u\psi$ and $\partial_x\psi$
the partial derivatives of $\psi(t,x)$ with respect to $t \in \mathbb{R}$ and $x \in M$,
respectively. 

Let us denote by $K^c$ the complete lift of $K$ to $TM$ (see, e.g., \cite{caponio2018}).
Then, for any $(x,v)\in TM$, the  flow $\psi^c$ of $K^c$ on $TM$ is given by
$\psi^c(t,x,v)=\big(\psi(t,x),\partial_x\psi(t,x)[v]\big)$,
and we have
\[
	K^c(L)\big(\psi^c(t,x,v)\big)=
	\dfrac{\partial \big (L\circ \psi^c\big)}{\partial t}(t,x,v).
\]
Since $K$ is an infinitesimal symmetry of $L$, we have
\begin{equation}
	\label{eq:L-Kinvariant}
	\dfrac{\partial \big (L\circ \psi^c\big)}{\partial t}(t,x,v)=0,
\end{equation}
which implies
\begin{equation}
	\label{eq:Killing-LocCoord}
	K^c(L)(x,v)
	= \frac{\partial L }{\partial x^h}(x,v)K^h(x)
	+ \frac{\partial L}{\partial v^h}(x,v)\frac{\partial K^h}{\partial x^i}(x)v^i =0.
\end{equation}
Moreover, from \eqref{eq:L-Kinvariant} we also obtain
\begin{equation}
	\label{eq:L-psiInvariant}
	L(x,v) = L\big(\psi(t,x),\partial_x\psi(t,x)[v]\big),
	\quad \forall (x,v) \in TM,\, t \in \mathbb{R},
\end{equation}
and consequently
\begin{equation}
	\label{eq:dvL-psiInvariant}
	\partial_v L(x,v)[\xi]
	= \partial_vL\big(\psi(t,x),\partial_x\psi(t,x)[v]\big)\big[\partial_x\psi(t,x)[\xi]\big].
\end{equation}

\begin{lemma}
	\label{lem:constant-Noether-charge}
	If $z\colon[0,1]\to M$ is a weak solution of
	the Euler-Lagrange equation~\eqref{eq:Euler-Lagrange}
	$($i.e. a critical point of $\mathcal{L}$ on $\Omega_{z(0),z(1)}(M)$$)$,
	then it is a $C^1$ curve and its Noether charge is constant,
	namely there exists $c \in \mathbb{R}$ such that
	\begin{equation*}
		\label{eq:const-Noether-charge}
		\partial_vL(z(s),\dot{z}(s))[K(z(s))] = c,
		\quad \forall s \in [0,1].
	\end{equation*}
\end{lemma}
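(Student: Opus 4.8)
The plan is to recognize the statement as a form of Noether's theorem and to verify conservation by a single differentiation in local coordinates, the vanishing of the derivative being supplied directly by the symmetry identity \eqref{eq:Killing-LocCoord}. First I would settle the regularity. Applying Remark~\ref{rem:E-constant-on-critical-points} with $p=z(0)$ and $r=z(1)$, a critical point $z$ of $\mathcal L$ on $\Omega_{z(0),z(1)}(M)$ is of class $C^1$ and satisfies the strong Euler--Lagrange equation \eqref{eq:Euler-Lagrange}; this already gives the first assertion of the lemma. Moreover, since $L\in C^1(TM)$ and $z,\dot z$ are continuous, the right-hand side $\partial_xL(z,\dot z)$ of \eqref{eq:Euler-Lagrange} is continuous in $s$, so in any local chart the components $s\mapsto \frac{\partial L}{\partial v^h}(z(s),\dot z(s))$ are of class $C^1$ with
\[
	\frac{d}{ds}\frac{\partial L}{\partial v^h}(z,\dot z)=\frac{\partial L}{\partial x^h}(z,\dot z).
\]
This is precisely the differentiability needed for the computation below.

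Next I would fix a subinterval of $[0,1]$ whose image lies in a coordinate neighbourhood and differentiate the Noether charge $s\mapsto q(s):=\partial_vL(z,\dot z)[K(z)]=\frac{\partial L}{\partial v^h}(z,\dot z)\,K^h(z)$. Because $K$ is $C^3$ and $z$ is $C^1$, the map $s\mapsto K^h(z(s))$ is $C^1$ with $\frac{d}{ds}K^h(z)=\frac{\partial K^h}{\partial x^i}(z)\dot z^i$, so $q$ is differentiable and the product rule applies. Using the Euler--Lagrange identity for the first factor, I obtain
\[
	\frac{d}{ds}\,q(s)=\frac{\partial L}{\partial x^h}(z,\dot z)\,K^h(z)
	+\frac{\partial L}{\partial v^h}(z,\dot z)\,\frac{\partial K^h}{\partial x^i}(z)\,\dot z^i,
\]
and the right-hand side is exactly $K^c(L)(z,\dot z)$ as written in \eqref{eq:Killing-LocCoord}, which vanishes identically because $K$ is an infinitesimal symmetry of $L$. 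Hence $q'\equiv 0$ on every such subinterval.

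Finally I would globalize: $q$ is a coordinate-free, continuous function on the compact interval $[0,1]$, which can be covered by finitely many coordinate patches, and on each of these $q$ has vanishing derivative; therefore $q$ is constant on $[0,1]$, yielding the conserved value $c$. I do not foresee a genuine obstacle here. The only delicate point is the legitimacy of the termwise differentiation, i.e.\ that $s\mapsto\partial_vL(z,\dot z)$ is actually differentiable with derivative $\partial_xL(z,\dot z)$; this is guaranteed by the regularity of critical points recorded in Remark~\ref{rem:E-constant-on-critical-points}, without which the manipulation would be unjustified for a merely $H^1$ weak solution.
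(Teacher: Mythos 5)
Your proposal is correct and follows essentially the same route as the paper: regularity of the critical point is taken from the cited result (the paper invokes \cite[Proposition A.1]{caponio2023-calcvar} directly, where you route it through Remark~\ref{rem:E-constant-on-critical-points} and the continuity of $\partial_xL(z,\dot z)$), and the conservation law is then obtained by differentiating the Noether charge in local coordinates and cancelling via the Euler--Lagrange equation together with the symmetry identity \eqref{eq:Killing-LocCoord}. The globalization over finitely many charts that you add is implicit in the paper's argument.
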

\begin{proof}
	By~\cite[Proposition A.1]{caponio2023-calcvar},
	both $z$ and $\partial_vL(z,\dot{z})$ are of class $C^1$.
	Therefore, it suffices to prove that, for every $s \in [0,1]$,
	we have
	\[
		\frac{\d}{\d s}\Big(
		\partial_vL(z(s),\dot{z}(s))[K(z(s))]\Big) = 0.
	\]
	Therefore, we can work on a local coordinate system
	$(x^0,\dots,x^{m},v^0,\dots,v^m)$ of $TM$
	and, using~\eqref{eq:Euler-Lagrange} and~\eqref{eq:Killing-LocCoord},
	we obtain the following chain of equalities:
	\begin{align*}
		\lefteqn{\frac{\d  }{\d s}\left(\frac{\partial L}{\partial v^i}\big(z(s),\dot{z}(s)\big)K^i(z(s))\right)}&\\
		&\quad=\frac{\d }{\d s}\left(\frac{\partial L}{\partial v^i}\big(z(s),\dot{z}(s)\big)\right)K^i(z(s))
		+\frac{\partial L}{\partial v^i}\big(z(s),\dot{z}(s)\big)\frac{\partial K^i}{\partial x^h}(z(s))\dot z^h(s)\\
		&\quad=\frac{\partial L}{\partial x^i}\big(z(s),\dot{z}(s)\big)K^i(z(s))
	+\frac{\partial L}{\partial v^i}\big(z(s),\dot{z}(s)\big)\frac{\partial K^i}{\partial x^h}(z(s))\dot z^h(s) =0.
																																				\qedhere
	\end{align*}
\end{proof}
On the basis of Lemma~\ref{lem:constant-Noether-charge},
the curves with a constant Noether charge
are the only ones that can be critical points of the action functional.
The following results ensure that this subset of curves is
indeed a closed manifold of class $C^1$,
allowing for a simplification of the
variational setting by considering only these curves.
A detailed proof can be found in~\cite{caponio2023-calcvar}
and relies on the linearity assumption of the Noether charge.

Let us define the following sets:
\begin{equation*}
	\label{eq:def_Npq}
	\mathcal{N}_{p,r} \coloneqq \{z \in \Omega_{p,r}(M): \exists c \in \mathbb{R} \text{ such that } Q(\dot{z}) = c \text{ a.e. on } [0,1]\} \subset \Omega_{p,r}(M),
\end{equation*}
and
\begin{multline*}
	\label{eq:def-calWz}
	\mathcal{W}_z := \big\{
		\eta \in T_z\Omega_{p,r}(M): \exists \mu\in H^{1}_0([0,1],\mathbb{R})\\
		\text{ such that } \eta(s)=\mu(s)K(z(s)), \text{ a.e. on } [0,1]
	\big\}.
\end{multline*}

Since $L$ is invariant under the one-parameter
group of local $C^1$ diffeomorphisms
generated by $K$, we have the following result.
\begin{proposition}
	\label{prop:N_pq-characterization}
	The space $\mathcal{N}_{p,r}$ is non-empty,
	it is a $C^1$ closed submanifold of $\Omega_{p,r}(M)$ and satisfies
	\begin{equation}
		\label{eq:N_pq-characterization}
		\mathcal{N}_{p,r} = \left\{
			z \in \Omega_{p,r}(M): \d \mathcal{L}_{p,r}(z)[\eta] = 0, \forall \eta \in \mathcal{W}_z
		\right\}.
	\end{equation}
	Moreover, for every $z \in \mathcal{N}_{p,r}$,
	the tangent space of $\mathcal{N}_{p,r}$ at $z$ is given by
	\begin{equation}
		\label{eq:TzNpq}
		T_z\mathcal{N}_{p,r} = \left\{
			\xi\in T_z\Omega_{p,r}(M): \exists c\in \mathbb{R} \text{ such that }
			\partial_xQ(\dot{z})[\xi]+Q(\dot \xi)=c \text{ a.e.}
		\right\},
	\end{equation}
	and
	\begin{equation}
		\label{eq:TzOmega-splitting}
		T_z\Omega_{p,r}(M) = T_z\mathcal{N}_{p,r} +\mathcal{W}_z.
	\end{equation}
\end{proposition}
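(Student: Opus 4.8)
The plan is to realize $\mathcal{N}_{p,r}$ as the zero set of a single $C^1$ submersion $G$ onto a Hilbert space and then to read off the characterization, the tangent space, and the splitting directly from $\d G(z)$. I would begin with the characterization \eqref{eq:N_pq-characterization} by computing $\d\mathcal{L}_{p,r}(z)$ on the vectors of $\mathcal{W}_z$. For $\eta=\mu K(z)$ with $\mu\in H^1_0([0,1],\mathbb{R})$ one has $\dot\eta=\dot\mu\,K(z)+\mu\,\partial_xK(z)\dot z$, so the first-variation formula gives
\begin{equation*}
  \d\mathcal{L}_{p,r}(z)[\eta] = \int_0^1\!\Big(\mu\big(\partial_xL(z,\dot z)[K(z)] + \partial_vL(z,\dot z)[\partial_xK(z)\dot z]\big) + \dot\mu\,\partial_vL(z,\dot z)[K(z)]\Big)\d s .
\end{equation*}
The parenthesis multiplying $\mu$ is exactly $K^c(L)(z,\dot z)$, which vanishes by \eqref{eq:Killing-LocCoord}, while $\partial_vL(z,\dot z)[K(z)]=Q(\dot z)$ by \eqref{eq:noether}; hence $\d\mathcal{L}_{p,r}(z)[\eta]=\int_0^1\dot\mu\,Q(\dot z)\,\d s$. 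Since $\{\dot\mu:\mu\in H^1_0\}$ is precisely the space $L^2_0$ of mean-zero $L^2$ functions, this vanishes for all $\mu$ if and only if $Q(\dot z)$ is orthogonal to $L^2_0$, i.e. $Q(\dot z)$ is a.e. constant, which is \eqref{eq:N_pq-characterization}.

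Next I would introduce $G\colon\Omega_{p,r}(M)\to L^2_0$, $G(z)=Q(\dot z)-\int_0^1 Q(\dot z)\,\d\sigma$, so that $\mathcal{N}_{p,r}=G^{-1}(0)$, which is automatically closed. The map $G$ is $C^1$: in local coordinates $Q(\dot z)=(Q_i\circ z)\,\dot z^i$ is the product of $z\mapsto Q_i\circ z$ (of class $C^1$ from $H^1$ into $C^0$, since $Q_i$ is $C^1$) with the bounded linear operator $z\mapsto\dot z^i$ into $L^2$, via the continuous bilinear product $C^0\times L^2\to L^2$, and subtracting the mean is bounded linear. Here the \emph{linearity} of $Q$ in the velocity is what makes $Q(\dot z)$ depend on $z$ with the clean differential
\begin{equation*}
  \d G(z)[\xi] = \big(\partial_xQ(\dot z)[\xi] + Q(\dot\xi)\big) - \int_0^1\big(\partial_xQ(\dot z)[\xi] + Q(\dot\xi)\big)\d\sigma ,
\end{equation*}
whose kernel consists exactly of those $\xi$ for which $\partial_xQ(\dot z)[\xi]+Q(\dot\xi)$ is a.e. constant, i.e. \eqref{eq:TzNpq}.

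The decisive step is showing that $0$ is a regular value, and I would do this by testing $\d G(z)$ on $\mathcal{W}_z$ itself. Taking $\xi=\mu K(z)$ and using $Q(K)=-1$ from \eqref{QKmeno1} together with the invariance of $Q$ under the flow of $K$ — equivalently the identity $\partial_xQ(\dot z)[K]+Q(\partial_xK(z)\dot z)=0$, which follows by differentiating at $t=0$ the flow-invariance of $\partial_vL$ in \eqref{eq:dvL-psiInvariant} — the coefficient of $\mu$ cancels and one gets $\d G(z)[\mu K(z)]=-\dot\mu$. Since $\mu\mapsto-\dot\mu$ is a Banach isomorphism $H^1_0\to L^2_0$, the restriction $\d G(z)|_{\mathcal{W}_z}\colon\mathcal{W}_z\to L^2_0$ is an isomorphism; in particular $\d G(z)$ is surjective, and $\mathcal{W}_z$ is a closed topological complement of $\ker\d G(z)$ (closed because $\|K(z(s))\|$ is bounded away from zero on the compact set $z([0,1])$, so $\mu\mapsto\mu K(z)$ is bounded below). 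Thus $G$ is a $C^1$ submersion, $\mathcal{N}_{p,r}$ is a closed $C^1$ submanifold with $T_z\mathcal{N}_{p,r}=\ker\d G(z)$ as in \eqref{eq:TzNpq}, and the splitting \eqref{eq:TzOmega-splitting} $T_z\Omega_{p,r}(M)=T_z\mathcal{N}_{p,r}\oplus\mathcal{W}_z$ is immediate (the sum is in fact direct, since $\mu K(z)\in\ker\d G(z)$ forces $\dot\mu=0$, hence $\mu=0$).

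Finally, for non-emptiness I would take any $w\in\Omega_{p,r}(M)$ (which exists since $M$ is connected) and correct it along the flow by setting $z(s)=\psi(\theta(s),w(s))$. Using $\partial_u\psi=K(z)$, the flow-invariance of $Q$, and $Q(K)=-1$ one finds $Q(\dot z)=-\dot\theta+Q(\dot w)$, so the choice $\theta(s)=\int_0^s\big(Q(\dot w)-c\big)\d\sigma$ with $c=\int_0^1 Q(\dot w)\,\d\sigma$ gives $\theta\in H^1_0$, hence $z\in\Omega_{p,r}(M)$, and $Q(\dot z)\equiv c$, so $z\in\mathcal{N}_{p,r}$. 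The main obstacle I anticipate is the functional-analytic bookkeeping — proving that $G$ is genuinely $C^1$ with the stated differential (a superposition-operator regularity point) and that $\mathcal{W}_z$ is a closed topological complement, so that the level set is a bona fide split Banach submanifold; the algebraic cancellations themselves are forced by the symmetry identities \eqref{eq:Killing-LocCoord}, \eqref{eq:noether} and \eqref{QKmeno1}.
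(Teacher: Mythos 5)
Your proof is correct, but it follows a genuinely different route from the paper's: the paper proves none of the four claims in situ and simply quotes \cite[Propositions 4.2, 4.3, 6.4 and Lemma 4.4]{caponio2023-calcvar}, where they are established in the more general affine-charge setting, whereas you give a self-contained argument that leans on the linearity of $Q$ and on the normalization \eqref{QKmeno1}. The substance checks out: the du Bois--Reymond step reducing $\d\mathcal{L}_{p,r}(z)[\mu K(z)]$ to $\int_0^1\dot\mu\,Q(\dot z)\,\d s$ via \eqref{eq:Killing-LocCoord} is right; the identity $\partial_xQ(v)[K]+Q(\partial_xK\,v)=0$ does follow from \eqref{eq:dvL-psiInvariant} with $\xi=K$ together with $\partial_x\psi(t,x)[K(x)]=K(\psi(t,x))$, so for your map $G(z)=Q(\dot z)-\int_0^1Q(\dot z)\,\d\sigma$ one indeed gets $\d G(z)[\mu K(z)]=-\dot\mu$, an isomorphism of $\mathcal{W}_z$ onto the mean-zero $L^2$ functions, which yields the submersion, \eqref{eq:TzNpq} as the kernel, and the splitting; and the reparametrization $z(s)=\psi(\theta(s),w(s))$ for non-emptiness is the same flow trick used in the cited reference. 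Two remarks. First, your computation proves more than \eqref{eq:TzOmega-splitting}: since $-\dot\mu=c$ with $\mu\in H^1_0([0,1],\mathbb{R})$ forces $\mu\equiv0$, the sum is direct in the present linear case with $Q(K)\equiv-1$; the paper deliberately states only a sum and its footnote corrects ``direct sum'' to ``sum'' in the cited lemma, so you should flag explicitly that the directness is a feature of this special setting rather than assert it silently. Second, the points you defer as bookkeeping are indeed the only ones requiring care --- the $C^1$ regularity of $G$ (a Nemytskii operator $H^1\hookrightarrow C^0\to C^0$ paired with the bounded linear map $z\mapsto\dot z$ into $L^2$) and the closedness of $\mathcal{W}_z$ (from $\norm{K}$ bounded away from zero on the compact set $z([0,1])$ together with $\norm{\mu\,K(z)'}_{L^2}\le\norm{\mu}_{C^0}\norm{K(z)'}_{L^2}$) --- and since $T_z\Omega_{p,r}(M)$ is a Hilbert space the kernel splits automatically, so the regular value theorem applies as soon as surjectivity of $\d G(z)$ on $\mathcal{W}_z$ is established. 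What your approach buys is a transparent, reference-free proof and the sharper direct-sum conclusion; what the paper's route buys is uniformity with the affine case treated in Appendix~\ref{sec:affineNoether}.
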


\begin{proof}
	The fact that $\mathcal N_{p,r}\neq \emptyset$, for all $p, r\in M$,
	follows from \cite[proposition 6.4]{caponio2023-calcvar}.
	Equality~\eqref{eq:N_pq-characterization} is proved
	in~\cite[Proposition 4.2]{caponio2023-calcvar},
	and \eqref{eq:TzNpq} is a particular case
	of~\cite[Proposition 4.3]{caponio2023-calcvar}.
	Finally, \eqref{eq:TzOmega-splitting} comes 
	from \cite[Lemma 4.4]{caponio2023-calcvar}
	\footnote{
		We would like to draw attention to a misprint
		in~\cite[Lemma 4.4]{caponio2023-calcvar},
		where we note that the ``direct sum''
		should be corrected to ``sum'' as it appears there.
	}. 
\end{proof}

The above result gives the following variational principle
for the critical points of $\mathcal{L}_{p,r}$,
which extends a result by F. Giannoni and P. Piccione
(see \cite{giannoni1999}).

\begin{proposition}
	\label{prop:restriction-Npq}
	Let $\mathcal{J}_{p,r}\colon \mathcal{N}_{p,r} \to \mathbb{R}$
	be the restriction of $\mathcal{L}_{p,r}$ to $\mathcal{N}_{p,r}$.
	Then, $z$ is a critical point for $\mathcal{L}_{p,r}$
	if and only if $z \in \mathcal{N}_{p,r}$ and $z$
	is a critical point for $\mathcal{J}_{p,r}$.
\end{proposition}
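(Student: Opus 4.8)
The plan is to leverage the two structural facts already established in Proposition~\ref{prop:N_pq-characterization}: the variational characterization \eqref{eq:N_pq-characterization} of $\mathcal{N}_{p,r}$, which says that $z\in\mathcal{N}_{p,r}$ precisely when $\d\mathcal{L}_{p,r}(z)$ annihilates every $\eta\in\mathcal{W}_z$, together with the tangent-space decomposition \eqref{eq:TzOmega-splitting}, which expresses any tangent vector to $\Omega_{p,r}(M)$ as a sum of a vector tangent to $\mathcal{N}_{p,r}$ and one lying in $\mathcal{W}_z$. Once these are in hand the argument is essentially formal, so I do not expect a genuine obstacle; the subtlest point is the identification of $\d\mathcal{J}_{p,r}(z)$ with the restriction of $\d\mathcal{L}_{p,r}(z)$ to $T_z\mathcal{N}_{p,r}$, which is the standard behaviour of differentials under restriction to a $C^1$ submanifold of the Hilbert manifold $\Omega_{p,r}(M)$.

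For the forward implication I would start from a critical point $z$ of $\mathcal{L}_{p,r}$. Then $\d\mathcal{L}_{p,r}(z)$ vanishes on the whole ambient tangent space $T_z\Omega_{p,r}(M)$, in particular on $\mathcal{W}_z$, so the characterization \eqref{eq:N_pq-characterization} places $z$ in $\mathcal{N}_{p,r}$ (equivalently, one may invoke Lemma~\ref{lem:constant-Noether-charge} to see that its Noether charge is constant). Since $\mathcal{J}_{p,r}$ is merely the restriction of $\mathcal{L}_{p,r}$ to $\mathcal{N}_{p,r}$, its differential at $z$ is the restriction of $\d\mathcal{L}_{p,r}(z)$ to the subspace $T_z\mathcal{N}_{p,r}$; as the latter already vanishes on all of $T_z\Omega_{p,r}(M)$, it vanishes a fortiori on $T_z\mathcal{N}_{p,r}$, and $z$ is a critical point of $\mathcal{J}_{p,r}$.

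For the converse I would take $z\in\mathcal{N}_{p,r}$ that is critical for $\mathcal{J}_{p,r}$ and show that $\d\mathcal{L}_{p,r}(z)[\eta]=0$ for every $\eta\in T_z\Omega_{p,r}(M)$. Using \eqref{eq:TzOmega-splitting} I would write $\eta=\xi+w$ with $\xi\in T_z\mathcal{N}_{p,r}$ and $w\in\mathcal{W}_z$, and evaluate $\d\mathcal{L}_{p,r}(z)$ separately on the two summands. On $\xi$ the differential vanishes because $\d\mathcal{L}_{p,r}(z)|_{T_z\mathcal{N}_{p,r}}=\d\mathcal{J}_{p,r}(z)=0$ by criticality on the submanifold; on $w$ it vanishes because $z\in\mathcal{N}_{p,r}$, which through \eqref{eq:N_pq-characterization} is exactly the assertion that $\d\mathcal{L}_{p,r}(z)$ annihilates every element of $\mathcal{W}_z$. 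By linearity the two contributions add to zero, yielding $\d\mathcal{L}_{p,r}(z)=0$, so $z$ is a critical point of $\mathcal{L}_{p,r}$.

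Finally, a remark on robustness: the decomposition in \eqref{eq:TzOmega-splitting} is a sum and not necessarily a direct sum (cf.\ the footnote to Proposition~\ref{prop:N_pq-characterization}), so the splitting $\eta=\xi+w$ need not be unique. This is harmless for the argument, since the conclusion only uses that \emph{some} such decomposition exists together with the fact that $\d\mathcal{L}_{p,r}(z)$ annihilates both $T_z\mathcal{N}_{p,r}$ and $\mathcal{W}_z$, independently of which representative is chosen.
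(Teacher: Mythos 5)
Your proof is correct. The paper itself does not spell out an argument here---it simply defers to Theorem~4.7 of \cite{caponio2023-calcvar}---but the deduction you give (forward direction via the vanishing of $\d\mathcal{L}_{p,r}(z)$ on $\mathcal{W}_z$ and the characterization \eqref{eq:N_pq-characterization}; converse via the decomposition \eqref{eq:TzOmega-splitting} and linearity) is exactly the standard argument that these two facts from Proposition~\ref{prop:N_pq-characterization} are set up to enable, and your closing observation that the non-uniqueness of the splitting is harmless is apt.
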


\begin{proof}
	See \cite[Theorem 4.7]{caponio2023-calcvar}.
\end{proof}

\section{The variational structure of the action in relation with the flow of \texorpdfstring{$K$}{K}}
\label{sec:flowK}
In this section, we consider the flow of the complete vector field $K$ and its
relationship with the variational structure of the action.
More precisely, let $\psi\colon \mathbb{R} \times M \to M$ denote the flow generated by the vector field $K$.
Given a flow line $\gamma\colon \mathbb{R} \to M$ of $K$,
there exists a point $q \in M$ such that $\gamma(t) = \psi(t,q)$.

Our goal is to prove that for each $t \in \mathbb{R}$, there is a
diffeomorphism between $\mathcal{N}_{p,q}$ and $\mathcal{N}_{p,\gamma(t)}$.
This enables us to define a functional (see \eqref{eq:hk}) on
$\mathcal{N}_{p,q}\times \mathbb{R}$
and obtain an alternative equation for
solutions of the Euler-Lagrange equations connecting $p$ and $\gamma$
(see \eqref{eq:EL-hkappa}).
Furthermore, recalling that we seek the solutions of Euler-Lagrange equations with a fixed energy $\kappa$,
we show that for any $z\in \mathcal{N}_{p,q}$, there are two values of $t\in \mathbb{R}$ such that $\mathcal{E}(z^t) = \kappa$, where  $\kappa$ satisfies  \eqref{eq:condition-on-kappa} and  $z^t\in\mathcal{N}_{p,\gamma(t)}$ is the curve corresponding to $z$ via the diffeomorphism.
Therefore, we can simplify the problem and study
a couple of functionals defined only on $\mathcal{N}_{p,q}$
(see \eqref{eq:def_t+-}).

Let us define the map
$F^t\colon \Omega_{p,q}(M) \to \Omega_{p,\gamma(t)}(M)$ as follows:
\begin{equation}
	\label{eq:def-Ft}
	\big(F^t(z)\big)(s) \coloneqq \psi(ts,z(s)).
\end{equation}
To simplify the notation, we write
\[
	z^t = F^t(z)
\]
for any $z \in \Omega_{p,q}(M)$.
\begin{proposition}
	\label{prop:N_pq-Npgammat}
	The map $F^t$ is a diffeomorphism with its inverse being $F^{-t}$.
	Furthermore, $F^t|_{\mathcal{N}_{p,q}}$
	is a diffeomorphism from $\mathcal{N}_{p,q}$ to $\mathcal{N}_{p,\gamma(t)}$.
	Therefore, for every $z \in \Omega_{p,q}(M)$,
	we have the following equivalences:
	\begin{equation}
		\label{eq:dFt-Npq}
		\d F^t(z)[\xi] \in T_{z^t}\mathcal{N}_{p,\gamma(t)}
		\quad\text{if and only if}
		\quad \xi \in T_z\mathcal{N}_{p,q},
	\end{equation}
	and
	\begin{equation}
		\label{eq:dFt-Wz}
		\d F^t(z)[\eta] \in \mathcal{W}_{z^t}
		\quad\text{if and only if}
		\quad \eta \in \mathcal{W}_{z}.
	\end{equation}
\end{proposition}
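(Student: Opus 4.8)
The plan is to prove the three assertions in turn, with the whole argument resting on two invariance identities: that $K$ is carried to itself by its own flow, $\partial_x\psi(t,x)[K(x)] = K(\psi(t,x))$, and that the one-form $Q$ is flow-invariant, $Q\big(\partial_x\psi(t,x)[v]\big) = Q(v)$. The latter follows from \eqref{eq:dvL-psiInvariant} evaluated at $\xi = K(x)$ together with the former, since $\partial_v L\big(\psi(t,x),\partial_x\psi(t,x)[v]\big)\big[K(\psi(t,x))\big] = Q\big(\partial_x\psi(t,x)[v]\big)$ by the definition \eqref{eq:noether} of $Q$. For the first assertion I would first check that $F^t$ maps $\Omega_{p,q}(M)$ into $\Omega_{p,\gamma(t)}(M)$: from \eqref{eq:def-Ft} one has $F^t(z)(0) = \psi(0,p) = p$ and $F^t(z)(1) = \psi(t,q) = \gamma(t)$, while $F^t(z) \in H^1$ because $\psi$ is $C^3$ and $H^1([0,1],M)$ embeds into $C^0$. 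The one-parameter group law gives $\psi\big(-ts,\psi(ts,x)\big) = x$ pointwise, hence $F^{-t}\circ F^t = \mathrm{id}$ and, symmetrically, $F^t \circ F^{-t} = \mathrm{id}$.

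For the second assertion I would differentiate $z^t(s) = \psi(ts,z(s))$, obtaining
\begin{equation*}
	\dot z^t(s) = t\,\partial_u\psi(ts,z(s)) + \partial_x\psi(ts,z(s))[\dot z(s)]
	= t\,K(z^t(s)) + \partial_x\psi(ts,z(s))[\dot z(s)],
\end{equation*}
where I used $\partial_u\psi(t,x) = K(\psi(t,x))$. Applying $Q$ and invoking the two invariances above together with \eqref{QKmeno1} yields
\begin{equation*}
	Q(\dot z^t(s)) = t\,Q\big(K(z^t(s))\big) + Q\big(\partial_x\psi(ts,z(s))[\dot z(s)]\big)
	= -t + Q(\dot z(s)).
\end{equation*}
Thus if $Q(\dot z) = c$ a.e. then $Q(\dot z^t) = c - t$ a.e., so $F^t$ maps $\mathcal{N}_{p,q}$ into $\mathcal{N}_{p,\gamma(t)}$; running the same computation with $-t$ shows $F^{-t}$ maps $\mathcal{N}_{p,\gamma(t)}$ into $\mathcal{N}_{p,q}$, and combined with the first assertion this makes $F^t|_{\mathcal{N}_{p,q}}$ a diffeomorphism onto $\mathcal{N}_{p,\gamma(t)}$.

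For the tangent equivalences I would differentiate $F^t$ in the curve variable to get $\d F^t(z)[\xi](s) = \partial_x\psi(ts,z(s))[\xi(s)]$; since each $\partial_x\psi(ts,z(s))$ is a linear isomorphism of tangent spaces, $\d F^t(z)$ is an isomorphism $T_z\Omega_{p,q}(M)\to T_{z^t}\Omega_{p,\gamma(t)}(M)$. For \eqref{eq:dFt-Wz}, if $\eta(s) = \mu(s)K(z(s))$ with $\mu\in H^1_0([0,1],\mathbb{R})$, then $\d F^t(z)[\eta](s) = \mu(s)\,\partial_x\psi(ts,z(s))[K(z(s))] = \mu(s)K(z^t(s)) \in \mathcal{W}_{z^t}$, and the converse follows by inverting $\partial_x\psi(ts,z(s))$ (or by applying the same statement to $F^{-t}$). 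For \eqref{eq:dFt-Npq} I would use that, by the second assertion, $F^t$ restricts to a diffeomorphism $\mathcal{N}_{p,q}\to\mathcal{N}_{p,\gamma(t)}$, so $\d F^t(z)$ carries $T_z\mathcal{N}_{p,q}$ isomorphically onto $T_{z^t}\mathcal{N}_{p,\gamma(t)}$; together with the injectivity of $\d F^t(z)$ on all of $T_z\Omega_{p,q}(M)$ this gives the stated equivalence. Alternatively \eqref{eq:dFt-Npq} can be verified directly against the characterization \eqref{eq:TzNpq} by the same invariance computation used for the second assertion.

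The main obstacle is not the algebra, which is driven entirely by the two invariance identities, but the infinite-dimensional regularity in the first assertion: showing that the composition operator $z\mapsto\psi(t\,\cdot\,,z(\cdot))$ is a genuine $C^1$ diffeomorphism of Hilbert manifolds, rather than merely a bijection. This is exactly where the $C^3$ hypothesis on $K$ is spent, since differentiating a Nemytskii operator on $H^1$ costs regularity and one needs the flow $\psi$ to be smooth enough in the base variable for the derivative $\xi\mapsto\partial_x\psi(t\,\cdot\,,z(\cdot))[\xi(\cdot)]$ to depend continuously on $z$.
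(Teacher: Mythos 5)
Your proposal is correct and follows essentially the same route as the paper: the identities $\partial_x\psi(t,x)[K(x)]=K(\psi(t,x))$ and $Q\big(\partial_x\psi(t,x)[v]\big)=Q(v)$ (the latter from \eqref{eq:dvL-psiInvariant} and \eqref{eq:noether}) give $Q(\dot z^t)=Q(\dot z)-t$, whence the restriction to $\mathcal N_{p,q}$ and both tangent-space equivalences. The one point you flag but do not settle --- that $z\mapsto\psi(t\,\cdot,z(\cdot))$ is a genuine $C^1$ diffeomorphism of the Hilbert manifolds --- is exactly what the paper disposes of by citing Palais' result together with the $C^3$ regularity of the flow, so you have correctly identified where that hypothesis is spent.
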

\begin{proof}
	By utilizing a result by R. Palais~\cite{palais1963-Topology}
	and considering that the flow of $K$ is $C^3$,
	we can conclude that $F^t$ is a diffeomorphism
	(cf.~\cite[Proposition 2.2]{caponio2002-diffgeo}).
	Recalling that $\partial_u\psi$ is the differential of $\psi$
	with respect to the first variable,
	we can derive the following equalities:
	\begin{equation*}
		\label{eq:partial_t_psi}
		\partial_u \psi(ts,z(s))[1] = K(\psi(ts,z(s))),
	\end{equation*}
	and 
	\begin{equation}
		\label{eq:partial_x_psi}
		\partial_x\psi(ts,z(s))[K(z(s))] = K(\psi(ts,z(s))).
	\end{equation}
	Consequently, we obtain the velocity of $z^t$ as:
	\begin{equation}
		\label{eq:zt-velocity}
		\frac{\d}{\d s}z^t(s) = \dot{z}^t(s)
		= \partial_u\psi(ts,z(s))[t] + \partial_x\psi(ts,z(s))[\dot{z}(s)].
	\end{equation}
	Now, considering that $Q(K) \equiv -1$, we deduce:
	\begin{equation*}
		\label{eq:dvLzt1}
		\partial_vL(z^t,\dot{z}^t)[K(z^t)] = Q(\dot{z}^t)
		= -t + Q\left(\partial_x \psi(ts,z(s))[\dot{z}(s)]\right).
	\end{equation*}
	Hence, from \eqref{eq:dvL-psiInvariant}, we have:
	\begin{equation}
		\label{eq:dvLzt2}
		Q(\dot{z}^t) = Q(\dot{z}) - t,
	\end{equation}
	which implies that $z^t \in \mathcal{N}_{p,\gamma(t)}$
	if and only if $z \in \mathcal{N}_{p,q}$.
	Therefore, this implies \eqref{eq:dFt-Npq}.
	Finally, \eqref{eq:dFt-Wz} follows from
	$\d F^t(z)[\nu]=\partial_x\psi(ts,z(s))[\nu(s)]$
	and~\eqref{eq:partial_x_psi}.
\end{proof}

We introduce the functional
$\mathcal{H}_{p,q}\colon \Omega_{p,q}(M)\times \mathbb{R} \to \mathbb{R}$
defined as follows:
\begin{equation}
	\label{eq:def-hk}
	\mathcal{H}_{p,q}(z,t) \coloneqq \mathcal{L}_{p,\gamma(t)}(F^t(z)).
\end{equation}
Using \eqref{eq:zt-velocity} and observing that
$\partial_u\psi(ts,z(s))[t]=t\partial_x\psi(ts,z(s))[K(z(s)]$,
we can deduce the expression:
\begin{equation}
	\label{eq:dz^t}
	\dot{z}^t = \partial_x\psi(ts,z(s))\big[\dot{z} + tK(z(s))\big],
\end{equation}
so that, by applying also \eqref{eq:L-psiInvariant},
we can rewrite $\mathcal{H}_{p,q}(z,t)$ as
\begin{equation}
	\label{eq:hk}
	\mathcal{H}_{p,q}(z,t) = \int_{0}^{1}L\big(z,\dot{z}+ t K(z)\big)\d s.
\end{equation}

Considering that $F^t|_{\mathcal{N}_{p,q}}$ is a diffeomorphism,
we obtain the following result,
which allows us to focus our study on critical curves of
$\mathcal{H}_{p,q}$ within $\mathcal{N}_{p,q}$.
\begin{proposition}
	\label{prop:restriction-Hpq}
	For $(z,t) \in \Omega_{p,q} (M)\times \mathbb{R}$, the following statements hold:
	\begin{equation}
		\label{eq:partialz-H-0}
		\partial_z\mathcal{H}_{p,q}(z,t)[\xi] = 0,
		\qquad \forall \xi \in T_z\Omega_{p,q}(M),
	\end{equation}
	if and only if $z \in \mathcal{N}_{p,q}$ and
	\begin{equation}
		\label{eq:partialz-H-1}
		\partial_z\mathcal{H}_{p,q}(z,t)[\xi] = 0,
		\qquad \forall \xi \in T_z\mathcal{N}_{p,q}.
	\end{equation}
\end{proposition}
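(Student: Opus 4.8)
The plan is to reduce everything to the factorization $\mathcal{H}_{p,q}(z,t)=\mathcal{L}_{p,\gamma(t)}(F^t(z))$ recorded in~\eqref{eq:def-hk}. Differentiating it through the diffeomorphism $F^t$, I would first write, for every $\xi\in T_z\Omega_{p,q}(M)$,
\begin{equation*}
	\partial_z\mathcal{H}_{p,q}(z,t)[\xi]=\d\mathcal{L}_{p,\gamma(t)}(z^t)\big[\d F^t(z)[\xi]\big].
\end{equation*}
Since $F^t$ is a diffeomorphism, $\d F^t(z)$ is a linear isomorphism of $T_z\Omega_{p,q}(M)$ onto $T_{z^t}\Omega_{p,\gamma(t)}(M)$, so $\partial_z\mathcal{H}_{p,q}(z,t)$ vanishes on a subspace precisely when $\d\mathcal{L}_{p,\gamma(t)}(z^t)$ vanishes on its image. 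I would then split an arbitrary tangent vector according to~\eqref{eq:TzOmega-splitting} as $\xi=\xi_N+\eta$ with $\xi_N\in T_z\mathcal{N}_{p,q}$ and $\eta\in\mathcal{W}_z$, and analyze the two summands separately.

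The core step, which I expect to carry the proof, is to establish that the vanishing of $\partial_z\mathcal{H}_{p,q}(z,t)$ on $\mathcal{W}_z$ is equivalent to the membership $z\in\mathcal{N}_{p,q}$. By~\eqref{eq:dFt-Wz} the isomorphism $\d F^t(z)$ carries $\mathcal{W}_z$ bijectively onto $\mathcal{W}_{z^t}$; feeding this into the displayed chain rule gives
\[
	\partial_z\mathcal{H}_{p,q}(z,t)[\eta]=0\ \ \forall\,\eta\in\mathcal{W}_z
	\quad\Longleftrightarrow\quad
	\d\mathcal{L}_{p,\gamma(t)}(z^t)[\eta']=0\ \ \forall\,\eta'\in\mathcal{W}_{z^t}.
\]
By the characterization~\eqref{eq:N_pq-characterization} with $r=\gamma(t)$, the right-hand condition says exactly $z^t\in\mathcal{N}_{p,\gamma(t)}$, and Proposition~\ref{prop:N_pq-Npgammat} identifies this with $z\in\mathcal{N}_{p,q}$.

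With this equivalence in place, both implications follow by bookkeeping. If~\eqref{eq:partialz-H-0} holds, restricting to $\eta\in\mathcal{W}_z$ forces $z\in\mathcal{N}_{p,q}$, after which restricting to $\xi\in T_z\mathcal{N}_{p,q}\subseteq T_z\Omega_{p,q}(M)$ yields~\eqref{eq:partialz-H-1}. Conversely, assuming $z\in\mathcal{N}_{p,q}$ and~\eqref{eq:partialz-H-1}, the equivalence again gives vanishing on $\mathcal{W}_z$, and then decomposing an arbitrary $\xi=\xi_N+\eta$ via~\eqref{eq:TzOmega-splitting} and using linearity of $\partial_z\mathcal{H}_{p,q}(z,t)$ produces~\eqref{eq:partialz-H-0}. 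The main obstacle is entirely concentrated in the middle paragraph: everything hinges on the compatibility of $\d F^t$ with the distribution $\mathcal{W}$ from~\eqref{eq:dFt-Wz} and on the Fermat-type characterization~\eqref{eq:N_pq-characterization}, so once those are invoked the remainder is a routine use of the splitting~\eqref{eq:TzOmega-splitting}.
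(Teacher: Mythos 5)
Your proposal is correct and follows essentially the same route as the paper: both rest on the chain rule through the diffeomorphism $F^t$, the compatibility \eqref{eq:dFt-Wz} of $\d F^t$ with $\mathcal{W}$, the characterization \eqref{eq:N_pq-characterization}, and the splitting \eqref{eq:TzOmega-splitting}. The only difference is organizational — you state the key equivalence (vanishing on $\mathcal{W}_z$ iff $z\in\mathcal{N}_{p,q}$) directly, whereas the paper proves the forward implication via Proposition~\ref{prop:restriction-Npq} and the reverse one by contradiction — but the mathematical content is identical.
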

\begin{proof}
	If \eqref{eq:partialz-H-0} holds, we can use \eqref{eq:def-hk}
	and Proposition \ref{prop:N_pq-Npgammat}
	to conclude that $z^t=F^t(z)$ is a critical point of $\mathcal{L}_{p,\gamma(t)}$,
	and by Proposition \ref{prop:restriction-Npq},
	$z^t$ belongs to $\mathcal N_{p,\gamma(t)}$.
	Consequently, we have $z=F^{-t}(z^t)\in \mathcal N_{p,q}$,
	and \eqref{eq:partialz-H-1} trivially follows from \eqref{eq:partialz-H-0}.

	For the other implication, we need to show that if $z \in \mathcal{N}_{p,q}$, then
	\begin{equation}
		\label{eq:partialz-H-2}
		\partial_z\mathcal{H}_{p,q}(z,t)[\eta] = 0,
		\qquad \forall \eta \in \mathcal{W}_{z}.
	\end{equation}
	By contradiction,
	let's assume that $z\in \mathcal{N}_{p,q}$ and \eqref{eq:partialz-H-2} does not hold.
	According to the definition of $\mathcal{H}_{p,q}$,
	there exists $\eta \in\mathcal{W}_z$ such that
	\begin{equation*}
		\label{eq:partialz-H-3}
		\partial_z\mathcal{H}_{p,q}(z,t)[\eta]
		= \d\mathcal{L}_{p,\gamma(t)}(F^t(z))\big[ \d F^t(z)[\eta]\big]
		\ne 0.
	\end{equation*}
	Using \eqref{eq:dFt-Wz}, we know that $\d F^t(z)[\eta] \in \mathcal{W}_{z^t}$.
	Applying Proposition~\ref{prop:N_pq-characterization},
	we can conclude that $F^t(z) \notin \mathcal{N}_{p,\gamma(t)}$,
	which contradicts Proposition~\ref{prop:N_pq-Npgammat}.
\end{proof}

\begin{corollary}
	\label{cor:partial_zH}
	If $(z,t)$ satisfies~\eqref{eq:partialz-H-0},
	then $z^t$ is a critical point for $\mathcal{L}_{p,\gamma(t)}$.
	The following Euler-Lagrange equations $($in local coordinates$)$ hold:
	\begin{multline}
		\label{eq:EL-hkappa}
		\frac{\partial L}{\partial x^i}\big(z,\dot{z} + tK(z)\big)
		-\frac{\d}{\d s}\frac{\partial L}{\partial v^i}\big(z,\dot{z}+ tK(z)\big)\\
		+ t\frac{\partial L}{\partial v^j}\big(z,\dot{z} + tK(z)\big)\frac{\partial K^j}{\partial x^i}(z)
		= 0,
		\quad \forall s \in [0,1],
	\end{multline}
	and there exists $\kappa\in \mathbb{R}$ such that
	\begin{equation}
		\label{eq:energylaw-hkappa}
		E\big(z,\dot{z} + tK(z)\big) = \kappa,
		\quad \forall s \in [0,1].
	\end{equation}
\end{corollary}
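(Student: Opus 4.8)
The plan is to split the statement into its three assertions and dispatch each with the machinery already in place. The first assertion, that $z^t$ is a critical point of $\mathcal{L}_{p,\gamma(t)}$, is essentially the observation made inside the proof of Proposition~\ref{prop:restriction-Hpq}: from \eqref{eq:def-hk} and the chain rule one has $\partial_z\mathcal{H}_{p,q}(z,t)[\xi]=\d\mathcal{L}_{p,\gamma(t)}(z^t)\big[\d F^t(z)[\xi]\big]$, and since $F^t$ is a diffeomorphism by Proposition~\ref{prop:N_pq-Npgammat}, its differential $\d F^t(z)$ is a linear isomorphism of $T_z\Omega_{p,q}(M)$ onto $T_{z^t}\Omega_{p,\gamma(t)}(M)$. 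Hence \eqref{eq:partialz-H-0} forces $\d\mathcal{L}_{p,\gamma(t)}(z^t)$ to annihilate all of $T_{z^t}\Omega_{p,\gamma(t)}(M)$, i.e.\ $z^t$ is a critical point of $\mathcal{L}_{p,\gamma(t)}$. Invoking Remark~\ref{rem:E-constant-on-critical-points} for the curve $z^t$, I then obtain at once that $z^t$ is $C^1$, that it solves the Euler--Lagrange equations \eqref{eq:Euler-Lagrange}, and that there is a constant $\kappa$ with $E(z^t,\dot z^t)=\kappa$ on $[0,1]$; the $C^1$-regularity also transfers to $z=F^{-t}(z^t)$ through the $C^3$ flow.

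For the equation \eqref{eq:EL-hkappa} I would not translate the Euler--Lagrange equation of $z^t$ coordinate by coordinate, but instead compute the first variation of $\mathcal{H}_{p,q}(\cdot,t)$ directly from its explicit integral form \eqref{eq:hk}. Differentiating $\int_0^1 L\big(z,\dot z+tK(z)\big)\,\d s$ along a variation with infinitesimal generator $\xi\in T_z\Omega_{p,q}(M)$, the chain rule in local coordinates produces the three terms $\tfrac{\partial L}{\partial x^i}\xi^i$, $\tfrac{\partial L}{\partial v^i}\dot\xi^i$ and $t\,\tfrac{\partial L}{\partial v^i}\tfrac{\partial K^i}{\partial x^h}\xi^h$, all evaluated at $(z,\dot z+tK(z))$. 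Since every $\xi\in T_z\Omega_{p,q}(M)$ vanishes at the endpoints, an integration by parts on the middle term leaves no boundary contribution, and collecting the coefficients of $\xi^i$ reproduces exactly the bracket appearing in \eqref{eq:EL-hkappa}. Because \eqref{eq:partialz-H-0} makes this integral vanish for every such $\xi$, the fundamental lemma of the calculus of variations — legitimate thanks to the $C^1$-regularity just secured — yields \eqref{eq:EL-hkappa} pointwise. Equivalently, one recognizes \eqref{eq:EL-hkappa} as the ordinary Euler--Lagrange equation of the autonomous Lagrangian $\widetilde L(x,v)\coloneqq L\big(x,v+tK(x)\big)$, whose action is precisely $\mathcal{H}_{p,q}(\cdot,t)$.

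The energy law \eqref{eq:energylaw-hkappa} I would deduce from the invariance of the energy function $E$ under the flow of $K$. Combining \eqref{eq:L-psiInvariant} with \eqref{eq:dvL-psiInvariant} evaluated at $\xi=v$ shows that $E\big(\psi(\sigma,x),\partial_x\psi(\sigma,x)[v]\big)=E(x,v)$ for all $\sigma$, since both $\partial_vL[v]$ and $L$ are preserved by $\psi$. Specializing to $\sigma=ts$, $x=z(s)$ and $v=\dot z(s)+tK(z(s))$, and using the expression \eqref{eq:dz^t} for $\dot z^t$, this identity reads $E\big(z^t(s),\dot z^t(s)\big)=E\big(z(s),\dot z(s)+tK(z(s))\big)$. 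As the left-hand side equals the constant $\kappa$ by the first paragraph, \eqref{eq:energylaw-hkappa} follows immediately.

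The only genuinely delicate point is the flow-invariance of $E$ used in the last step, which rests on the correct transformation rule \eqref{eq:dvL-psiInvariant} for $\partial_vL$ under $\psi$ (ultimately a consequence of the symmetry hypothesis via \eqref{eq:noether}); everything else is a routine first-variation computation together with the already-established diffeomorphism property of $F^t$. A secondary technical care is the regularity required to pass from the weak integral Euler--Lagrange identity to the pointwise equation \eqref{eq:EL-hkappa}, but this is supplied directly by Remark~\ref{rem:E-constant-on-critical-points}.
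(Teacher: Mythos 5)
Your proposal is correct and follows essentially the same route as the paper: the paper likewise obtains \eqref{eq:EL-hkappa} from the explicit form \eqref{eq:hk} of $\mathcal{H}_{p,q}$ (via the du Bois--Reymond lemma rather than your regularity-first integration by parts, a cosmetic difference), deduces that $z^t$ is a critical point of $\mathcal{L}_{p,\gamma(t)}$ and applies Remark~\ref{rem:E-constant-on-critical-points}, and then transfers the constancy of $E(z^t,\dot z^t)$ to \eqref{eq:energylaw-hkappa} by combining \eqref{eq:L-psiInvariant}, \eqref{eq:dvL-psiInvariant} and \eqref{eq:dz^t}, exactly as you do.
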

\begin{proof}
	According to Proposition \ref{prop:restriction-Hpq},
	if \eqref{eq:partialz-H-0} holds,
	then \eqref{eq:EL-hkappa} is an immediate consequence of \eqref{eq:hk}
	and the du Bois-Reymond lemma.
	By \eqref{eq:def-hk}, $z^t=F^t(z)$ is a critical point
	of $\mathcal{L}$ on $\Omega_{p,\gamma(t)}(M)$.
	Hence, using Remark \ref{rem:E-constant-on-critical-points},
	we can conclude that there exists a constant $\kappa$ such that $E(z^t, \dot z^t) = \kappa$.
	Combining
	\eqref{eq:L-psiInvariant}, \eqref{eq:dvL-psiInvariant}, and \eqref{eq:dz^t},
	we obtain \eqref{eq:energylaw-hkappa}.
\end{proof}
\begin{proposition}
	\label{prop:LandEofzt}
	For every $(x,v) \in TM$ and every $t \in \mathbb{R}$,
	the following two equations hold:
	\begin{equation}
		\label{eq:L-zt}
		L\big(x,v + tK(x)\big) = L\big(x,v\big) + t Q(v) - \frac{1}{2}t^2,
	\end{equation}
	and
	\begin{equation}
		\label{eq:E-zt}
		E\big(x,v + tK(x)\big) = E\big(x,v\big) + t Q(v) - \frac{1}{2}t^2.
	\end{equation}
	As a consequence, for every $(x,v)\in TM$, we have
	\begin{equation}
		\label{eq:diff-LandE}
		L\big(x,v + tK(x)\big) - E\big(x,v + tK(x)\big) = L\big(x,v\big) - E\big(x,v\big).
	\end{equation}
\end{proposition}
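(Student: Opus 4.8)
The plan is to establish \eqref{eq:L-zt} first by a one–variable integration, then to deduce \eqref{eq:E-zt} from it by differentiating the resulting identity in the velocity, and finally to read off \eqref{eq:diff-LandE} as an immediate consequence of the first two.

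For \eqref{eq:L-zt}, I would fix $(x,v)\in TM$ and set $f(t):=L\big(x,v+tK(x)\big)$. Since $L$ is $C^1$, the chain rule gives $f'(t)=\partial_vL\big(x,v+tK(x)\big)[K(x)]$, which by the Noether--charge identity \eqref{eq:noether} equals $Q\big(v+tK(x)\big)$. As $Q$ is a one--form, hence linear on $T_xM$, and $Q(K)=-1$ by \eqref{QKmeno1}, this reduces to $f'(t)=Q(v)-t$. Integrating from $0$ to $t$ with $f(0)=L(x,v)$ then yields \eqref{eq:L-zt}.

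For \eqref{eq:E-zt}, I would expand $E\big(x,v+tK\big)=\partial_vL\big(x,v+tK\big)[v+tK]-L\big(x,v+tK\big)$ and treat the two summands separately. The term $L\big(x,v+tK\big)$ is already supplied by \eqref{eq:L-zt}. Splitting the first term as $\partial_vL\big(x,v+tK\big)[v]+t\,\partial_vL\big(x,v+tK\big)[K]$, the $K$--component is again $t\,Q\big(v+tK\big)=tQ(v)-t^2$ by \eqref{eq:noether}, \eqref{QKmeno1} and linearity of $Q$. The only genuinely new ingredient is the $v$--component $\partial_vL\big(x,v+tK\big)[v]$.

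Here lies the main obstacle: since $L$ is assumed merely $C^1$, I cannot differentiate $\partial_vL$ a second time and invoke a vertical–Hessian relation of the type $\partial_{vv}L[\,\cdot\,,K]=Q(\cdot)$, as is done in \eqref{eq:Hessian_LF} under the stronger $C^2$ hypothesis of the Lorentz--Finsler setting. To circumvent this, I would exploit that \eqref{eq:L-zt} is an identity valid for \emph{every} $(x,v)\in TM$: replacing $v$ by $v+s\xi$ for an arbitrary $\xi\in T_xM$ and differentiating at $s=0$, and using the linearity of $Q$ on the right-hand side, produces the first–order identity
\[
	\partial_vL\big(x,v+tK(x)\big)[\xi]=\partial_vL(x,v)[\xi]+tQ(\xi),\qquad\forall\,\xi\in T_xM,
\]
which requires only the $C^1$ regularity of $L$. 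Taking $\xi=v$ gives $\partial_vL\big(x,v+tK\big)[v]=\partial_vL(x,v)[v]+tQ(v)$. Substituting this, together with the two computations above, into the expansion of $E\big(x,v+tK\big)$ and simplifying collapses everything to $E(x,v)+tQ(v)-\tfrac12 t^2$, which is exactly \eqref{eq:E-zt}. Finally, subtracting \eqref{eq:E-zt} from \eqref{eq:L-zt} cancels the common terms $tQ(v)-\tfrac12 t^2$ and delivers \eqref{eq:diff-LandE} at once.
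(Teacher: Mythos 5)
Your proof is correct, but it follows a genuinely different route from the paper's. The paper proves \eqref{eq:L-zt} and \eqref{eq:E-zt} by passing to the stationary product type local structure of Remark~\ref{rem:stationary-local-structure}: it writes $L\circ\phi_*\big((y,t),(\nu,\tau)\big)=L_0(y,\nu)+\omega_y(\nu)\tau-\tfrac12\tau^2$, observes that $v+tK$ corresponds to $(\nu,\tau+t)$, and expands the quadratic in $\tau+t$; the energy identity is obtained analogously from \eqref{eq:E-in-charts}. You instead argue intrinsically: integrating $f'(t)=\partial_vL(x,v+tK)[K]=Q(v+tK)=Q(v)-t$ gives \eqref{eq:L-zt} directly from \eqref{eq:noether}, \eqref{QKmeno1} and the linearity of $Q$, and your device of differentiating the already-established identity \eqref{eq:L-zt} in the fibre direction to obtain $\partial_vL(x,v+tK)[\xi]=\partial_vL(x,v)[\xi]+tQ(\xi)$ is a legitimate way to avoid any appeal to a vertical Hessian, so the $C^1$ hypothesis suffices throughout. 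What each approach buys: the paper's computation is a two-line verification once the local splitting is available, but that splitting rests on \cite[Proposition 7.4]{caponio2023-calcvar}; your argument is self-contained, uses only the stated assumptions on the Noether charge, and is in fact the same integration trick the paper itself deploys later in the proof of Lemma~\ref{ghtosplitting}. Both are valid; yours is arguably the more economical in terms of prerequisites.
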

\begin{proof}
	We will prove~\eqref{eq:L-zt};
	the computations for~\eqref{eq:E-zt} are analogous.
	Since the result has a local nature, we can use~\eqref{eq:L-localchart}.
	For every $(x,v) \in TM$, we can write
	\begin{equation*}
		\begin{split}
			L(x,v + tK)
	  &= L\circ \phi_* \big((y,t),(\nu,\tau + t)\big) \\
	  & = L_0(y,\nu) + \omega_y(\nu)(\tau + t) - \frac{1}{2}(\tau + t)^2 \\
	  & = \Big(L_0(y,\nu) + \omega_y(\nu)\tau - \frac{1}{2}\tau^2\Big)
	  + \big(\omega_y(\nu) - \tau\big)t
	  - \frac{1}{2}t^2\\
	  & = L(x,v) + t Q(v) - \frac{1}{2}t^2.
		\end{split}
	\end{equation*}
	This completes the proof.
\end{proof}

Using \eqref{eq:L-zt} and recalling that $Q(\dot{z})$
is constant for all $z \in \mathcal{N}_{p,q}$,
the functional $\mathcal{H}_{p,q}$ can be written as:
\begin{equation}
	\label{eq:calH-splitting}
	\mathcal{H}_{p,q}(z,t)
	= \int_{0}^{1}L\big(z,\dot{z}\big)\d s + tQ(\dot{z}) - \frac{1}{2}t^2
	= \mathcal{L}(z) +t Q(\dot{z}) - \frac{1}{2}t^2.
\end{equation}

\begin{proposition}
	\label{prop:def-t_+-}
	Let 
	\begin{equation}
		\label{eq:condition-on-kappa}
		\kappa \le - \sup_{x \in M}L(x,0)
	\end{equation}
	$($recall \eqref{L0}$)$.
	Then the functionals
	$t_+^{\kappa},t_-^{\kappa}\colon \mathcal{N}_{p,q} \to \mathbb{R}$
	defined by
	\begin{equation}
		\label{eq:def_t+-}
		t_{\pm}^{\kappa}(z) = Q(\dot{z}) \pm \sqrt{Q^2(\dot{z})
		+ 2\big( \mathcal{E}(z)- \kappa \big)},
	\end{equation}
	are well-defined ,
	and they satisfy the following equation:
	\begin{equation}
		\label{eq:prop_t+-}
		\mathcal{E}(F^{t_{\pm}^{\kappa}(z)}(z)) = \kappa.
	\end{equation}
\end{proposition}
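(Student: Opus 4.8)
The plan is to show that the single-variable map $t\mapsto\mathcal{E}(F^t(z))$ is a concave quadratic whose level $\kappa$ is attained exactly at the two numbers $t_\pm^\kappa(z)$, and that the hypothesis \eqref{eq:condition-on-kappa} forces the discriminant of the corresponding quadratic to be non-negative, so that these two numbers are real. The only genuinely substantial step is this last non-negativity, which is precisely the content of the words ``well-defined''.

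First I would compute $\mathcal{E}(F^t(z))$ explicitly. Since $L$ is invariant under the flow $\psi$ of $K$, so is the energy $E=\partial_vL[v]-L$; combining \eqref{eq:L-psiInvariant}, \eqref{eq:dvL-psiInvariant} and \eqref{eq:dz^t} exactly as in the proof of Corollary~\ref{cor:partial_zH} gives the pointwise identity $E(z^t,\dot z^t)=E\big(z,\dot z+tK(z)\big)$. Feeding this into \eqref{eq:E-zt} yields
\[
	E(z^t,\dot z^t)=E(z,\dot z)+tQ(\dot z)-\frac{1}{2}t^2 .
\]
Because $z\in\mathcal{N}_{p,q}$, the Noether charge $Q(\dot z(s))$ equals a constant a.e.\ on $[0,1]$, so integrating over $[0,1]$ produces $\mathcal{E}(F^t(z))=\mathcal{E}(z)+tQ(\dot z)-\frac{1}{2}t^2$, a concave parabola in $t$. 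Setting this equal to $\kappa$ gives the quadratic $t^2-2Q(\dot z)\,t+2\big(\kappa-\mathcal{E}(z)\big)=0$, whose roots are $Q(\dot z)\pm\sqrt{Q^2(\dot z)+2(\mathcal{E}(z)-\kappa)}$; these are exactly the values $t_\pm^\kappa(z)$ of \eqref{eq:def_t+-}, and this already delivers \eqref{eq:prop_t+-} once the roots are known to be real.

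The main obstacle is therefore to prove that the discriminant $Q^2(\dot z)+2(\mathcal{E}(z)-\kappa)$ is non-negative. For this I would pass to the stationary product type local structure of Remark~\ref{rem:stationary-local-structure}: writing $(x,v)=\phi((y,t),(\nu,\tau))$ and using the local expression \eqref{eq:E-in-charts} for $E$ together with $Q(v)=\omega_y(\nu)-\tau$ from \eqref{eq:Q-in-charts}, the cross terms $\pm\omega_y(\nu)\tau$ and $\pm\frac{1}{2}\tau^2$ cancel and one obtains the pointwise identity
\[
	E(x,v)+\frac{1}{2}Q^2(v)=E_0(y,\nu)+\frac{1}{2}\omega_y^2(\nu).
\]
By Remark~\ref{rem:E0-min0} (inequality \eqref{eq:min-E0}) one has $E_0(y,\nu)\ge E_0(y,0)=-L_0(y,0)=-L(x,0)$, while $\frac{1}{2}\omega_y^2(\nu)\ge 0$, so
\[
	E(x,v)+\frac{1}{2}Q^2(v)\ge -L(x,0)\ge -\sup_{x\in M}L(x,0)\ge\kappa ,
\]
the middle inequality being trivial and the last one being the hypothesis \eqref{eq:condition-on-kappa} (which is meaningful thanks to \eqref{L0}). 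Evaluating this bound along $z$ and again using that $Q(\dot z(s))$ is the constant $Q(\dot z)$, integration over $[0,1]$ gives $\mathcal{E}(z)+\frac{1}{2}Q^2(\dot z)\ge\kappa$, i.e.\ exactly $Q^2(\dot z)+2(\mathcal{E}(z)-\kappa)\ge0$. This makes $t_\pm^\kappa$ well-defined and, by the second paragraph, completes the proof of \eqref{eq:prop_t+-}. I expect the cancellation computation and the reduction to the fiberwise minimum $E_0(y,0)$ to be the only places requiring care, since everything else is the solution of a quadratic.
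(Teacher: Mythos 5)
Your proof is correct and follows essentially the same route as the paper's: reduce \eqref{eq:prop_t+-} to the concave quadratic $\mathcal{E}(z)+tQ(\dot z)-\tfrac12 t^2=\kappa$ via \eqref{eq:E-zt} and the constancy of $Q(\dot z)$ on $\mathcal{N}_{p,q}$, then establish non-negativity of the discriminant from the pointwise identity $E(x,v)+\tfrac12 Q^2(v)=E_0(y,\nu)+\tfrac12\omega_y^2(\nu)$ in the stationary product local chart together with \eqref{eq:min-E0} and \eqref{eq:condition-on-kappa}. The only difference is that you spell out the derivation of $\mathcal{E}(F^t(z))=\mathcal{E}(z)+tQ(\dot z)-\tfrac12 t^2$ in slightly more detail than the paper, which simply cites \eqref{eq:E-zt}.
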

\begin{proof}
	Since $Q(\dot{z})$ is constant for every $z \in\mathcal{N}_{p,q}$,
	from~\eqref{eq:E-zt} we have that $t_{\pm}^{\kappa}(z)$ are the only two solutions of
	\begin{equation*}
		\label{eq:E-zt-equalkappa}
		\mathcal{E}(F^t(z)) = \mathcal{E}(z^t) = \mathcal{E}(z) + t Q(\dot{z}) - \frac{1}{2}t^2 = \kappa.
	\end{equation*}
	Hence, it remains to prove that for every $z \in\mathcal{N}_{p,q}$, we have
	\[
		\mathcal{E}(z) + \frac{1}{2}Q^2(\dot{z}) \ge  \kappa,
	\]
	provided that $\kappa$ satisfies~\eqref{eq:condition-on-kappa}.
	As a consequence, it suffices to prove that
	\begin{equation}
		\label{eq:Ec-Q2gekappa}
		E(x,v) + \frac{1}{2}Q^2(v) \ge \kappa, \qquad \forall (x,v) \in TM.
	\end{equation}
	Using the expression of $L$ in a local chart in a neighbourhood of $x\in M$,
	in particular~\eqref{eq:Q-in-charts} and~\eqref{eq:E-in-charts},
	and setting $(x,v) = \phi_*\big((y,t),(\nu,\tau)\big)$,
	we obtain the following equalities:
	\begin{multline}
		\label{eq:Ec-Q2}
		E(x,v) + \frac{1}{2}Q^2(v)
		= E_0(y,\nu) + \omega_y(\nu)\tau- \frac{1}{2}\tau^2
		+ \frac{1}{2}\big(\omega_y(\nu) - \tau\big)^2\\
		= E_0(y,\nu) + \frac{1}{2}\omega_y^2(\nu),
	\end{multline}
	where $E_0(y,\nu)$ is the energy function of the Lagrangian $L_0$.
	As a consequence, using~\eqref{eq:min-E0}, we obtain
	\begin{equation*}
		\label{eq:tmp2}
		E_0(y,\nu) + \frac{1}{2}\omega_y^2(\nu) \ge E_0(y,0)
		= - L_0(y,0) = - L(x,0).
	\end{equation*}
	Since $\kappa$ satisfies~\eqref{eq:condition-on-kappa}, we infer
	\begin{equation*}
		E(x,v) + \frac{1}{2}Q^2(v) \ge - L(x,0) \ge \kappa, \qquad \forall (x,v) \in TM,
	\end{equation*}
	and we are done.
\end{proof}
\begin{remark}
	\label{kleq0}
	Our problem naturally leads to the
	condition~\eqref{eq:condition-on-kappa}.
	For a Finsler spacetime $(M,L)$
	(see Section~\ref{sec:LF}),
	this condition means $\kappa \le 0$.
	Therefore, we only consider the energy values
	that correspond to causal geodesics
	(timelike or lightlike geodesics). 	
\end{remark}

\begin{lemma}\label{neqkappa}
	If $\kappa$ satisfies \eqref{eq:condition-on-kappa} then 
	\[
		\mathcal E(z)	+\frac 1 2 Q^2(z)>\kappa,
		\quad \forall z\in\mathcal N_{p,q}.
	\]
\end{lemma}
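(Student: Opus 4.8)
The plan is to start from the pointwise inequality \eqref{eq:Ec-Q2gekappa}, already established inside the proof of Proposition~\ref{prop:def-t_+-}, and to upgrade it to a strict inequality after integration. Since $Q(\dot z)$ is constant along any $z\in\mathcal N_{p,q}$, I would first rewrite
\[
	\mathcal E(z)+\tfrac12 Q^2(\dot z)
	=\int_0^1\Big(E(z,\dot z)+\tfrac12 Q^2(\dot z)\Big)\,\d s,
\]
so that the claim reduces to showing that the integrand, which is $\ge\kappa$ a.e.\ by \eqref{eq:Ec-Q2gekappa}, is in fact strictly larger than $\kappa$ on a set of positive measure.

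Next I would analyse the equality case pointwise. Using the local expression \eqref{eq:Ec-Q2} together with Remark~\ref{rem:E0-min0}, in a stationary product-type chart (Remark~\ref{rem:stationary-local-structure}) one has, at a.e.\ $s$,
\[
	E(z,\dot z)+\tfrac12 Q^2(\dot z)
	=E_0(y,\nu)+\tfrac12\omega_y^2(\nu)\ge E_0(y,\nu)\ge E_0(y,0)=-L(z,0)\ge\kappa,
\]
where $\nu$ denotes the $S_p$-component of $\dot z$ and the last inequality follows from \eqref{eq:condition-on-kappa}. Since $E_0(y,\nu)>E_0(y,0)$ for $\nu\ne0$, the middle inequalities are strict exactly when $\nu(s)\ne0$, i.e.\ when $\dot z(s)\notin\mathbb R\,K(z(s))$; this is a chart-independent condition, and one may alternatively check directly from \eqref{eq:E-zt} that $v\in\mathbb R\,K(x)$ yields $E(x,v)+\tfrac12 Q^2(v)=-L(x,0)$. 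Hence the integrand strictly exceeds $\kappa$ precisely on the set $A:=\{s:\dot z(s)\notin\mathbb R\,K(z(s))\}$.

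It then remains to prove that $A$ has positive measure, and this is the step I expect to be the main obstacle. I would argue by contradiction: if $\dot z(s)=\mu(s)K(z(s))$ for a.e.\ $s$ with $\mu\in L^2([0,1])$, then $z$ is a Carathéodory solution of $\dot z=\mu(s)K(z)$; since $K$ is $C^3$ (hence locally Lipschitz) and $\mu\in L^1$, uniqueness gives $z(s)=\psi\big(\int_0^s\mu,\,p\big)$, so $z$ lies entirely on the flow line of $K$ through $p$. But $z(1)=q=\gamma(0)$ would then force $p\in\gamma(\mathbb R)$, contradicting the standing hypothesis $p\notin\gamma(\mathbb R)$. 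Therefore $\lvert A\rvert>0$, and integrating the pointwise bound — strict on $A$, non-strict on its complement — yields $\mathcal E(z)+\tfrac12 Q^2(\dot z)>\kappa$. The delicate points will be the low regularity ($z\in H^1$, so the ODE argument must be phrased in the Carathéodory framework) and making the local strictness statement coherent across charts, both of which are handled by phrasing the dichotomy through the intrinsic condition $\dot z\in\mathbb R\,K(z)$.
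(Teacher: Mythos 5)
Your proof is correct and follows essentially the same route as the paper's: both reduce the strict inequality, via the pointwise bound \eqref{eq:Ec-Q2gekappa} and the local splitting of Remark~\ref{rem:stationary-local-structure}, to showing that $\dot z$ cannot be a.e.\ proportional to $K(z)$, since that would place $p$ on the flow line $\gamma$ and contradict $p\notin\gamma(\mathbb R)$. The only (harmless) difference is in the final step: the paper uses the constancy of $Q(\dot z)$ on $\mathcal N_{p,q}$ to make the proportionality factor constant, so that $\dot z$ is continuous and $z$ is a reparametrized flow line, whereas you invoke uniqueness for the Carath\'eodory problem $\dot z=\mu(s)K(z)$ with $\mu\in L^1$ and $K$ locally Lipschitz; both arguments are valid.
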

\begin{proof}
	From \eqref{eq:Ec-Q2gekappa}, it is enough to prove that 	
	\[
		\label{eq:tmp3}
		\mathcal{E}(z) + \frac{1}{2}Q^2(\dot{z}) \neq  \kappa.
	\]
	By contradiction assume that $\mathcal{E}(z) + \frac{1}{2}Q^2(\dot{z}) =  \kappa$. 
	Using~\eqref{eq:Ec-Q2} and~\eqref{eq:min-E0},
	we conclude that in any neighbourhood $U_{z(\bar s)}$, $\bar s\in [0,1]$,  as in Remark~\ref{rem:stationary-local-structure},
	and  for a.e. $s$ in a neighbourhood  of $\bar s$, the vector $\dot z(s)$  corresponds
	through $\phi_*$ to a  vector whose component in $TS_{z(\bar s)}$ vanishes.
	This is equivalent to the existence of a function
	$\alpha\colon [0,1] \to \mathbb{R}$
	such that
	\[
		\dot{z}(s) = \alpha(s)K(z(s)),
		\qquad \text{for a.e. } s \in [0,1].
	\]
	Since $Q(\dot z)$ is constant a.e. and $Q(\alpha(s)K(z(s)))=-\alpha(s)$,
	we deduce that $\alpha$ is constant a.e. and $\dot z$ is equivalent to a continuous
	$TM$-valued function on $[0,1]$.
	Hence $p$ and $q$ are on the same flow line of $K$,
	which is a contradiction. 
\end{proof}		
\begin{remark}\label{C1}
	As a consequence of Lemma~\ref{neqkappa},
	$t_{\pm}^{\kappa}$ in \eqref{eq:def_t+-} are $C^1$
	functionals on $\mathcal N_{p,q}$.
\end{remark}

\begin{corollary}
	\label{cor:partial_tH-notnull}
	If $\kappa$ satisfies \eqref{eq:condition-on-kappa}, then
	\[	
		\partial_t\mathcal{H}_{p,q}(z,t_+^{\kappa}(z)) \ne 0,
		\quad \forall z \in \mathcal{N}_{p,q},
	\]
	and the same holds
	replacing $t_+^{\kappa}(z)$ with $t_-^{\kappa}(z)$.
	\end{corollary}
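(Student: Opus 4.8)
The plan is to exploit the explicit closed form already available for $\mathcal{H}_{p,q}$ on $\mathcal{N}_{p,q}$. Since $Q(\dot z)$ is constant for $z\in\mathcal N_{p,q}$, equation~\eqref{eq:calH-splitting} gives $\mathcal{H}_{p,q}(z,t)=\mathcal{L}(z)+tQ(\dot z)-\tfrac12 t^2$, where the $z$-dependence is frozen once $z$ is fixed. Differentiating this expression with respect to $t$ is then immediate and yields $\partial_t\mathcal{H}_{p,q}(z,t)=Q(\dot z)-t$. This reduces the whole statement to evaluating a single affine function of $t$ at the two prescribed values.

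Next I would substitute $t=t_{\pm}^{\kappa}(z)$ from~\eqref{eq:def_t+-}. Because $t_{\pm}^{\kappa}(z)=Q(\dot z)\pm\sqrt{Q^2(\dot z)+2\big(\mathcal{E}(z)-\kappa\big)}$, the two $Q(\dot z)$ terms cancel and one is left with $\partial_t\mathcal{H}_{p,q}(z,t_{\pm}^{\kappa}(z))=\mp\sqrt{Q^2(\dot z)+2\big(\mathcal{E}(z)-\kappa\big)}$. Thus the conclusion holds precisely when the radicand is strictly positive.

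The final step is to observe that the required strict positivity of the radicand is nothing but the strict inequality already established in Lemma~\ref{neqkappa}, namely $\mathcal{E}(z)+\tfrac12 Q^2(\dot z)>\kappa$ for every $z\in\mathcal N_{p,q}$: rewriting the radicand as $2\big(\mathcal{E}(z)+\tfrac12 Q^2(\dot z)-\kappa\big)$ shows it is strictly positive, so both derivatives are nonzero. I do not expect any genuine obstacle here; the only substantive point is that the hypothesis~\eqref{eq:condition-on-kappa} by itself yields merely the non-strict bound~\eqref{eq:Ec-Q2gekappa}, and it is the sharpening to a \emph{strict} inequality in Lemma~\ref{neqkappa} (which rules out the degenerate case where $p$ and $q$ lie on the same flow line of $K$) that is exactly what prevents the two roots $t_\pm^{\kappa}(z)$ from coinciding and forces the $t$-derivative to be nonvanishing.
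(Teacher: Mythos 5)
Your proposal is correct and coincides with the paper's own argument: differentiate the closed form \eqref{eq:calH-splitting} in $t$, substitute $t_{\pm}^{\kappa}(z)$ to get $\mp\sqrt{Q^2(\dot z)+2(\mathcal{E}(z)-\kappa)}$, and invoke Lemma~\ref{neqkappa} for the strict positivity of the radicand. Your closing remark correctly identifies that the strict inequality of Lemma~\ref{neqkappa}, not merely \eqref{eq:Ec-Q2gekappa}, is the essential point.
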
   \begin{proof}
	By~\eqref{eq:calH-splitting} and~\eqref{eq:def_t+-}, we have
	\begin{equation}
		\label{eq:computatoin-partial_tH}
		\partial_t\mathcal{H}_{p,q}(z,t_+^{\kappa}(z))
		= Q(\dot{z}) - t_+^{\kappa}(z) = -
		\sqrt{
			Q^2(\dot{z}) + 2\big( \mathcal{E}(z)- \kappa \big)
		}.
	\end{equation}
	Then, the thesis follows by Lemma~\ref{neqkappa}.	
\end{proof}                                       
\section{Main result}
We are ready to proof our main result:
\begin{theorem}
	\label{theorem:almostFermat}
	Let $L\colon TM\to \mathbb{R}$ satisfy
	Assumptions~\ref{ass:L}, \ref{ass:L1}, and \ref{ass:bounds},
	and let $\kappa \in \mathbb{R}$ satisfy~\eqref{eq:condition-on-kappa}.
	A curve $\ell\colon [0,1] \to M$ is a solution of the Euler-Lagrange
	equations~\eqref{eq:Euler-Lagrange}
	joining $p$ and $\gamma$ with energy $\kappa$
	if and only if
	there exists $z \in \mathcal{N}_{p,q}$ such that
	$\ell = F^{t_+^{\kappa}(z)}(z)$ or $\ell = F^{t_-^{\kappa}(z)}(z)$,
	and the following equality holds:
	\begin{equation}
		\label{eq:t_+-critical?}
		\d t_{\pm}^{\kappa}(z)
		= \frac{\d \mathcal{E}(z) - \d \mathcal{L}(z)}
		{\sqrt{Q^2(\dot{z})+ 2\big( \mathcal{E}(z)- \kappa \big)}},
	\end{equation}
	or
	\begin{equation}
		\label{eq:t_--critical?}
		\d t_-^{\kappa}(z)
		= \frac{\d \mathcal{L}(z) - \d \mathcal{E}(z)}
		{\sqrt{Q^2(\dot{z})+ 2\big( \mathcal{E}(z)- \kappa \big)}}.
	\end{equation}
\end{theorem}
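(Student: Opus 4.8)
The plan is to reduce the statement to a single computation of the restriction of $\partial_z\mathcal H_{p,q}\big(z,t_\pm^{\kappa}(z)\big)$ to $T_z\mathcal N_{p,q}$. Recall that, for fixed $t$, the curve $z^t=F^t(z)$ is a solution of the Euler--Lagrange equations joining $p$ to $\gamma(t)$ precisely when $z\in\mathcal N_{p,q}$ and $\partial_z\mathcal H_{p,q}(z,t)[\xi]=0$ for all $\xi\in T_z\mathcal N_{p,q}$; this follows by combining Proposition~\ref{prop:restriction-Hpq} and Corollary~\ref{cor:partial_zH}, the passage between $\mathcal H_{p,q}(\cdot,t)$ and $\mathcal L_{p,\gamma(t)}$ being the diffeomorphism $F^t$ of Proposition~\ref{prop:N_pq-Npgammat} and \eqref{eq:def-hk}. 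On the other hand, Proposition~\ref{prop:def-t_+-} tells us that $\mathcal E(z^t)=\kappa$ exactly for $t\in\{t_+^{\kappa}(z),t_-^{\kappa}(z)\}$. Hence the entire content of the theorem is to rewrite the vanishing of $\partial_z\mathcal H_{p,q}\big(z,t_\pm^{\kappa}(z)\big)$ on $T_z\mathcal N_{p,q}$ as the identity of one-forms \eqref{eq:t_+-critical?} or \eqref{eq:t_--critical?}.

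First I would compute the two differentials that enter the identity. Set $D(z):=\sqrt{Q^2(\dot z)+2(\mathcal E(z)-\kappa)}$, which is strictly positive by Lemma~\ref{neqkappa}, so that $t_\pm^{\kappa}$ is $C^1$ (Remark~\ref{C1}). Viewing $z\mapsto Q(\dot z)$ as the functional on $\mathcal N_{p,q}$ given by the constant value of the Noether charge, its differential $\d\big(Q(\dot z)\big)[\xi]$ is by \eqref{eq:TzNpq} exactly the constant $c$ such that $\partial_xQ(\dot z)[\xi]+Q(\dot\xi)=c$. Differentiating \eqref{eq:def_t+-} and using $\d D(z)=\big(Q(\dot z)\,\d(Q(\dot z))+\d\mathcal E(z)\big)/D(z)$ gives
\[
	\d t_\pm^{\kappa}(z)=\d\big(Q(\dot z)\big)\pm\frac{Q(\dot z)\,\d\big(Q(\dot z)\big)+\d\mathcal E(z)}{D(z)} .
\]
Differentiating instead the splitting \eqref{eq:calH-splitting} in $z$ along $\xi\in T_z\mathcal N_{p,q}$ (with $t$ fixed) yields $\partial_z\mathcal H_{p,q}(z,t)[\xi]=\d\mathcal L(z)[\xi]+t\,\d\big(Q(\dot z)\big)[\xi]$.

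Then the elimination is purely algebraic. Putting $t=t_\pm^{\kappa}(z)=Q(\dot z)\pm D(z)$, the equation $\partial_z\mathcal H_{p,q}\big(z,t_\pm^{\kappa}(z)\big)[\xi]=0$ reads $\d\mathcal L(z)[\xi]=-\big(Q(\dot z)\pm D(z)\big)\,\d\big(Q(\dot z)\big)[\xi]$. Substituting this value of $\d\mathcal L(z)[\xi]$ into the right-hand side of \eqref{eq:t_+-critical?} in the $t_+^{\kappa}$ case, and into that of \eqref{eq:t_--critical?} in the $t_-^{\kappa}$ case, turns each of them into exactly the expression for $\d t_\pm^{\kappa}(z)$ found above. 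As every step is reversible, $\partial_z\mathcal H_{p,q}\big(z,t_\pm^{\kappa}(z)\big)$ vanishes on $T_z\mathcal N_{p,q}$ if and only if the corresponding equation holds.

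It remains to glue the two implications with the energy constraint. For necessity, if $\ell$ is a solution joining $p$ and $\gamma$ with energy $\kappa$, write $\ell(1)=\gamma(t^\ast)$ and set $z:=F^{-t^\ast}(\ell)$; Propositions~\ref{prop:restriction-Npq} and~\ref{prop:N_pq-Npgammat} give $z\in\mathcal N_{p,q}$ and $\ell=F^{t^\ast}(z)$, constancy of the energy (Remark~\ref{rem:E-constant-on-critical-points}, \eqref{eq:E-const}) together with \eqref{eq:prop_t+-} forces $t^\ast\in\{t_+^{\kappa}(z),t_-^{\kappa}(z)\}$, and criticality of $\ell$ for $\mathcal L_{p,\gamma(t^\ast)}$ gives $\partial_z\mathcal H_{p,q}(z,t^\ast)=0$ on $T_z\mathcal N_{p,q}$, hence the relevant displayed equation by the computation above. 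For sufficiency one runs this backwards: the displayed equation yields $\partial_z\mathcal H_{p,q}\big(z,t_\pm^{\kappa}(z)\big)=0$ on $T_z\mathcal N_{p,q}$, Proposition~\ref{prop:restriction-Hpq} upgrades this to \eqref{eq:partialz-H-0}, Corollary~\ref{cor:partial_zH} makes $\ell=F^{t_\pm^{\kappa}(z)}(z)$ a solution of the Euler--Lagrange equations reaching $\gamma$ at $t_\pm^{\kappa}(z)$, and \eqref{eq:prop_t+-} guarantees its energy equals $\kappa$. The only genuinely delicate points are the bookkeeping ones: identifying $\d\big(Q(\dot z)\big)$ with the constant in \eqref{eq:TzNpq}, and matching each arrival time $t_\pm^{\kappa}$ with the correct sign choice in \eqref{eq:t_+-critical?}--\eqref{eq:t_--critical?}; the algebra itself is routine.
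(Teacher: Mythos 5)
Your proof is correct and follows the same overall reduction as the paper's: both arguments boil the theorem down to the equivalence between the displayed one-form identities and the vanishing of $\partial_z\mathcal H_{p,q}(z,t_\pm^{\kappa}(z))$ on $T_z\mathcal N_{p,q}$, with Propositions~\ref{prop:restriction-Npq}, \ref{prop:N_pq-Npgammat}, \ref{prop:restriction-Hpq}, Corollary~\ref{cor:partial_zH} and \eqref{eq:prop_t+-} handling the passage between that condition and ``solution with energy $\kappa$''. Where you diverge is in how that equivalence is established. The paper first proves the identity $\mathcal H_{p,q}(z,t_+^{\kappa}(z))=\mathcal L(z)-\mathcal E(z)+\kappa$ on all of $\mathcal N_{p,q}$ (a consequence of the invariance \eqref{eq:diff-LandE} of $L-E$ under $v\mapsto v+tK$), differentiates it by the chain rule, and divides by $\partial_t\mathcal H_{p,q}(z,t_+^{\kappa}(z))=-\sqrt{Q^2(\dot z)+2(\mathcal E(z)-\kappa)}\neq 0$. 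You instead differentiate the explicit formula \eqref{eq:def_t+-} for $t_\pm^{\kappa}$ and the splitting \eqref{eq:calH-splitting} of $\mathcal H_{p,q}$, and check algebraically that each of \eqref{eq:t_+-critical?} and \eqref{eq:t_--critical?} reduces to $\d\mathcal L(z)+t_\pm^{\kappa}(z)\,\d\big(Q(\dot z)\big)=0$ on $T_z\mathcal N_{p,q}$, i.e.\ to $\partial_z\mathcal H_{p,q}(z,t_\pm^{\kappa}(z))=0$ there. Your computation is right (the algebra checks out in both sign cases), it uses the same two nontrivial inputs in disguise --- Lemma~\ref{neqkappa} for the strict positivity of the square root, and Proposition~\ref{prop:LandEofzt} through \eqref{eq:calH-splitting} --- and it has the mild advantage of making transparent that the two displayed equations are each equivalent to the same criticality condition for the respective sign. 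Your identification of $\d\big(Q(\dot z)\big)[\xi]$ with the constant $c$ of \eqref{eq:TzNpq} is legitimate and matches the paper's own use of the functional $\mathcal Q$ on $\mathcal N_{p,q}$.
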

\begin{proof}
	Consider a critical curve $\ell \in \mathcal{N}_{p,\gamma(t)}$ with energy $\kappa$.
	We know that $F^t$ is a diffeomorphism,
	so there exists $z \in \mathcal{N}_{p,q}$ such that $F^{-t}(\ell) = z$
	and $t = t_+^{\kappa}(z)$ or $t = t_-^{\kappa}(z)$.
	For this proof, we will focus on the case where $t = t_+^{\kappa}(z)$.

	Since $\ell$ is a critical curve for $\mathcal{L}_{p,\gamma(t)}$,
	by the definition of $\mathcal{H}_{p,q}$ (see~\eqref{eq:def-hk}),
	we have $\partial_z\mathcal{H}_{p,q}(z,t) = 0$.
	Furthermore, using~\eqref{eq:diff-LandE} and the definition of $t_+^{\kappa}(z)$,
	we obtain the following equation:
	\begin{equation}
		\label{eq:H-in-t+}
		\begin{split}
			\mathcal{H}_{p,q}(z,t_+^{\kappa}(z))
	& = \int_{0}^{1} L\big(z,\dot{z} + t_+^{\kappa}(z)K(z)\big) \d s\\
	& = \mathcal{L}(z) - \mathcal{E}(z) + k,
	\qquad \forall z \in \mathcal{N}_{p,q}.
		\end{split}
	\end{equation}
	By differentiating both sides of~\eqref{eq:H-in-t+}, we obtain:
	\begin{equation}
		\label{eq:diff-H-in-t+}
		\partial_z\mathcal{H}_{p,q}(z,t_+^{\kappa}(z))
		+ \partial_t\mathcal{H}_{p,q}(z,t_+^{\kappa}(z))\, \d t_+^{\kappa}(z)
		= \d \mathcal{L}(z) - \d\mathcal{E}(z).
	\end{equation}
	Since we know that $\partial_z\mathcal{H}_{p,q}(z,t_+^{\kappa}(z)) = 0$,
	substituting this into~\eqref{eq:diff-H-in-t+}, we get:
	\[
		\partial_t\mathcal{H}_{p,q}(z,t_+^{\kappa}(z))\, \d t_+^{\kappa}(z)
		= \d \mathcal{L}(z) - \d\mathcal{E}(z).
	\]
	According to Corollary~\ref{cor:partial_tH-notnull},
	we have $\partial_t\mathcal{H}_{p,q}(z,t_+^{\kappa}(z)) \ne 0$
	for every $z \in \mathcal{N}_{p,q}$.
	Using equation~\eqref{eq:computatoin-partial_tH},
	we obtain~\eqref{eq:t_+-critical?}.

	For the converse,
	if $z \in \mathcal{N}_{p,q}$ satisfies~\eqref{eq:t_+-critical?},
	we can use~\eqref{eq:diff-H-in-t+}
	to conclude that $\partial_z\mathcal{H}_{p,q}(z,t_+^{\kappa}(z)) = 0$.
	By Proposition~\ref{prop:restriction-Hpq} and Corollary~\ref{cor:partial_zH},
	we then deduce that $\ell = F^{t_+^{\kappa}(z)}(z)$
	is a critical point of $\mathcal{L}_{p,\gamma(t)}$.
	Hence the thesis follows from Proposition~\ref{prop:restriction-Npq} 
	and Remark~\ref{rem:E-constant-on-critical-points}.
\end{proof}
\begin{remark}\label{2hom}
	If $L$ is homogeneous of degree $2$
	in the velocities
	(i.e., $L$ is a Lorentz-Finsler metric,   Definition~\ref{def:Lorentz-Finsler-metric}),
	then $\mathcal{L}(z) = \mathcal{E}(z)$ for every $z$ and, consequently,
	\eqref{eq:t_+-critical?} and~\eqref{eq:t_--critical?}
	are equivalent to $\d t_+^{\kappa}(z) = 0$ and $\d t_-^{\kappa}(z) = 0$,
	respectively.
	Hence, in this case we re-obtain, for $\kappa\le 0$,
	the Fermat's principle in a stationary spacetime
	that globally splits \cite{fortunato1995}
	(also known as {\em standard stationary spacetime})
	as well as in a
	stationary spacetime that may  not globally split 
	\cite{caponio2002-diffgeo}.
	Furthermore, we also obtain a Fermat's principle in a stationary Finsler
	spacetime that is not necessarily a stationary splitting one
	(compare with \cite[Appendix B]{caponio2018}),
	including  also timelike geodesics.
\end{remark}

\section{An existence and multiplicity result}
\label{Section:ex&mult}
In this section we assume that $L$ is a Lorentz-Finsler metric as in
Section~\ref{sec:LF}, satisfying Assumption~\ref{ass:L}. 
By Theorem~\ref{theorem:almostFermat} and Remark~\ref{2hom},
the critical points $z$ of the functionals
$t^\kappa_\pm\colon \mathcal N_{p,q}\to \mathbb R$
give all and only the solutions $\ell$ of~\eqref{eq:Euler-Lagrange}
connecting $p$ to $\gamma$ and having fixed energy $\kappa\leq 0$
(recall Remark~\ref{kleq0})
through the relation $\ell=F^{t^\kappa_\pm(z)}(z)$.
We are going to show that $t^\kappa_\pm$ satisfy the Palais-Smale condition
provided that $\mathcal J_{p,\gamma(t)}$
(recall Proposition~\ref{prop:restriction-Npq})
is {\em pseudocoercive}, for all $t\in\mathbb R$.
Pseudocoercivity is a compactness assumption introduced in~\cite{giannoni1999}
and recently revived in~\cite{caponio2023-calcvar}.
Let us recall it:
\begin{definition}
	Let $t, c\in \mathbb R$;
	the manifold  $\mathcal{N}_{p,\gamma(t)}$ is said to be 
	{\em $c$-precompact} if 
	every sequence $(z_n)_n \subset \mathcal J_{p,\gamma(t)}^c\coloneqq\{z\in \mathcal N_{p,\gamma(t)}:\mathcal J_{p,\gamma(t)}(z)\leq c\}$
	has a uniformly convergent subsequence.                            
	We say that $\mathcal J_{p,\gamma(t)}$ is {\em pseudocoercive} if $\mathcal N_{p,\gamma(t)}$ is $c$-precompact for all $c\in\mathbb R$.
\end{definition}
\begin{remark}
	A sufficient condition ensuring that $\mathcal J_{p,r}$ is pseudocoercive,
	for all $p,r\in M$, is based on the existence of a $C^1$ function
	$\varphi\colon M\to \mathbb R$ such that $\d\varphi(K)>0$,
	see~\cite[Proposition 8.1]{caponio2023-calcvar}.
	It is then natural to look at this result in the framework of causality properties
	of a Finsler spacetime as  global hyperbolicity. 
	We analyze this question in Appendix~\ref{equivalence}.
\end{remark}

\begin{remark}\label{cbounded}
	We point out that if $\mathcal{J}_{p,r}$ is pseudocoercive then,
	for each $c\in \mathbb{R}$,
	\[
		\sup_{z\in \mathcal J_{p,r}^c} |Q(\dot z)|<+\infty,
	\]
	see \cite[Theorem 7.6]{caponio2023-calcvar}.
\end{remark}
Henceforth, our attention turns to the functional $t_+^\kappa$,
recognizing that all the subsequent considerations
can be replicated comparably for $t_-^\kappa$.
\begin{lemma}                                                                   
	\label{Qbounded}
	Let $L$ be a Lorentz-Finsler metric, $p\in M$ and $\gamma=\gamma(t)$
	be a flow line of $K$ such that $p\not \in \gamma(\mathbb{R})$.
	Assume that $\mathcal J_{p,\gamma(t)}$ is pseudocoercive for all
	$t\in\mathbb{R}$.
	Let $(z_n)\subset \mathcal N_{p,q}$ such that $t^\kappa_+(z_n)$ is bounded,
	then
	\begin{equation}
		\label{supQ}
		\sup_{m} |Q(\dot z_n)|<+\infty.
	\end{equation}
\end{lemma}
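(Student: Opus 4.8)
The plan is to establish the uniform bound \eqref{supQ} by combining the defining relation for $t_+^\kappa$ with the pseudocoercivity hypothesis, used through Remark~\ref{cbounded}. Recall from \eqref{eq:def_t+-} that
\begin{equation*}
	t_+^\kappa(z_n) = Q(\dot z_n) + \sqrt{Q^2(\dot z_n) + 2\big(\mathcal E(z_n) - \kappa\big)}.
\end{equation*}
Since $L$ is $2$-homogeneous we have $\mathcal E = \mathcal L = \mathcal J_{p,q}$ on $\mathcal N_{p,q}$, so the quantity under the square root is $Q^2(\dot z_n) + 2(\mathcal J_{p,q}(z_n) - \kappa)$. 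The first step is therefore to rewrite the hypothesis ``$t_+^\kappa(z_n)$ bounded'' as a relation linking $Q(\dot z_n)$ and $\mathcal J_{p,q}(z_n)$, and to extract from it the information needed to apply Remark~\ref{cbounded} on some manifold $\mathcal N_{p,\gamma(t)}$.

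The key idea I would use is that the diffeomorphism $F^t$ transports the bound: if $t_n \coloneqq t_+^\kappa(z_n)$ is bounded, then by \eqref{eq:prop_t+-} the images $z_n^{t_n} = F^{t_n}(z_n)$ all have energy exactly $\kappa$, hence (again by $2$-homogeneity) $\mathcal J_{p,\gamma(t_n)}(z_n^{t_n}) = \mathcal L(z_n^{t_n}) = \kappa$ for every $n$. Passing to a subsequence so that $t_n \to \bar t$ for some $\bar t \in \mathbb R$, one would like to say that, up to a uniformly small perturbation, the $z_n^{t_n}$ live in a single sublevel set $\mathcal J_{p,\gamma(\bar t)}^c$ and invoke pseudocoercivity of $\mathcal J_{p,\gamma(\bar t)}$ together with Remark~\ref{cbounded} to bound $|Q(\dot z_n^{t_n})|$. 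Finally I would pull the bound back to $z_n$ via the transformation law \eqref{eq:dvLzt2}, namely $Q(\dot z_n^{t}) = Q(\dot z_n) - t$, which gives
\begin{equation*}
	|Q(\dot z_n)| \le |Q(\dot z_n^{t_n})| + |t_n|,
\end{equation*}
and both terms on the right are now bounded.

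The main obstacle is the passage between the moving target manifolds $\mathcal N_{p,\gamma(t_n)}$ and a fixed one. Since pseudocoercivity and Remark~\ref{cbounded} are stated for each fixed $t$ separately, I must either directly apply the $\bar t$-level estimate to the curves $\gamma(t_n)$ with $t_n$ near $\bar t$, or re-route through $\mathcal N_{p,q}$. The cleaner route is probably to fix $\bar t = 0$ (i.e.\ work on $\mathcal N_{p,q}$ itself) by using that $t_n$ is bounded to show $\mathcal J_{p,q}(z_n) = \mathcal E(z_n)$ is bounded \emph{above}: from the expression for $t_+^\kappa(z_n)$ one reads off $\mathcal E(z_n) - \kappa = \tfrac12\big(t_+^\kappa(z_n) - Q(\dot z_n)\big)\big(t_+^\kappa(z_n) + Q(\dot z_n)\big)$, and a careful sign analysis — using that the square root is nonnegative and that $\mathcal E(z_n) + \tfrac12 Q^2(\dot z_n) > \kappa$ strictly by Lemma~\ref{neqkappa} — should confine $z_n$ to a sublevel set $\mathcal J_{p,q}^c$. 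Then pseudocoercivity of $\mathcal J_{p,q}$ and Remark~\ref{cbounded} bound $|Q(\dot z_n)|$ directly, avoiding the moving-manifold difficulty altogether. The delicate point is controlling the interplay of the two (a priori unbounded) quantities $\mathcal E(z_n)$ and $Q^2(\dot z_n)$ inside the square root: one must rule out the scenario where both diverge while their combination keeps $t_+^\kappa(z_n)$ bounded, which is exactly where the strict inequality of Lemma~\ref{neqkappa} and the nonnegativity of the radicand are essential.
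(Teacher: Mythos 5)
Your proposal does not close the argument; both routes you sketch have the same gap, namely that boundedness of $t_+^{\kappa}(z_n)$ alone does \emph{not} confine $z_n$ (nor any fixed-target transform of it) to a single sublevel set. Squaring the defining relation \eqref{eq:def_t+-} gives the identity $\mathcal L(z_n)=\kappa+\tfrac12\big(t_+^{\kappa}(z_n)\big)^2-t_+^{\kappa}(z_n)\,Q(\dot z_n)$, so the scenario $Q(\dot z_n)\to-\infty$ with $\mathcal L(z_n)\sim -\,t_+^{\kappa}(z_n)Q(\dot z_n)\to+\infty$ and $t_+^{\kappa}(z_n)$ constant is perfectly consistent with the algebra (take $t_+^{\kappa}(z_n)\equiv 1$: then the radicand is $(Q(\dot z_n)-1)^2$ and everything checks out). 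Lemma~\ref{neqkappa} only gives strict positivity of the radicand, not an upper bound on $\mathcal E(z_n)$, so your ``cleaner route'' through a sublevel $\mathcal J_{p,q}^c$ and Remark~\ref{cbounded} at the single value $t=0$ cannot start. Your first route fails for the same reason: although $\mathcal L(z_n^{t_n})=\kappa$ with $t_n=t_+^{\kappa}(z_n)$, passing to a fixed target $\gamma(\bar t)$ with $t_n\to\bar t$ changes the action by $(\bar t-t_n)Q(\dot z_n)+O(1)$, which is uncontrolled precisely when $Q(\dot z_n)$ is unbounded --- the very thing to be proved --- and Remark~\ref{cbounded} gives no uniformity over the moving endpoints $\gamma(t_n)$.

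The paper's proof handles exactly this scenario, and does so by contradiction using pseudocoercivity at \emph{two} parameter values. Assuming \eqref{supQ} fails, Remark~\ref{cbounded} applied to $\mathcal J_{p,q}$ forces $\mathcal L(z_n)\to+\infty$, and then the formula for $t_+^{\kappa}$ forces $Q(\dot z_n)\to-\infty$ along a subsequence. With $|t_+^{\kappa}(z_n)|\le C$ one gets $2\mathcal L(z_n)\le C^2-2C\,Q(\dot z_n)+2\kappa$, and the decisive step is to push forward by $F^{\bar t}$ with a \emph{fixed} $\bar t>C$: by \eqref{eq:calH-splitting},
\begin{equation*}
	\mathcal L(z_n^{\bar t})=\mathcal L(z_n)+\bar t\,Q(\dot z_n)-\tfrac12\bar t^{\,2}\le \tfrac{C^2}{2}+(\bar t-C)\,Q(\dot z_n)+\kappa\to-\infty,
\end{equation*}
so the curves $z_n^{\bar t}$ fall into a fixed sublevel of $\mathcal J_{p,\gamma(\bar t)}$, Remark~\ref{cbounded} bounds $Q(\dot z_n^{\bar t})$, and \eqref{eq:dvLzt2} transports the bound back to $Q(\dot z_n)$, a contradiction. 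This is where the hypothesis that $\mathcal J_{p,\gamma(t)}$ is pseudocoercive for \emph{all} $t$ is genuinely used; any argument confined to $\mathcal N_{p,q}$ alone, as in your proposed shortcut, cannot succeed.
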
    
\begin{proof}
	Assume by contradiction that  $\sup_{m} |Q(\dot z_n)|=+\infty$.
	Since $q\in \gamma(\mathbb R)$ and 
	$\mathcal{J}_{p,\gamma(t)}$
	is pseudocoercive for all $t\in\mathbb{R}$,
	from Remark~\ref{cbounded}
	necessarily $\mathcal J_{p,q}(z_n)\to +\infty$.
	Since $t^\kappa_+(z_n)$ is bounded and   
	\begin{align}
		t_{+}^{\kappa}(z_n) &= Q(\dot z_n) + \sqrt{Q^2(\dot z_n) + 2\big( \mathcal{E}(z_n)- \kappa \big)}
		\nonumber \\
								  &=Q(\dot z_n) + \sqrt{Q^2(\dot z_n) + 2\big( \mathcal{L}(z_n)- \kappa \big)},\label{tzm}
	\end{align}
	we get that, up to pass to a subsequence, 
	\begin{equation}
		\label{Qdivneg} Q(\dot z_n)\to -\infty.
	\end{equation}
	Let $C\geq 0$ such that $|t^\kappa_+(z_n)|\leq C$, for all $m\in \mathbb N$.
	From \eqref{tzm} we then get
	\begin{equation}
		\label{2L}
		2\mathcal{L}(z_n) \leq C^2 -2 C\, Q(\dot z_n)+2\kappa.
	\end{equation}
	Take $\bar t>C$ and consider $z_n^{\bar t}\coloneqq F^{\bar t} (z_n)$.
	Recalling \eqref{eq:def-hk} and  \eqref{eq:calH-splitting},
	we then get from \eqref{2L}:
	\[
		\mathcal L(z_n^{\bar t})=
		\mathcal{L}(z_n) +\bar t Q(\dot z_n) - \frac{1}{2}\bar{t}\,^2\leq \frac{C^2} 2 +(\bar t -C)Q(\dot z_n)+\kappa\to -\infty.
	\]
	By Remark~\ref{cbounded}, we deduce that $\sup _m |Q(\dot z^{\bar t}_m)|<+\infty$.
	Then from \eqref{eq:dvLzt2},
	\[
		\sup _m |Q(\dot z_n)|<+\infty,
	\] in contradiction with \eqref{Qdivneg}.
\end{proof}
Let $z\in \mathcal N_{p,q}$ and $\zeta\in T_z\Omega_{p,q}(M)$;
we recall that the $H^1$-norm of $\zeta$ is given by
$\int_0^1g_z(\zeta', \zeta')\d s$
where $\zeta'$ denotes the covariant derivative of $\zeta$
along $z$ defined by the Levi-Civita connection of the auxiliary Riemannian metric $g$. 
\begin{lemma}
	\label{bounded}
	Let $(z_n)_n\subset \mathcal N_{p,q}$ be a bounded sequence
	$($w.r.t. the topology induced on $\mathcal N_{p,q}$
	by the topology of $\Omega_{p,q}(M)$$)$
	such that their images $z_n([0,1])$ are contained in a compact subset of $M$ and,
	for each $n\in \mathbb N$,
	let $\zeta_n\in T_{z_n}\Omega_{p,q}(M)$.
	If 
	\[
		\sup_{n} \int_0^1g_{z_n}(\zeta_n', \zeta_n')\d s<+\infty,
	\]                                                    
	then there exist bounded sequences
	$\xi_n\in T_{z_n}\mathcal N_{p,q}$ and
	$\mu_n\in H^1_0([0,1],\mathbb{R})$
	such that $\zeta_n=\xi_n + \mu_n K(z_n)$.
\end{lemma}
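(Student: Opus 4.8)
The plan is to use the splitting of the tangent space from Proposition~\ref{prop:N_pq-characterization}, namely
\[
	T_{z_n}\Omega_{p,q}(M) = T_{z_n}\mathcal{N}_{p,q} + \mathcal{W}_{z_n},
\]
to decompose each $\zeta_n$ as $\zeta_n = \xi_n + \mu_n K(z_n)$ with $\xi_n \in T_{z_n}\mathcal{N}_{p,q}$ and $\mu_n \in H^1_0([0,1],\mathbb{R})$. The pointwise existence of such a decomposition is guaranteed by \eqref{eq:TzOmega-splitting}; the real content of the lemma is that the two pieces can be chosen to remain \emph{bounded} uniformly in $n$, given the uniform $H^1$-bound on $\zeta_n$, the boundedness of $(z_n)$ in $\mathcal{N}_{p,q}$, and the fact that all images lie in a fixed compact set $\mathcal{K} \subset M$.

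First I would make the splitting explicit. Using the characterization \eqref{eq:TzNpq}, a vector $\xi \in T_z\Omega_{p,q}(M)$ lies in $T_z\mathcal{N}_{p,q}$ exactly when $\partial_xQ(\dot z)[\xi] + Q(\dot\xi)$ is constant a.e.\ on $[0,1]$. Writing $\zeta_n = \xi_n + \mu_n K(z_n)$ and imposing $\xi_n \in T_{z_n}\mathcal{N}_{p,q}$, I would compute $Q(\dot\xi_n)$ and $\partial_xQ(\dot z_n)[\xi_n]$ in terms of $\zeta_n$, $\mu_n$ and the derivatives of $Q$ and $K$ along $z_n$; using $Q(K) \equiv -1$ this yields a first-order linear ODE for $\mu_n$ of the schematic form $\dot\mu_n = A_n(s)\mu_n + B_n(s)$, where the coefficients $A_n, B_n$ are built from $\dot z_n$, $\dot\zeta_n$, $K(z_n)$, $\partial_x Q$, $\partial_x K$ evaluated along $z_n$, together with the (unknown but determined) constant that the membership condition \eqref{eq:TzNpq} forces. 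The boundary conditions $\mu_n(0) = \mu_n(1) = 0$ (so that $\mu_n K(z_n) \in \mathcal{W}_{z_n}$ and $\xi_n$ vanishes at the endpoints) together with the constancy requirement pin down both $\mu_n$ and the constant uniquely, which is precisely why \eqref{eq:TzOmega-splitting} is a splitting and the decomposition is well defined.

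The crux is the uniform bound. The coefficients $A_n, B_n$ are continuous functions of $z_n$ and $\dot z_n$ (through $\partial_x Q$, $\partial_x K$, $K$ and their first derivatives), and since the images $z_n([0,1])$ lie in the fixed compact set $\mathcal{K}$, the sup-norms of $\partial_x Q$, $K$, $\partial_x K$ along the curves are uniformly bounded by compactness; the $L^2$-norms of $\dot z_n$ and $\dot\zeta_n$ are uniformly bounded by the hypotheses (boundedness of $(z_n)$ in $\mathcal{N}_{p,q}$ and the uniform $H^1$-bound on $\zeta_n$). One also uses that $K$ is bounded below away from zero on $\mathcal{K}$ with respect to the auxiliary metric $g$, so dividing by the $K$-component to solve for $\mu_n$ is harmless. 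I would then invoke Gr\"onwall's inequality on the linear ODE to bound $\norm{\mu_n}_{L^\infty}$ and hence, via the ODE itself, $\norm{\dot\mu_n}_{L^2}$, uniformly in $n$; the determined constant is controlled by the same estimates. This gives a uniform $H^1$-bound on $\mu_n$, hence $\mu_n K(z_n)$ is bounded in $T_{z_n}\Omega_{p,q}(M)$, and finally $\xi_n = \zeta_n - \mu_n K(z_n)$ is bounded as the difference of two bounded sequences, with $\xi_n \in T_{z_n}\mathcal{N}_{p,q}$ by construction.

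The main obstacle I anticipate is obtaining the Gr\"onwall estimate in a form that is genuinely uniform in $n$: one must ensure that the coefficient $A_n$ is controlled in $L^1$ (not just $L^2$) and that the constant forced by \eqref{eq:TzNpq} does not blow up. Both are handled by the compactness of $\mathcal{K}$, which bounds the geometric quantities, and by Cauchy--Schwarz to pass from the $L^2$-bounds on $\dot z_n$ and $\dot\zeta_n$ to the required $L^1$-control; care is needed because $A_n$ involves products of $\dot z_n$ with bounded geometric terms, so its $L^1$-norm is controlled by $\norm{\dot z_n}_{L^2}$ times a uniform constant. Once the ODE is set up and these estimates are in place, the conclusion is routine.
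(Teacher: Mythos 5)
Your proposal follows essentially the same route as the paper: decompose $\zeta_n=\xi_n+\mu_n K(z_n)$, use the characterization \eqref{eq:TzNpq} together with $Q(K)\equiv-1$ to turn the membership condition $\xi_n\in T_{z_n}\mathcal N_{p,q}$ into a first-order linear ODE for $\mu_n$ with $\mu_n(0)=\mu_n(1)=0$ (the boundary conditions fixing the constant $c_n$), and then bound $\mu_n$ in $H^1_0$ uniformly via the explicit integrating-factor solution, compactness of the image to control $Q$, $K$ and their derivatives, and Cauchy--Schwarz to pass from $L^2$ to $L^1$ control of the coefficients. The only cosmetic difference is that you invoke Gr\"onwall where the paper writes the solution explicitly (equivalent for a linear scalar ODE), and your remark about $\|K\|_g$ being bounded below is unnecessary since the normalization $Q(K)=-1$ already makes the coefficient of $\mu_n'$ equal to one.
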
	
\begin{proof}
	Since 
	the images of the curves $z_n$ are contained in a compact subset $W$ of $M$, 
	we can assume that the field $K$
	is bounded and the covariant derivatives
	of the fields $K(z_n)$ along $z_n$
	are uniformly bounded in the $L^2$-norm.
	Thus, it suffices to show that there exists a  bounded
	sequence $(\mu_n)_{n} \subset H^1_{0}([0,1],\mathbb{R})$
	such that 
	\[
		\xi_n \coloneqq \zeta_n - \mu_n K(z_n) \in T_{z_n}\mathcal{N}_{p,q},
		\quad \forall n \in \mathbb{N}.
	\]
	By~\eqref{eq:TzNpq}, we need to prove 
	that, for each $n\in \mathbb{N}$,
	there exists $c_n \in \mathbb{R}$ such that 
	\[
		\partial_xQ(\dot{z}_n)[\xi_n] + Q(\dot{\xi}_{n}) = c_n, \text{ a.e.},
	\]
	so we need to solve, with respect to $c_n\in \mathbb{R}$
	and $\mu_n \in H^1_0([0,1],\mathbb{R})$,
	the following ODE:
	\begin{equation}
		\label{eq:bounded-proof1}
		\partial_xQ(\dot{z}_{n})[\zeta_n]
		+ Q(\dot{\zeta}_{n})
		- \mu_n \big(\partial_xQ(\dot{z}_n)[K(z_n)] + Q(\dot{K}_n)\big)
		+ \mu_n' = c_n,
	\end{equation}
	where,
	$\dot{K}_n$ denotes
	$\frac{\partial K^i}{\partial x^j}(z_n(s))\dot z_n^j(s)\frac{\partial}{\partial x^i}|_{z_n(s)}$.
	Let us re-write \eqref{eq:bounded-proof1} as
	\begin{equation}
		\label{eq:Npqmanifold-ODE}
		\mu_n'(s) - a_n(s)\mu_n(s) = b_n(s),
	\end{equation}
	where
	\begin{align*}
		a_n(s) &= \partial_{x}Q(\dot z_n)[K(z_n)] +Q(\dot K_n)\\
		\intertext{and}
		b_n(s) &= c_n-h_n(s),
		\quad\quad
		h_n(s)\coloneqq \partial_xQ(\dot{z}_{n})[\zeta_{n}] 
		+Q(\dot{\zeta}_{n})
	\end{align*}
	Setting
	$
	A_n(s)=\int_{0}^{s}a_n(\tau)\d \tau,
	$
	and
	\[
		c_n =  \left(\int_{0}^{1}
			e^{-A_n(s)}\d s
		\right)^{-1}
		\left(\int_{0}^{1}
			e^{A_n(s)}h_n(s)\d s
		\right),
	\]
	a solution of \eqref{eq:Npqmanifold-ODE}
	which satisfies the boundary conditions 
	$\mu_n(0) = \mu_n(1) = 0$ is given by
	\begin{equation*}
		\mu_n(s)=e^{A_n(s)}\int_{0}^{s}b_n(\tau )e^{-A_n(\tau)}\d \tau.
	\end{equation*}
	We notice that the sequence $A_n(s)\colon[0,1]\to \mathbb R$
	is uniformly bounded in $L^\infty$ since
	\[
		|A_n(s)|\leq \int_0^1|a_n|\d s
		\leq C_1\int_0^1\sqrt{g(\dot z_n,\dot z_n)}\d s,
	\]
	where $C_1$ is a positive constant depending on the maxima
	of the absolute values of the components of $Q$ and $K$
	and their derivatives,
	in each  coordinate system used to cover $W$,
	and on a constant that bounds from above the Euclidean norm
	with the norm associated with $g$ in each of the same of coordinate system.
	This implies that the sequence of functions $e^{\pm A_n(s)}$
	is also uniformly bounded in $L^\infty$ and then
	$\left(\int_{0}^{1} e^{-A_n(s)}\d s	\right)^{-1}$ is bounded as well.
	Analogously, 
	\[
		|h_n(s)|\leq C_2\sqrt{g(\dot z_n,\dot z_n)},
	\]
	where now $C_2\geq 0$ is independent of $K$
	but depend on an upper bound for the $L^\infty$-norms of the fields $\zeta_n$.
	Hence $c_n$  is bounded  and $b_n$ satisfies then  
	\[
		|b_n(s)|\leq C_3 +C_2\sqrt{g(\dot z_n,\dot z_n)},
	\]
	for some non-negative constant $C_3$. Since 
	\[
		\mu'_n(s)=a_n(s)e^{A_n(s)}\int_{0}^{s}b_n(\tau )e^{-A_n(\tau)}\d \tau
		+ b_n(s)
	\]
	we get 
	\[
		|\mu'_n(s)|\leq C_4|a_n(s)|+|b_n(s)|,
	\]
	for some non-negative constant $C_4$,
	depending also on an upper bound of the sequence
	$\int_0^1\sqrt{g(\dot z_n,\dot z_n)}\d s$.
	Hence, $\mu_n$ is bounded  in $H^1_0$-norm.			
\end{proof}	

\begin{lemma}\label{dt0suW}
	Let $z\in \mathcal N_{p,q}$ and $\eta\in \mathcal W_z$,
	then $\d t^\kappa_+(z)[\eta]=0$.
\end{lemma}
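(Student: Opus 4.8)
The plan is to reduce the statement to the single identity $\d\mathcal E(z)[\eta]=0$ and then verify the latter by an explicit first‑variation computation in which the constancy of the Noether charge on $\mathcal N_{p,q}$ does all the work. First I must interpret $\d t_+^\kappa(z)[\eta]$: a direction $\eta=\mu K(z)\in\mathcal W_z$ (with $\mu\in H^1_0$) is in general \emph{not} tangent to $\mathcal N_{p,q}$, so I regard $t_+^\kappa$ as the $C^1$ extension to a neighbourhood of $z$ in $\Omega_{p,q}(M)$ given by the larger root $t$ of $\mathcal E(F^t(\cdot))=\kappa$. This root exists and is $C^1$ near $\mathcal N_{p,q}$ because Lemma~\ref{neqkappa} makes the relevant discriminant strictly positive on $\mathcal N_{p,q}$, hence nearby, while $\partial_t\mathcal E(F^t(z))=Q(\dot z)-t\ne 0$ at $t=t_+^\kappa(z)$; by construction it restricts to~\eqref{eq:def_t+-} on $\mathcal N_{p,q}$. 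Moreover, integrating~\eqref{eq:diff-LandE} gives $\mathcal H_{p,q}(z,t)-\mathcal E(F^t(z))=\mathcal L(z)-\mathcal E(z)$ for \emph{all} $(z,t)$, so evaluating at $t=t_+^\kappa(z)$ yields $\mathcal H_{p,q}(z,t_+^\kappa(z))=\mathcal L(z)-\mathcal E(z)+\kappa$ on the whole neighbourhood, not merely on $\mathcal N_{p,q}$.

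Differentiating this identity at $z$ along $\eta$, exactly as in the derivation of~\eqref{eq:diff-H-in-t+}, produces
\[
\partial_z\mathcal H_{p,q}(z,t_+^\kappa(z))[\eta] + \partial_t\mathcal H_{p,q}(z,t_+^\kappa(z))\,\d t_+^\kappa(z)[\eta] = \d\mathcal L(z)[\eta] - \d\mathcal E(z)[\eta].
\]
Three of the four pieces are already under control. By Proposition~\ref{prop:restriction-Hpq} (equation~\eqref{eq:partialz-H-2} holds for every $\eta\in\mathcal W_z$ when $z\in\mathcal N_{p,q}$, and the argument there is independent of the second entry) one has $\partial_z\mathcal H_{p,q}(z,t_+^\kappa(z))[\eta]=0$; by the variational characterization~\eqref{eq:N_pq-characterization} of $\mathcal N_{p,q}$ one has $\d\mathcal L(z)[\eta]=0$; and by~\eqref{eq:computatoin-partial_tH} together with Corollary~\ref{cor:partial_tH-notnull}, $\partial_t\mathcal H_{p,q}(z,t_+^\kappa(z))=-\sqrt{Q^2(\dot z)+2(\mathcal E(z)-\kappa)}\ne 0$. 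Substituting, I am reduced to $\d t_+^\kappa(z)[\eta]=\d\mathcal E(z)[\eta]/\sqrt{Q^2(\dot z)+2(\mathcal E(z)-\kappa)}$, so it suffices to prove $\d\mathcal E(z)[\eta]=0$.

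The genuinely new step is this last vanishing. Since $E$ is invariant under the flow of $K$ (in the local chart~\eqref{eq:E-in-charts} it carries no $t$‑dependence), the infinitesimal invariance $\partial_xE(z,\dot z)[K]+\partial_vE(z,\dot z)[\dot K]=0$ holds as in~\eqref{eq:Killing-LocCoord}, where $\dot K$ is as in Lemma~\ref{bounded}; and $\partial_vE(z,\dot z)[K]=Q(\dot z)$, read off from~\eqref{eq:E-in-charts} and~\eqref{eq:Q-in-charts} since $\partial_\tau E=\omega_y(\nu)-\tau=Q$. Writing $\eta=\mu K(z)$ and $\dot\eta=\dot\mu K(z)+\mu\dot K$ and expanding $\d\mathcal E(z)[\eta]=\int_0^1\big(\partial_xE(z,\dot z)[\eta]+\partial_vE(z,\dot z)[\dot\eta]\big)\,\d s$, the coefficient of $\mu$ is $\partial_xE[K]+\partial_vE[\dot K]=0$ and the coefficient of $\dot\mu$ is $\partial_vE[K]=Q(\dot z)$, so $\d\mathcal E(z)[\eta]=\int_0^1 Q(\dot z)\,\dot\mu\,\d s$. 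On $\mathcal N_{p,q}$ the function $Q(\dot z)$ is constant, whence this integral equals $Q(\dot z)\,(\mu(1)-\mu(0))=0$ because $\mu\in H^1_0$, and the claim follows.

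I expect the main difficulty to be conceptual rather than computational: giving $\d t_+^\kappa(z)[\eta]$ a meaning for directions $\eta\in\mathcal W_z$ that leave $\mathcal N_{p,q}$ (handled by the $C^1$ extension and the globally valid form of the identity above), and recognizing that, once the three standard terms are eliminated through Proposition~\ref{prop:restriction-Hpq} and~\eqref{eq:N_pq-characterization}, the entire statement rests on the formula $\d\mathcal E(z)[\eta]=\int_0^1 Q(\dot z)\dot\mu\,\d s$ combined with the constancy of $Q(\dot z)$ on $\mathcal N_{p,q}$ and the vanishing endpoints of $\mu$.
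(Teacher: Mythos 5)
Your proposal is correct, and its skeleton coincides with the paper's: differentiate the identity $\mathcal H_{p,q}(z,t_+^{\kappa}(z))=\mathcal L(z)-\mathcal E(z)+\kappa$ to obtain \eqref{eq:diff-H-in-t+}, kill $\partial_z\mathcal H_{p,q}(z,t_+^{\kappa}(z))[\eta]$ by the argument in the proof of Proposition~\ref{prop:restriction-Hpq}, and divide by $\partial_t\mathcal H_{p,q}(z,t_+^{\kappa}(z))\neq 0$ via Corollary~\ref{cor:partial_tH-notnull}. Where you genuinely diverge is in disposing of the right-hand side $\d\mathcal L(z)[\eta]-\d\mathcal E(z)[\eta]$: the paper simply observes that in Section~\ref{Section:ex&mult} the Lagrangian is $2$-homogeneous, so $\mathcal L=\mathcal E$ and this term is identically zero, whereas you prove $\d\mathcal L(z)[\eta]=0$ from the characterization \eqref{eq:N_pq-characterization} of $\mathcal N_{p,q}$ and, separately, $\d\mathcal E(z)[\eta]=0$ by a first-variation computation resting on the $K^c$-invariance of $E$, the identity $\partial_vE(z,\dot z)[K]=Q(\dot z)$, the constancy of $Q(\dot z)$ on $\mathcal N_{p,q}$, and $\mu\in H^1_0$. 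That computation is correct and buys generality: it would establish the lemma without the standing Lorentz--Finsler hypothesis, in any setting where $E$ is fiberwise differentiable so that $\d\mathcal E$ has the usual integral expression (cf.\ Remark~\ref{rem:energy-noFunctional}); in the actual context of the lemma it is redundant, since $E=L$ there and your $\d\mathcal E(z)[\eta]=0$ is literally $\d\mathcal L(z)[\eta]=0$ again. You are also more careful than the paper on one point worth noting: since $\mathcal W_z\not\subset T_z\mathcal N_{p,q}$, the quantity $\d t_+^{\kappa}(z)[\eta]$ only makes sense after extending $t_+^{\kappa}$ to a neighbourhood of $\mathcal N_{p,q}$ in $\Omega_{p,q}(M)$ (replacing the constant $Q(\dot z)$ by the functional $\mathcal Q$), together with the observation that the identity $\mathcal H_{p,q}(z,t)-\mathcal E(F^t(z))=\mathcal L(z)-\mathcal E(z)$ holds on all of $\Omega_{p,q}(M)\times\mathbb R$; the paper uses this extension implicitly (it is the one differentiated in the Palais--Smale argument), and your making it explicit closes a small gap in the exposition rather than opening one.
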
	
\begin{proof}
	From \eqref{eq:diff-H-in-t+}, since $\mathcal L=\mathcal E$, we get
	\[
		\partial_z\mathcal{H}_{p,q}(z,t_+^{\kappa}(z))
		+ \partial_t\mathcal{H}_{p,q}(z,t_+^{\kappa}(z))\, \d t_+^{\kappa}(z)=0.
	\]
	As showed in the proof of Proposition~\ref{prop:restriction-Hpq},
	$\partial_z\mathcal{H}_{p,q}(z,t_+^{\kappa}(z))[\eta]=0$,
	and since, from Corollary~\ref{cor:partial_tH-notnull},
	$\partial_t\mathcal{H}_{p,q}(z,t_+^{\kappa}(z))\neq 0$,
	we get the thesis.
\end{proof}

We are now ready to prove the Palais-Smale condition for $t^\kappa_+$.
We recall that a $C^1$ functional $f\colon \mathcal M\to \mathbb R$,
defined on a manifold $\mathcal M$,
satisfies the Palais-Smale condition if every sequence $z_n\subset \mathcal M$
such that $f(z_n)$ is bounded and $\d f(z_n)\to 0$,
admits a converging subsequence.
\begin{proposition}
	Under the assumptions in Lemma~\ref{Qbounded},
	$t^\kappa_+\colon\mathcal N_{p,q}\to \mathbb R$
	satisfies the Palais-Smale condition. 
\end{proposition}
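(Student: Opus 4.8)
The plan is to verify the Palais--Smale condition in three stages: first extract a $C^0$-convergent subsequence of base curves using pseudocoercivity, then boost this to $H^1$-boundedness via the fiberwise strong convexity of $L_c$, and finally upgrade weak convergence to strong $H^1$-convergence through the monotonicity inequality \eqref{eq:monotone}. Let $(z_n)\subset\mathcal N_{p,q}$ satisfy $t^\kappa_+(z_n)$ bounded and $\d t^\kappa_+(z_n)\to 0$, and set $t_n\coloneqq t^\kappa_+(z_n)$. Since $L$ is $2$-homogeneous we have $\mathcal L=\mathcal E$, so from \eqref{tzm} the boundedness of $t_n$ together with Lemma~\ref{Qbounded} (which yields $\sup_n|Q(\dot z_n)|<+\infty$) forces the radicand in \eqref{eq:def_t+-} to be bounded above; hence $\mathcal J_{p,q}(z_n)=\mathcal L(z_n)=\mathcal E(z_n)$ is bounded. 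By pseudocoercivity of $\mathcal J_{p,q}$, a subsequence converges uniformly to some $z$, whose image, together with those of the $z_n$, lies in a fixed compact set $W\subset M$.

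Next I would establish $H^1$-boundedness. As recalled in Remark~\ref{classicalFinsler}, $L_c$ is non-negative, fiberwise strongly convex and $2$-homogeneous, so $L_c(x,0)=0$ and $\partial_vL_c(x,0)=0$; integrating \eqref{eq:monotone} from $0$ to $v$ gives $L_c(x,v)\ge\tfrac12\lambda(x)\norm{v}^2$, and on the compact set $W$ one has $\lambda\ge\lambda_0>0$. Since $\int_0^1 L_c(z_n,\dot z_n)\,\d s=\mathcal L(z_n)+Q^2(\dot z_n)$ is bounded above, $\int_0^1\norm{\dot z_n}^2\,\d s$ is bounded, so $(z_n)$ is bounded in $H^1$. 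Now I transfer the smallness of $\d t^\kappa_+(z_n)$ to the action. Differentiating \eqref{eq:H-in-t+} (with $\mathcal L=\mathcal E$) yields, for $\xi\in T_z\mathcal N_{p,q}$, the identity $\partial_z\mathcal H_{p,q}(z,t^\kappa_+(z))[\xi]=\sqrt{Q^2(\dot z)+2(\mathcal E(z)-\kappa)}\,\d t^\kappa_+(z)[\xi]$, while $\partial_z\mathcal H_{p,q}(z,t^\kappa_+(z))$ vanishes on $\mathcal W_z$ (see the proof of Proposition~\ref{prop:restriction-Hpq}). Given any $\zeta_n\in T_{z_n}\Omega_{p,q}(M)$ with $\norm{\zeta_n}_{H^1}\le 1$, Lemma~\ref{bounded} splits $\zeta_n=\xi_n+\mu_nK(z_n)$ with $\xi_n\in T_{z_n}\mathcal N_{p,q}$ and $\mu_nK(z_n)\in\mathcal W_{z_n}$ both bounded in $H^1$; evaluating $\partial_z\mathcal H_{p,q}(z_n,t_n)$ on each summand and using that the square-root factor is bounded, I conclude $\norm{\partial_z\mathcal H_{p,q}(z_n,t_n)}_{\ast}\le C\,\norm{\d t^\kappa_+(z_n)}_{\ast}\to 0$.

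The crux is to deduce $H^1$-convergence from $\partial_z\mathcal H_{p,q}(z_n,t_n)\to 0$, where (up to a subsequence) $t_n\to\bar t$ and $\dot z_n\rightharpoonup\dot z$ weakly in $L^2$. Working with a finite subdivision of $[0,1]$ subordinate to charts covering $W$, let $\xi_n$ be the tangent field along $z_n$ with coordinates $z-z_n$, so $\xi_n\in T_{z_n}\Omega_{p,q}(M)$, $\xi_n\to 0$ uniformly, $\dot\xi_n=\dot z-\dot z_n$, and $\norm{\xi_n}_{H^1}$ is bounded. Expanding $\partial_z\mathcal H_{p,q}(z_n,t_n)[\xi_n]$ from \eqref{eq:hk}, the summands carrying $\xi_n$ itself vanish in the limit by the growth bounds \eqref{eq:partialxpinched}--\eqref{eq:partialvpinched} and $\norm{\xi_n}_\infty\to 0$, leaving $\int_0^1\partial_vL(z_n,\dot z_n+t_nK(z_n))[\dot z-\dot z_n]\,\d s\to 0$. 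Subtracting the frozen-velocity integral $\int_0^1\partial_vL(z_n,\dot z+t_nK(z_n))[\dot z-\dot z_n]\,\d s$, which tends to $0$ because $\dot z-\dot z_n\rightharpoonup 0$ while $\partial_vL(z_n,\dot z+t_nK(z_n))$ converges strongly in $L^2$, and using $\partial_vL_c=\partial_vL+2Q\otimes Q$, I obtain $\int_0^1\big(\partial_vL_c(z_n,v_n^2)-\partial_vL_c(z_n,v_n^1)\big)[\dot z_n-\dot z]\,\d s\to 2\delta_n^2$, where $v_n^2=\dot z_n+t_nK(z_n)$, $v_n^1=\dot z+t_nK(z_n)$ and $\delta_n\coloneqq Q(\dot z_n)-Q(\dot z)$. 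Since $Q(\dot z_n)$ is constant along $z_n$, it equals $\int_0^1 Q(\dot z_n)\,\d s$, which converges to $\int_0^1 Q(\dot z)\,\d s=Q(\dot z)$ by weak convergence of $\dot z_n$ and uniform convergence of the coefficients of $Q$; hence $\delta_n\to 0$. Applying \eqref{eq:monotone} and $\lambda\ge\lambda_0$ on $W$ gives $\lambda_0\norm{\dot z_n-\dot z}_{L^2}^2\le o(1)$, so $z_n\to z$ in $H^1$; as $\mathcal N_{p,q}$ is closed (Proposition~\ref{prop:N_pq-characterization}), $z\in\mathcal N_{p,q}$.

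The main obstacle is this last upgrade: because $L$ is \emph{indefinite}, neither coercivity nor a gradient estimate is available for $L$ directly, and everything must be routed through the auxiliary strongly convex Lagrangian $L_c$. The cost is the correction term $2\delta_n^2$ produced by the quadratic piece $Q^2$, and the argument closes precisely because the Noether charge $Q(\dot z_n)$ is constant along each curve and converges to $Q(\dot z)$ under weak convergence, rendering this correction negligible. Controlling it, together with the uniform boundedness of the splitting furnished by Lemma~\ref{bounded} and the transfer identity from Lemma~\ref{dt0suW} and Corollary~\ref{cor:partial_tH-notnull}, is the delicate part of the proof.
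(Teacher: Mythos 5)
Your proof follows the same skeleton as the paper's: Lemma~\ref{Qbounded} plus \eqref{tzm} bound $\mathcal L(z_n)$, pseudocoercivity gives uniform convergence and confinement to a compact set $W$, the bound $L_c\geq \alpha\, g$ on $W$ gives $H^1$-boundedness, Lemmas~\ref{bounded} and~\ref{dt0suW} upgrade $\d t^\kappa_+(z_n)\to 0$ to smallness on all of $T_{z_n}\Omega_{p,q}(M)$, and the strong convexity of $L_c$ is used to pass from weak to strong convergence. All of that is sound. The gap is in the very last step, in your handling of the correction term produced by $Q^2$. Writing $c_n\coloneqq Q(\dot z_n)$ (a constant for each $n$), the quantity you call $\delta_n$ is in fact the \emph{function} $s\mapsto c_n-Q(\dot z(s))$, and your correction term is $2\int_0^1\big(c_n-Q(\dot z(s))\big)^2\,\d s$. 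Weak convergence only yields $c_n\to\int_0^1 Q(\dot z)\,\d s$; it does not make this integral vanish, since in the limit it equals $2\int_0^1\big(\int_0^1Q(\dot z)\,\d\sigma-Q(\dot z(s))\big)^2\d s$, which is zero if and only if $Q(\dot z)$ is a.e.\ constant --- i.e.\ $z\in\mathcal N_{p,q}$, which is exactly what can only be concluded \emph{after} strong convergence. Your identity ``$\int_0^1 Q(\dot z)\,\d s=Q(\dot z)$'' silently assumes the conclusion. Nor can the term be absorbed: bounding it by $2\sup_W\|Q\|^2\,\|\dot z_n-\dot z\|^2_{L^2}$ gives $\lambda_0\|\dot z_n-\dot z\|^2_{L^2}\leq 2\sup_W\|Q\|^2\,\|\dot z_n-\dot z\|^2_{L^2}+o(1)$, which is vacuous unless $2\|Q\|^2<\lambda_0$ pointwise --- a condition of the type \eqref{strong} that is \emph{not} assumed here.

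The repair is a reordering of your own manipulations: convert $\partial_vL$ into $\partial_vL_c$ \emph{before} subtracting the frozen-velocity integral. Since $Q\big(\dot z_n+t_nK(z_n)\big)=c_n-t_n$ is a bounded constant, the cross term in $\int_0^1\partial_vL(z_n,\dot z_n+t_nK(z_n))[\dot z-\dot z_n]\,\d s$ becomes $-2(c_n-t_n)\int_0^1 Q(\dot z-\dot z_n)\,\d s$, which is \emph{linear} in $\dot z-\dot z_n$ and tends to $0$ by weak convergence together with the uniform convergence of the coefficients of $Q$. One is then left with $\int_0^1\partial_vL_c(z_n,\dot z_n+t_nK(z_n))[\dot z-\dot z_n]\,\d s\to 0$, after which your frozen-velocity subtraction and \eqref{eq:monotone} close the argument with no quadratic remainder. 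This is in substance what the paper does: it writes $\mathcal S_s(z_n)\,\d t^\kappa_+(z_n)=\big(\mathcal S_s(z_n)-\mathcal Q_s(z_n)\big)\,\d\mathcal Q_s(z_n)+\d\mathcal L_{cs}(z_n)$, pairs the bounded scalar $\mathcal S_s(z_n)-\mathcal Q_s(z_n)$ with $\d\mathcal Q_s(z_n)[z_n-z]\to 0$, and concludes $\d\mathcal L_{cs}(z_n)[z_n-z]\to 0$, so that only the convexity of $L_{cs}$ is needed at the end.
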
	
\begin{proof}
	Let $(z_n)_n\subset \mathcal N_{p,q}$  and $C\geq 0$ such that
	$|t^\kappa_+(z_n)|\leq C$ and $\d t^\kappa_+(z_n)\to 0$.
	From Lemma~\ref{Qbounded}, we have that \eqref{supQ} holds.
	Hence, from \eqref{tzm} we deduce that $\mathcal L(z_n)$
	is bounded from above. 
	By the pseudocoercivity assumption, there exists then a subsequence,
	still denoted by $z_n$,
	which uniformly converge to a continuous curve $z\colon[0,1]\to M$
	connecting $p$ to $q$.
	Thus, the curves $z_n$                                 
	are contained in a compact subset $W$ of $M$. 
	Hence, from Remark~\ref{classicalFinsler} there exists
	a positive constant $\alpha$, depending on $W$,
	such that
	$L_c(x,v)\geq \alpha  g(v,v)$,
	for all $x\in W$ and $v\in T_x M$.
	Let  $\mathcal L_c$ denote  the action functional of $L_c$ and 
	\[
		\mathcal S(z)\coloneqq \sqrt{Q^2(\dot z)
		+ 2\big( \mathcal{L}(z)- \kappa \big)}
		=\sqrt{2\big( \mathcal{L}_c(z)- \kappa \big)-Q^2(\dot z)}.
	\]
	Since  $Q(\dot{z}_n)$ is bounded,
	$\mathcal{L}(z_n)$ is  bounded from above and
	\begin{equation}
		\label{H1}
		\alpha \int_0^1g(\dot z_n,\dot z_n)\leq \mathcal L_c(z_n)
		=\mathcal L(z_n)+Q^2(\dot z_n),
	\end{equation}
	we deduce that  $\mathcal S(z_n)$ and
	$\int_0^1g(\dot z_n,\dot z_n)\d s$ are bounded as well. 
	Moreover, for $z\in \mathcal N_{p,q}$,
	let us see $Q(\dot z)$ as a functional $\mathcal{Q}$
	on $\mathcal{N}_{p,q}$
	(recall that $Q(\dot z)$ is constant a.e. on $[0,1]$). 

	Let  then  $\zeta_n\in T_{z_n}\Omega^{1,2}_{p,q}(M)$ be a bounded sequence;
	from Lemma~\ref{bounded} 
	there exist two  bounded sequences $\xi_n \in T_{z_n}\mathcal{N}_{p,q}$ 
	and $\mu_n \in H^1_0([0,1],\mathbb{R})$
	such that $\zeta_n = \xi_n + \mu_n K_{z_n}$.
	As $z_n$ is a Palais-Smale sequence, from Lemma~\ref{dt0suW}
	we  obtain 
	\[
		\d t^\kappa_+(z_n)[\zeta_n]
		=\d t^\kappa_+(z_n)[\xi_n]+\d t^\kappa_+(z_n)[\mu_nK_{z_n}]
		= \d t^\kappa_+(z_n)[\xi_n]\to 0.
	\] 	
	We  now apply a localization argument
	as in~\cite{Abbondandolo2009}
	(see also the proof of \cite[Theorem 5.6]{caponio2023-calcvar}).
	Thus, we can assume that
	$L$ is defined on $[0,1]\times U \times \mathbb R^{m+1}$,
	with $U$ an open neighbourhood of $0$ in $\mathbb R^{m+1}$.
	Analogously, we associate to   $L_c$ and $Q$, 
	a time-dependent fiberwise strongly convex Lagrangian $L_{cs}$ in $U$
	and a $C^1$ family of linear forms $Q_s$.
	Moreover, we can
	identify $(z_n)_n$ with a sequence in the Sobolev space
	$H^1([0,1],U)$.
	By \eqref{H1}, taking into account that the
	curves $z_n$ have fixed end-points,
	we get that $(z_n)_n$ is bounded in
	$H^1([0,1],U)$ and so it admits a subsequence,
	still denoted by $(z_n)_n$,
	which weakly and uniformly converges to a curve
	$z\in H^1([0,1],\mathbb R^{m+1})$
	which also satisfies the same
	fixed end-points boundary conditions. 
	The differential at $z_n$ of the localized functional obtained, that we still denote with $t_+^\kappa$,  is given by
	\[
		\d t^\kappa_+(z_n)
		=\d \mathcal{Q}_s(z_n)+\frac{\d \mathcal L_{cs}(z_n)
		-\mathcal{Q}_s(z_n)\d \mathcal{Q}_s(z_n)}{\mathcal S_s(z_n)},
	\]
	where the index $s$ is used to denote the localized functionals.
	Since $\mathcal S_s(z_n)$ is bounded we get
	\[
		0 \leftarrow \mathcal S_s(z_n)\d t^\kappa_+(z_n)
		=\big (\mathcal S_s(z_n)-\mathcal{Q}_s(z_n)\big)\d \mathcal{Q}_s(z_n)
		+\d \mathcal L_{cs}(z_n).
	\]
	In particular, since $z_n-z$ is bounded in $H^1_0$, we obtain
	\begin{equation}
		\label{to0}
		\big (\mathcal S_s(z_n)-\mathcal{Q}_s(z_n)\big) \d \mathcal{Q}_s(z_n)[z_n-z]
		+\d \mathcal L_{cs}(z_n)[z_n-z]\to 0.
	\end{equation}
	Since $z_n\to z$ uniformly and weakly, we deduce that $\d \mathcal{Q}_s(z_n)[z_n-z]\to 0$.
	As $\mathcal S_s(z_n)-\mathcal{Q}_s(z_n)$ is bounded then 
	$\big (\mathcal S_s(z_n)-\mathcal{Q}_s(z_n)\big)\d \mathcal{Q}_s(z_n)[z_n-z]\to 0$ as well. 
	From \eqref{to0}, we then get  $\d \mathcal L_{cs}(z_n)[z_n-z]\to 0$.
	We can then conclude that $z_n\to z$ in $H^1$-norm thanks to the convexity
	of $L_{cs}$ as in the proof of \cite[Theorem 5.6]{caponio2023-calcvar}.
	There exists then a subsequence $z_{n_k}$ such that
	$\dot z_{n_k}\to \dot z$, a.e. on $[0,1]$. 
	As $Q(\dot z_{n_k})=c_k$ a.e., for some $c_k\in \mathbb R$,
	we get that also $Q(\dot z)$ is constant a.e.,
	i.e. $z\in \mathcal N_{p,q}$.
\end{proof}

\begin{lemma}
	\label{lem:t_+boundedfrombelow}
	Under the assumptions of Lemma~\ref{Qbounded}, the functional
	$t^{\kappa}_+\colon \mathcal{N}_{p,q} \to \mathbb{R}$
	is bounded from below.
\end{lemma}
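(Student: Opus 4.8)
The plan is to argue by contradiction, using crucially the $2$-homogeneity of $L$ (so that $\mathcal{E}=\mathcal{L}$, recall Remark~\ref{2hom}) together with the pseudocoercivity of $\mathcal{J}_{p,q}=\mathcal{J}_{p,\gamma(0)}$ accessed through Remark~\ref{cbounded}. Suppose there were a sequence $(z_n)_n\subset\mathcal{N}_{p,q}$ with $t_+^\kappa(z_n)\to-\infty$, and set $t_n\coloneqq t_+^\kappa(z_n)$, $q_n\coloneqq Q(\dot z_n)$. The first, elementary observation is that the square-root summand in \eqref{eq:def_t+-} is nonnegative, so $t_n=q_n+\sqrt{Q^2(\dot z_n)+2(\mathcal{E}(z_n)-\kappa)}\ge q_n$; hence $q_n\le t_n\to-\infty$, and in particular $q_n\to-\infty$ too.

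The heart of the argument is to show that $t_n\to-\infty$ actually \emph{forces} $\mathcal{L}(z_n)\to-\infty$. By Proposition~\ref{prop:def-t_+-} (combined with \eqref{eq:E-zt}) the value $t_n$ is a root of $\mathcal{E}(z_n)+t\,Q(\dot z_n)-\tfrac12 t^2=\kappa$, so that $\mathcal{E}(z_n)=\kappa+\tfrac12 t_n^2-t_nq_n$; since $L$ is $2$-homogeneous this is also the value of $\mathcal{L}(z_n)$. For $n$ large both $t_n$ and $q_n$ are negative with $q_n\le t_n<0$, and multiplying the inequality $q_n\le t_n$ by the negative number $t_n$ reverses it to give $t_nq_n\ge t_n^2$. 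Therefore
\[
\mathcal{L}(z_n)=\kappa+\tfrac12 t_n^2-t_nq_n\le \kappa-\tfrac12 t_n^2\longrightarrow-\infty.
\]

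It then remains to close the loop at the fixed base point $q=\gamma(0)$. Since $\mathcal{J}_{p,q}(z_n)=\mathcal{L}(z_n)\to-\infty$, for all large $n$ the curves $z_n$ lie in a single fixed sublevel $\mathcal{J}^{c}_{p,q}$ (one may take $c=0$). The assumed pseudocoercivity of $\mathcal{J}_{p,q}$, through Remark~\ref{cbounded}, then yields $\sup_{z\in\mathcal{J}^{c}_{p,q}}|Q(\dot z)|<+\infty$, so that $|q_n|$ is bounded for large $n$. This contradicts $q_n\to-\infty$ from the first step, and the lemma follows.

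I expect the only genuinely delicate point to be the sign bookkeeping in the middle paragraph. The naive expectation is that sending the arrival time to $-\infty$ should inflate the action $\mathcal{L}(z_n)$, whereas the decisive (and opposite) conclusion is that $\mathcal{L}(z_n)\to-\infty$; it is exactly this sign that makes the sublevel argument work at the \emph{fixed} endpoint $q$, where pseudocoercivity gives uniform control of $Q(\dot z)$, rather than along the moving family $\gamma(t_n)$, where Remark~\ref{cbounded} would furnish only a $t_n$-dependent bound and no contradiction.
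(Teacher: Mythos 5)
Your proof is correct, and while it uses the same two ingredients as the paper's proof --- the expression \eqref{tzm} for $t^{\kappa}_+$ and the uniform bound on $Q(\dot z)$ over sublevels of $\mathcal J_{p,q}$ from Remark~\ref{cbounded} --- it arranges them in a genuinely different order, with a different middle step. The paper, after observing (as you do) that $Q(\dot z_n)\to-\infty$, invokes Remark~\ref{cbounded} \emph{first} to conclude that $\mathcal L(z_n)\to+\infty$ (otherwise a subsequence would lie in a fixed sublevel, where $Q(\dot z)$ is bounded), and then reads off from \eqref{tzm} that $t^{\kappa}_+(z_n)\ge 0$ eventually, since $\sqrt{Q^2(\dot z_n)+2(\mathcal L(z_n)-\kappa)}\ge -Q(\dot z_n)$ once $\mathcal L(z_n)\ge\kappa$. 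You instead extract from the root identity $\mathcal L(z_n)=\kappa+\tfrac12 t_n^2-t_nq_n$ and the elementary inequality $t_nq_n\ge t_n^2$ (valid since $q_n\le t_n<0$) the purely algebraic estimate $\mathcal L(z_n)\le\kappa-\tfrac12 t_n^2\to-\infty$, and only then apply Remark~\ref{cbounded} at the fixed endpoint $q$ to contradict $q_n\to-\infty$. Amusingly, the two arguments derive opposite asymptotics for $\mathcal L(z_n)$ from the same absurd hypothesis, which is of course consistent. The paper's route is marginally shorter, as it never needs the quadratic identity; yours defers the compactness input to the very last line, which makes the role of pseudocoercivity (and the fact that it is used at the fixed point $q$ rather than along $\gamma(t_n)$) particularly transparent. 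Your sign bookkeeping in the middle paragraph is right.
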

\begin{proof}
	By contradiction,  let us assume the existence of a 
	sequence $(z_n)_n \subset \mathcal{N}_{p,q}$ such that
	$\lim_{n \to \infty}t^{\kappa}_+(z_n)= -\infty$.
	From \eqref{tzm}, this implies  that
	\[
		\lim_{n \to \infty} Q(\dot{z}_n) = -\infty,
	\]
	hence from Remark~\ref{cbounded}, up to pass to a subsequence,  
	$\mathcal L(z_n)=\mathcal J_{p,q}(z_n) \to +\infty$.
	Therefore, from  \eqref{tzm}, $t^{\kappa}_+(z_n)\geq 0$, for $n$ big enough.
\end{proof}
We are now ready to present an existence and multiplicity results for solutions of the Euler-Lagrange equations  \eqref{eq:Euler-Lagrange}. Previous existence results, in the case $\kappa=0$, based on causality techniques
	were obtained in \cite[Proposition 6.2 and Proposition B.2]{caponio2018} for Finsler spacetimes that admit a global splitting $S\times \mathbb R$ endowed with a Lorentz-Finsler  metric of the type \eqref{eq:LF-local} and  in \cite[Theorem 2.49]{Minguz19} in  the more general setting of a manifold with  a proper cone structure.
\begin{theorem}
	\label{existmult}
	Let $M$ be a smooth, connected finite dimensional manifold,
	$L\colon TM \to \mathbb{R}$
	be a Lorentz-Finsler metric on $M$ satisfying Assumption~\ref{ass:L},
	$p\in M$ and $\gamma\colon\mathbb R\to M$ be a flow line of $K$
	such that $p\not\in \gamma(\mathbb R)$.
	Let us assume that $\mathcal J_{p,\gamma(t)}$
	is pseudocoercive for all $t\in\mathbb{R}$.	
	Let $\kappa \le 0$.  Then, 
	\begin{itemize}
		\item[(a)] There exists a curve $z\colon [0,1] \to M$
			that is a solution of Euler-Lagrange equations~\eqref{eq:Euler-Lagrange}
			with energy $\kappa$, 
			joining $p $ and $\gamma(\mathbb{R})$ and minimizes $t^\kappa_{+}$;
		\item[(b)] If $M$ is a non-contractible manifold, then there exists a sequence of  curves
			$z_n\colon [0,1] \to M$
			that are solutions of Euler-Lagrange equations~\eqref{eq:Euler-Lagrange}
			with energy $\kappa$ joining $p$ and $\gamma(\mathbb{R})$ and such that 
			$\lim_{n \to \infty}t^\kappa_{+}(w_n) = + \infty$.
	\end{itemize}
\end{theorem}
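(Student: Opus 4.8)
The plan is to derive both parts from the variational analysis of the single $C^1$ functional $t_+^\kappa\colon\mathcal N_{p,q}\to\mathbb R$ (Remark~\ref{C1}). Since $L$ is $2$-homogeneous, Theorem~\ref{theorem:almostFermat} together with Remark~\ref{2hom} shows that the critical points $w$ of $t_+^\kappa$ yield, through $w\mapsto F^{t_+^\kappa(w)}(w)$, solutions of~\eqref{eq:Euler-Lagrange} of energy $\kappa$ joining $p$ to $\gamma$. Hence the entire problem reduces to producing critical points of $t_+^\kappa$, and the two analytic facts I shall use are that $t_+^\kappa$ is bounded from below (Lemma~\ref{lem:t_+boundedfrombelow}) and satisfies the Palais--Smale condition (the Proposition preceding Lemma~\ref{lem:t_+boundedfrombelow}).

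For part (a) I would use the direct method. As $\inf_{\mathcal N_{p,q}}t_+^\kappa$ is finite, I take a minimizing sequence and, via Ekeland's variational principle, may assume it is a Palais--Smale sequence; the Palais--Smale condition then provides a subsequence converging to a point $w\in\mathcal N_{p,q}$ realizing the infimum, which is therefore a critical point of $t_+^\kappa$. The curve $z=F^{t_+^\kappa(w)}(w)$ is the sought solution, with underlying $w$ minimizing $t_+^\kappa$ as in the statement.

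For part (b) I would run a Lusternik--Schnirelmann minimax scheme. The essential topological input is that $\mathcal N_{p,q}$ is homotopy equivalent to the path space $\Omega_{p,q}(M)$ --- the complementary directions $\mathcal W_z$ in the splitting~\eqref{eq:TzOmega-splitting} being modelled on the contractible space $H^1_0([0,1],\mathbb R)$ --- and that $\Omega_{p,q}(M)$ is homotopy equivalent to the based loop space $\Omega M$. Since $M$ is not contractible, a classical theorem (Fadell--Husseini; Serre) gives that the Lusternik--Schnirelmann category of $\mathcal N_{p,q}$ is infinite (equivalently, the Betti numbers of $\Omega M$ are unbounded). I then set
\[
	c_n=\inf_{\operatorname{cat}(A)\ge n}\ \sup_{w\in A}t_+^\kappa(w),
\]
a non-decreasing sequence of finite numbers, finite because $t_+^\kappa$ is bounded below and all the classes are non-empty. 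The Palais--Smale condition makes each $c_n$ a critical value, and a standard deformation argument forces $c_n\to+\infty$: if the $c_n$ stayed below some level, the critical set there would be compact by Palais--Smale, hence of finite category, contradicting $\operatorname{cat}(\mathcal N_{p,q})=\infty$. Picking critical points $w_n$ with $t_+^\kappa(w_n)=c_n$ and setting $z_n=F^{t_+^\kappa(w_n)}(w_n)$ yields the desired sequence of solutions with $t_+^\kappa(w_n)\to+\infty$.

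The step I expect to be delicate is the topological one in (b): rigorously establishing $\mathcal N_{p,q}\simeq\Omega_{p,q}(M)$ (which I would either deduce from the fibered structure underlying Proposition~\ref{prop:N_pq-characterization} or import from~\cite{caponio2023-calcvar}) and correctly invoking the infinitude of the category of the loop space of a non-contractible manifold, together with the verification that the minimax classes defining the $c_n$ are non-empty and realized on sets of bounded functional. By contrast, the analytic backbone --- boundedness from below and the Palais--Smale condition --- is already supplied by the preceding results, so parts (a) and (b) then follow by routine critical point theory.
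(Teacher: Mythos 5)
Your proposal is correct and follows essentially the same route as the paper: reduce to critical points of the bounded-below, Palais--Smale functional $t_+^\kappa$ via Theorem~\ref{theorem:almostFermat} and Remark~\ref{2hom}, and for (b) use that $\mathcal N_{p,q}$ has infinite Lusternik--Schnirelmann category because it is homotopy equivalent to the based loop space of the non-contractible $M$ (the paper cites \cite[Proposition 6.4]{caponio2023-calcvar} and \cite[Proposition 3.2]{fadell1991} for exactly this). The only difference is presentational: the paper invokes the packaged critical-point theorem \cite[Theorem (3.6)]{corvellec1993} where you unpack the standard Ekeland and minimax arguments by hand.
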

\begin{proof}
	Since $t^\kappa_{+}$ is a bounded from below, $C^1$ functional defined on a
	$C^1$ manifold and it satisfies the Palais-Smale condition,
	both part (a) and (b) follows from \cite[Theorem (3.6)]{corvellec1993},
	Theorem~\ref{theorem:almostFermat} and Remark~\ref{2hom},
	taking into account, for part (b),
	that if $M$ is non-contractible then the
	Lusternik-Schnirelmann category of $\mathcal{N}_{p,q}$ is $+\infty$
	as follows from~\cite[Proposition 6.4]{caponio2023-calcvar} and
	\cite[Proposition 3.2]{fadell1991}.
\end{proof}
\begin{remark}
	Assumption \eqref{QKmeno1} could  be considered quite restrictive;
	however,  for solutions with energy $\kappa =0$,
	that is not the case for the following reasons:
	\begin{itemize}
		\item[(1)] Since $L$ is a Lorentz-Finsler metric,
			solutions $z\colon[0,1]\to M$ of the Euler-Lagrange
			equations~\eqref{eq:Euler-Lagrange} with $\kappa=0$
			satisfy $L\big(z(s),\dot z(s)\big)=0$ for all $s\in [0,1]$
			and therefore they are  lightlike geodesics
			(see, e.g., \cite{Perlick2006365, Javaloyes2021, Minguzzi2017}).
		\item[(2)] 
				According to \cite[Proposition 4.4]{Javaloyes2014}
				(see also \cite[Proposition 3.4]{Javaloyes2021} and
				\cite[Proposition 12]{Minguzzi2017})
				for any smooth function $\varphi\colon M\to (0,+\infty)$
				and for any lightlike geodesic $z\colon[0,1]\to M$ of $L$,
				there exists a reparametrization of $z$
				(on some interval $[0,a_z]$)
				which is a lightlike geodesic of the Lorentz-Finsler
				metric $\varphi L$.
		\item[(3)] Let $\tilde L$ be a Lorentz-Finsler metric on $M$ which satisfies Assumption~\ref{ass:L} with \eqref{QKmeno1} replaced by $\tilde Q(K)<0$ (where $\tilde Q$ is the Noether charge of $\tilde L$).
			Hence, $L\coloneqq -\tilde L/\tilde Q(K)$ satisfies \eqref{QKmeno1}. 
		\item[(4)] The infinitesimal symmetry $K$ of $\tilde L$ remains an infinitesimal symmetry for $L$. This is a consequence of the fact that the flow $\psi$ of $K$  preserves $\tilde Q(K)$ (see the proof of \cite[Proposition 2.5-(iv)]{caponio2023-calcvar}), and then 
			\begin{multline*}
				\dfrac{\partial \big (L\circ \psi^c\big)}{\partial t}(t,x,v)=
				K^c(L)\big(\psi^c(t,x,v)\big)=K^c\big(-\tilde L/\tilde Q(K)\big)\big(\psi^c(t,x,v)\big)\\=
				-\dfrac{\partial \big (\tilde L\circ \psi^c/(\tilde Q(K)\circ \psi)\big)}{\partial t}(t,x,v)\\=\Big(\big(\tilde Q(K)\circ \psi\big)^{-2}\dfrac{\partial  \big(\tilde Q(K)\circ \psi\big)}{\partial t}\tilde L\circ \psi^c\\ -\big(\tilde Q(K)\circ \psi\big)^{-1}\dfrac{\partial \big (\tilde L\circ \psi^c\big)}{\partial t}\Big)(t,x,v)=0,
			\end{multline*}
	(recall the beginning of Section~\ref{sec:preliminary}).
	\end{itemize}
	Summing up,
	Theorem~\ref{existmult} also holds
	(replacing  $[0,1]$ with  unknown interval of parametrizations $[0, a_z]$)
	for a  Lorentz-Finsler metric $\tilde L$ on $M$ which satisfies
	Assumption~\ref{ass:L} with \eqref{QKmeno1} replaced by $\tilde Q(K)<0$.
\end{remark}

\begin{appendices}

\section{Affine Noether charge}
\label{sec:affineNoether}
In this section, we briefly show that Theorem~\ref{theorem:almostFermat}
holds even if the Noether charge is an affine function with respect to $v$.
Specifically,
there exists a $C^1$ one-form $Q$ on $M$
and a $C^1$ function $d\colon M \to \mathbb{R}$
such that~\eqref{eq:noether} is replaced by 
\begin{equation}
	\label{eq:def-N-affine}
	N(x,v) \coloneqq \partial_vL(x,v)[K] = Q(v) + d(x);
\end{equation}
and $d$ is invariant under the one-parameter group of
$C^3$ diffeomorphisms generated by $K$.
In such a case, the stationary type local structure is given by
\begin{equation}
	\label{eq:local-structure-affine}
	L(x,v) = L\circ \phi_* \big((y,t),(\nu,\tau)\big)
	= L_0(y,\nu) + \big(\omega_y(\nu) + d(y)\big)\tau - \frac{1}{2}\tau^2,
\end{equation}
so we have
\begin{equation}
	\label{eq:energy-local-structure-affine}
	E(x,v) = E\circ \phi_* \big((y,t),(\nu,\tau)\big)
	= E_0(y,\nu) + \omega_y(\nu) \tau - \frac{1}{2}\tau^2.
\end{equation}
Moreover, the set $\mathcal{N}_{p,r}$
is given by
\[
	\mathcal{N}_{p,r} \coloneqq
	\{z \in \Omega_{p,r}(M): \exists c \in \mathbb{R} \text{ such that }
	N(z,\dot{z}) = c, \text{ a.e. on } [0,1]\}\subset \Omega_{p,r}(M),
\]
and Proposition~\ref{prop:restriction-Npq} still holds.
Moreover, defining $F^t\colon \Omega_{p,q}(M) \to \Omega_{p,\gamma(t)}(M)$
as~\eqref{eq:def-Ft}
and $\mathcal{H}_{p,q}\colon \Omega_{p,q}(M) \times \mathbb{R}\to \mathbb{R}$
as in~\eqref{eq:def-hk},
it is possible to prove both 
Proposition~\ref{prop:restriction-Hpq}
and Corollary~\ref{cor:partial_zH}.
The main difference with the linear case is that 
Proposition~\ref{prop:LandEofzt} doesn't hold
and it is replaced by the following result,
whose proof is a based on a computation in local charts
which employs~\eqref{eq:local-structure-affine}.
\begin{proposition}
	\label{prop:LandEofzt-affine}
	For every $(x,v) \in TM$ and for every $t \in \mathbb{R}$,
	the following two equations holds:
	\[
		L\big(x,v + tK(x)\big) 
		= L\big(x,v\big) + t N(x,v) - \frac{1}{2}t^2,
	\]
	and
	\[
		E\big(x,v + tK(x)\big) 
		= E\big(x,v\big) + t Q(v) - \frac{1}{2}t^2.
	\]
	As a consequence, for every $(x,v)\in TM$ we have
	\begin{equation}
		\label{eq:diff-LandE-affine}
		L\big(x,v + tK(x)\big) - E\big(x,v + tK(x)\big) 
		=
		L\big(x,v\big) - E\big(x,v\big)
		+ t d(x).
	\end{equation}
\end{proposition}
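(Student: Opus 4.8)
The plan is to follow verbatim the argument used for Proposition~\ref{prop:LandEofzt}, exploiting that both identities are of local nature and may therefore be checked in a stationary product type chart $\phi$ as in~\eqref{eq:local-structure-affine}. In such a chart $K$ corresponds to $\partial_t$, so if $(x,v)=\phi_*\big((y,t),(\nu,\tau)\big)$ then $v+tK(x)$ has local coordinates $(\nu,\tau+t)$, the base point $y$ and the spatial velocity $\nu$ being left untouched. Recall also that, since $d$ is invariant under the flow of $K$, it descends to a function of $y$ alone, so that $d(x)=d(y)$, and that $Q(v)=\omega_y(\nu)-\tau$ by~\eqref{eq:Q-in-charts}.

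First I would establish the Lagrangian identity. Substituting $\tau\mapsto\tau+t$ in~\eqref{eq:local-structure-affine}, expanding the quadratic $-\frac{1}{2}(\tau+t)^2$ and regrouping the terms that reconstitute $L(x,v)$, the coefficient of the surviving term linear in $t$ is $\omega_y(\nu)+d(y)-\tau$. By~\eqref{eq:def-N-affine} together with $Q(v)=\omega_y(\nu)-\tau$ and $d(x)=d(y)$, this coefficient equals precisely $N(x,v)$, which yields the first displayed identity.

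Next I would treat the energy identity in exactly the same way, now starting from the given local expression~\eqref{eq:energy-local-structure-affine} for $E$. The only genuine difference with the Lagrangian computation---and the source of the affine correction---is that~\eqref{eq:energy-local-structure-affine} carries the coefficient $\omega_y(\nu)$ in front of $\tau$, \emph{without} the $d(y)$ contribution appearing in~\eqref{eq:local-structure-affine}; intrinsically this reflects that the affine-in-$\tau$ term $d(y)\tau$, with $d$ independent of $\nu$, is annihilated by the Legendre transform $E=\partial_vL[v]-L$. Consequently the linear-in-$t$ coefficient here is $\omega_y(\nu)-\tau=Q(v)$, giving the second identity.

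Finally, the difference formula~\eqref{eq:diff-LandE-affine} is obtained by subtracting the two identities: the $-\frac{1}{2}t^2$ terms cancel, and the linear terms combine into $t\big(N(x,v)-Q(v)\big)=t\,d(x)$ by~\eqref{eq:def-N-affine}. The computation is entirely routine; the only point requiring care is the asymmetry between the $L$- and $E$-coefficients noted above, which is exactly what breaks the $K$-flow invariance of $L-E$ enjoyed in the linear case (Proposition~\ref{prop:LandEofzt}) and produces the extra term $t\,d(x)$.
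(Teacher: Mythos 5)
Your proposal is correct and follows exactly the route the paper indicates: the paper gives no written proof for this proposition, stating only that it ``is based on a computation in local charts which employs~\eqref{eq:local-structure-affine}'', and your argument carries out precisely that computation by mimicking the proof of Proposition~\ref{prop:LandEofzt} with the substitution $\tau\mapsto\tau+t$ in~\eqref{eq:local-structure-affine} and~\eqref{eq:energy-local-structure-affine}. Your identification of the linear-in-$t$ coefficients with $N(x,v)=\omega_y(\nu)+d(y)-\tau$ and $Q(v)=\omega_y(\nu)-\tau$ (using the $K$-invariance of $d$) and the resulting extra term $t\,d(x)$ in the difference are all accurate.
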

Since on any curve $z \in \mathcal{N}_{p,q}$
the quantities $Q(\dot{z})$ and $d(z)$ are not necessarily constant,
let us introduce the functionals $\mathcal{Q}\colon \mathcal{N}_{p,q} \to \mathbb{R}$
and $\mathcal{D}\colon \mathcal{N}_{p,q} \to \mathbb{R}$
as follows:
\[
	\mathcal{Q}(z) \coloneqq \int_{0}^{1}Q(\dot{z})\d s,
	\quad\text{and}\quad
	\mathcal{D}(z) \coloneqq \int_0^1 d(z)\d s.
\]
Using this notation, for every $z\in\mathcal{N}_{p,q}$
the two quantities $t_{\pm}^{\kappa}(z)$ that satisfy~\eqref{eq:prop_t+-}
are given by
\[
	t_{\pm}^{\kappa}(z) = \mathcal{Q}(z) \pm
	\sqrt{
		\mathcal{Q}^2(z)
	+ 2\big( \mathcal{E}(z)- \kappa \big) },
\]
and they are still well defined if $\kappa$
satisfies~\eqref{eq:condition-on-kappa}.
Moreover, since the function $d$ doesn't appear in the 
expression of $E(x,v)$ in local coordinates 
(see~\eqref{eq:energy-local-structure-affine}),
Corollary~\ref{cor:partial_tH-notnull} still holds.
Because of the difference between~\eqref{eq:diff-LandE-affine}
and~\eqref{eq:diff-LandE},
we have that in the affine case
the equation analogous to~\eqref{eq:H-in-t+} is
\begin{multline*}
	\mathcal{H}_{p,q}(z,t_+^{\kappa}(z)) 
	= \mathcal{L}(z) -\mathcal{E}(z) + \kappa
	+ t_+^{\kappa}(z)\big(N(z,\dot{z}) - \mathcal{Q}(z)\big)\\
	= \mathcal{L}(z) -\mathcal{E}(z) + \kappa
	+ t_+^{\kappa}(z)\mathcal{D}(z).
\end{multline*}
As a consequence,
using a similar proof of the one of Theorem~\ref{theorem:almostFermat},
we obtain the following result.
\begin{theorem}
	\label{theorem:almostFermat-affine}
	Let $L\colon TM \to \mathbb R$ satisfy
	assumptions~\ref{ass:L}, \ref{ass:L1}, and \ref{ass:bounds},
	with~\eqref{eq:def-N-affine} instead of~\eqref{eq:noether},
	and let $\kappa \in \mathbb{R}$ satisfy~\eqref{eq:condition-on-kappa}.
	A curve $\ell$ is a solution of the Euler-Lagrange equations \eqref{eq:Euler-Lagrange}
	joining $p$ and $\gamma$ with energy $\kappa$
	if and only if
	there exists $z \in \mathcal{N}_{p,q}$ such that
	$\ell = F^{t_+^{\kappa}(z)}(z)$ or $\ell = F^{t_-^{\kappa}(z)}(z)$,
	and the following equality holds:
	\[
		\d t_+^{\kappa}(z)
		= \frac{\d \mathcal{E}(z) - \d \mathcal{L}(z) - t_+^{\kappa}(z)\d\mathcal{D}(z)}
		{\sqrt{\mathcal{Q}^2(\dot{z})+ 2\big( \mathcal{E}(z)- \kappa \big)}},
	\]
	or
	\[
		\d t_-^{\kappa}(z)
		= \frac{\d \mathcal{L}(z) - \d \mathcal{E}(z) + t_-^{\kappa}(z)\d\mathcal{D}(z)}
		{\sqrt{\mathcal{Q}^2(\dot{z})+ 2\big( \mathcal{E}(z)- \kappa \big)}}.
	\]
\end{theorem}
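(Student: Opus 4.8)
The plan is to replay the proof of Theorem~\ref{theorem:almostFermat} almost verbatim, carrying along the extra terms generated by the affine part $d$ of the Noether charge. First I would take a solution $\ell$ of \eqref{eq:Euler-Lagrange} joining $p$ to $\gamma$ with energy $\kappa$. By the affine version of Proposition~\ref{prop:restriction-Npq} together with Remark~\ref{rem:E-constant-on-critical-points}, $\ell$ is a critical point of $\mathcal{L}_{p,\gamma(t)}$ belonging to $\mathcal{N}_{p,\gamma(t)}$, where $t$ is the parameter of $\gamma$ at the endpoint $\ell(1)$. Since $F^t$ is a diffeomorphism (Proposition~\ref{prop:N_pq-Npgammat}), I set $z \coloneqq F^{-t}(\ell) \in \mathcal{N}_{p,q}$; the condition $\mathcal{E}(\ell) = \kappa$ forces $t$ to solve the defining quadratic, so that $t = t_+^\kappa(z)$ or $t = t_-^\kappa(z)$. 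I treat the $t_+^\kappa$ branch, the other being identical.

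The core computation is the affine replacement of \eqref{eq:H-in-t+}. Evaluating $\mathcal{H}_{p,q}$ along the section $z \mapsto \big(z, t_+^\kappa(z)\big)$ and using \eqref{eq:diff-LandE-affine} together with $\mathcal{E}\big(F^{t_+^\kappa(z)}(z)\big) = \kappa$, I obtain
\begin{equation*}
	\mathcal{H}_{p,q}\big(z, t_+^\kappa(z)\big)
	= \mathcal{L}(z) - \mathcal{E}(z) + \kappa + t_+^\kappa(z)\,\mathcal{D}(z),
	\qquad \forall z \in \mathcal{N}_{p,q}.
\end{equation*}
Differentiating both sides on $\mathcal{N}_{p,q}$, the left-hand side produces $\partial_z\mathcal{H}_{p,q} + \partial_t\mathcal{H}_{p,q}\,\d t_+^\kappa(z)$, while the right-hand side now carries, through the product rule, the additional term $\mathcal{D}(z)\,\d t_+^\kappa(z) + t_+^\kappa(z)\,\d\mathcal{D}(z)$ that is absent in the linear case.

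The decisive point is the cancellation of the two $\mathcal{D}(z)\,\d t_+^\kappa(z)$ contributions. From Proposition~\ref{prop:LandEofzt-affine} one computes, on $\mathcal{N}_{p,q}$, that $\partial_t\mathcal{H}_{p,q}(z,t) = \mathcal{Q}(z) + \mathcal{D}(z) - t$, whence
\begin{equation*}
	\partial_t\mathcal{H}_{p,q}\big(z, t_+^\kappa(z)\big) - \mathcal{D}(z)
	= \mathcal{Q}(z) - t_+^\kappa(z)
	= -\sqrt{\mathcal{Q}^2(z) + 2\big(\mathcal{E}(z) - \kappa\big)}.
\end{equation*}
Since $\ell$ is critical we have $\partial_z\mathcal{H}_{p,q}\big(z,t_+^\kappa(z)\big) = 0$ (affine form of Proposition~\ref{prop:restriction-Hpq}); moving $\mathcal{D}(z)\,\d t_+^\kappa(z)$ to the left makes the coefficient of $\d t_+^\kappa(z)$ exactly $\mathcal{Q}(z) - t_+^\kappa(z)$. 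Its nonvanishing is the content of the affine analogue of Corollary~\ref{cor:partial_tH-notnull}: the energy \eqref{eq:energy-local-structure-affine} does not involve $d$, so the radicand is strictly positive by the same argument as in Lemma~\ref{neqkappa}. Dividing through yields precisely the stated identity for $\d t_+^\kappa(z)$.

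For the converse I would read these equalities backwards: the displayed formula for $\d t_+^\kappa(z)$ reconstructs $\partial_z\mathcal{H}_{p,q}\big(z,t_+^\kappa(z)\big) = 0$, and then the affine forms of Proposition~\ref{prop:restriction-Hpq} and Corollary~\ref{cor:partial_zH} show that $\ell = F^{t_+^\kappa(z)}(z)$ is critical for $\mathcal{L}_{p,\gamma(t)}$, with Proposition~\ref{prop:restriction-Npq} and Remark~\ref{rem:E-constant-on-critical-points} closing the argument. The main obstacle I anticipate is not analytic but structural bookkeeping: one must verify that every ingredient imported from the linear theory genuinely survives when $N$ is merely affine, and, most delicately, confirm the cancellation above so that the denominator remains $\sqrt{\mathcal{Q}^2(z) + 2(\mathcal{E}(z) - \kappa)}$ rather than acquiring a spurious $\mathcal{D}$-dependence.
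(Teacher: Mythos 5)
Your proposal is correct and follows essentially the same route as the paper, which simply records the modified identity $\mathcal{H}_{p,q}(z,t_+^{\kappa}(z))=\mathcal{L}(z)-\mathcal{E}(z)+\kappa+t_+^{\kappa}(z)\mathcal{D}(z)$ and declares the rest "similar to Theorem~\ref{theorem:almostFermat}". Your explicit verification that $\partial_t\mathcal{H}_{p,q}(z,t)=\mathcal{Q}(z)+\mathcal{D}(z)-t$, so that the extra $\mathcal{D}(z)\,\d t_+^{\kappa}(z)$ term cancels and the coefficient reduces to $\mathcal{Q}(z)-t_+^{\kappa}(z)=-\sqrt{\mathcal{Q}^2(z)+2(\mathcal{E}(z)-\kappa)}$, is exactly the computation the paper leaves implicit.
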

\begin{corollary}
	\label{cor:fermat-affine-d-const}
	Let $L\colon TM \to \mathbb R$ satisfy
	assumptions~\ref{ass:L}, \ref{ass:L1}, and \ref{ass:bounds},
	with~\eqref{eq:def-N-affine} instead of~\eqref{eq:noether},
	and let $\kappa \in \mathbb{R}$ satisfy~\eqref{eq:condition-on-kappa}.
	Moreover, assume that $d\colon M \to \mathbb{R}$ is a constant function.
	Then, a curve $\ell$ is a solution of the
	Euler-Lagrange equations~\eqref{eq:Euler-Lagrange}
	joining $p$ and $\gamma$ with energy $\kappa$
	if and only if
	there exists $z \in \mathcal{N}_{p,q}$ such that
	$\ell = F^{t_+^{\kappa}(z)}(z)$ or $\ell = F^{t_-^{\kappa}(z)}(z)$,
	and~\eqref{eq:t_+-critical?} or~\eqref{eq:t_--critical?} holds.

\end{corollary}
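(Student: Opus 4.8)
The plan is to obtain the corollary as the specialization of Theorem~\ref{theorem:almostFermat-affine} to a constant $d$, observing that the two formulas in that theorem collapse precisely onto \eqref{eq:t_+-critical?} and \eqref{eq:t_--critical?} in this case. Comparing the statements, the only discrepancies are the extra summand $t_\pm^\kappa(z)\,\d\mathcal{D}(z)$ appearing in the numerators and the radicand $\mathcal{Q}^2(z)$ in place of $Q^2(\dot z)$; I would show that both disappear once $d$ is constant.

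First, I would note that if $d\equiv d_0$, then $\mathcal{D}(z)=\int_0^1 d(z)\,\d s = d_0$ for every $z\in\mathcal{N}_{p,q}$, so $\mathcal{D}$ is a constant functional and hence $\d\mathcal{D}(z)=0$ for all $z$. This annihilates the terms $t_+^\kappa(z)\,\d\mathcal{D}(z)$ and $t_-^\kappa(z)\,\d\mathcal{D}(z)$ in the numerators of Theorem~\ref{theorem:almostFermat-affine}, leaving exactly $\d\mathcal{E}(z)-\d\mathcal{L}(z)$ and $\d\mathcal{L}(z)-\d\mathcal{E}(z)$.

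Second, I would check that the two radicands coincide. On the constraint manifold the affine Noether charge from \eqref{eq:def-N-affine}, $N(z,\dot z)=Q(\dot z)+d(z)$, is constant a.e.\ along each $z\in\mathcal{N}_{p,q}$; with $d\equiv d_0$ this forces $Q(\dot z)$ itself to be constant a.e. Consequently $\mathcal{Q}(z)=\int_0^1 Q(\dot z)\,\d s$ equals that constant value of $Q(\dot z)$, so $\mathcal{Q}^2(z)=Q^2(\dot z)$ and the denominators in Theorem~\ref{theorem:almostFermat-affine} agree with those in \eqref{eq:t_+-critical?} and \eqref{eq:t_--critical?}.

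Combining the two reductions, the formulas of Theorem~\ref{theorem:almostFermat-affine} become verbatim \eqref{eq:t_+-critical?} and \eqref{eq:t_--critical?}, while the characterization of $\ell$ as a fixed-energy Euler--Lagrange solution joining $p$ to $\gamma$ through $\ell=F^{t_\pm^\kappa(z)}(z)$ is inherited unchanged from that theorem. There is no genuine obstacle here: the whole argument rests on the triviality $\d\mathcal{D}=0$, and the only point demanding a line of justification is the constancy of $Q(\dot z)$ on $\mathcal{N}_{p,q}$, needed so that the symbols $\mathcal{Q}^2(z)$ and $Q^2(\dot z)$ denote the same number.
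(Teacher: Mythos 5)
Your proposal is correct and matches the paper's (implicit) argument: the corollary is stated as an immediate specialization of Theorem~\ref{theorem:almostFermat-affine}, and the two reductions you identify --- $\d\mathcal{D}(z)=0$ because $\mathcal{D}$ is the constant functional, and $\mathcal{Q}(z)=Q(\dot z)$ because constancy of $N(z,\dot z)=Q(\dot z)+d_0$ on $\mathcal{N}_{p,q}$ forces $Q(\dot z)$ to be constant a.e. --- are exactly what makes the formulas collapse onto \eqref{eq:t_+-critical?} and \eqref{eq:t_--critical?}.
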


\section{Pseudocoercivity and global hyperbolicity}
\label{equivalence}
In this appendix we show that pseudocoercivity and global hyperbolicity of a Finsler spacetime $(M,L)$
as defined in Subsection~\ref{sec:LF},
are connected notions.
We refer to \cite{Minguz15, Minguz19} for the needed notions of  causality,
and in particular of global hyperbolicity and of a Cauchy hypersurface,
in Finsler spacetimes and in the more general framework
of proper cone structures
(see \cite[Definition~2.4]{Minguz19}).
We notice indeed that $M$ is endowed with a continuous  cone structure 
$\mathcal C\coloneqq \{(x,v)\in TM: L(x,v)\leq 0, Q(v)<0\}$. 
In fact, from the local expression of $L$  \eqref{eq:LF-local},
we deduce that $(\nu,\tau)\in \mathcal C_{(y,t)}\coloneqq\mathcal C\cap T_{(y,t)}M$
if and only if  
\begin{equation}
	\label{causal}
	\tau\geq \omega(\nu)+\sqrt{\omega^2(\nu) +2F^2(y,\nu)},\
\end{equation}
and since  $F^2(y,\cdot)$ is strongly convex,  we deduce that 
$\mathcal{C}_{(y,t)}\cup\{0\}$ is a  closed, convex, sharp cone with non-empty interior.

Our first aim  would be to extend \cite[Theorem 5.1]{CaFlSa08},
which states that if a stationary Lorentzian manifold is globally hyperbolic
with a complete Cauchy hypersurface then it is pseudocoercive.
We  obtain a  result in that direction,
namely Proposition~\ref{prop:fromGH-toPseudoCoerciviness},
that ensures pseudocoerciveness from the global hyperbolicity
in our setting requiring some other technical assumptions
that are trivially satisfied in the Lorentzian setting.

\begin{lemma}
	\label{ghtosplitting}
	Let $(M,L)$ be a Finsler spacetime
	$($i.e. $L_F \colon TM \to \mathbb{R}$ satisfies $(a')$,
	$(b)$ and $(c')$ in Definition~\ref{def:Lorentz-Finsler-metric}$)$
	such that Assumption~\ref{ass:L} holds.
	If $(M,L)$ is globally hyperbolic
	$($i.e. the cone structure $\mathcal C$ associated to $L$ is globally hyperbolic$)$
	then $M$ globally splits as $S\times \mathbb R$ and $L$ is given on $S\times \mathbb R$
	by an expression of the type \eqref{eq:L-localchart},
	with $L_0\coloneqq L|_{TS}$ and $\omega$ the one-form induced by $Q$ on $S$.
\end{lemma}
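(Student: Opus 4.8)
The plan is to obtain the splitting by using the complete flow of $K$ to sweep out $M$ starting from a Cauchy hypersurface, exactly as in the Lorentzian stationary case (the model being \cite[Theorem 5.1]{CaFlSa08} and, behind it, Geroch's argument), but phrased for the cone structure $\mathcal C$. First I would record that $K$ is a \emph{timelike} field for $\mathcal C$: writing $K=\partial_t$ in the local structure \eqref{eq:LF-local}, i.e.\ $(\nu,\tau)=(0,1)$, and using $F(y,0)=0$, one gets $L(x,K)=-\tfrac12<0$ together with $Q(K)=-1<0$, so $K_x\in\operatorname{int}\mathcal C_x$ for every $x\in M$. Since $K$ is complete and nowhere vanishing, each integral curve $s\mapsto\psi(s,x)$ is an inextendible timelike (hence causal) curve in $M$.

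Next I would invoke the causal theory of globally hyperbolic proper cone structures collected in \cite{Minguz19} to fix a \emph{smooth spacelike} Cauchy hypersurface $\mathcal S\subset M$ (this is where the smoothing results for Cauchy hypersurfaces are used). By the defining property of a Cauchy hypersurface, every inextendible timelike curve meets $\mathcal S$ exactly once: existence comes from the Cauchy property applied to the integral curves of $K$, and uniqueness from the achronality of $\mathcal S$ together with the timelike character of $K$, which forbids a second intersection. Thus each flow line of $K$ crosses $\mathcal S$ in a single point.

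With this in hand, define $\Psi\colon\mathcal S\times\mathbb R\to M$ by $\Psi(x,t)=\psi(t,x)$. The exactly-once property makes $\Psi$ a bijection, and $\Psi$ is smooth because the flow of $K$ is $C^3$ (Assumption~\ref{ass:L}). Its differential is everywhere an isomorphism: since $\psi(t,\cdot)$ is a diffeomorphism and $\psi(t,\cdot)_*K=K$, it carries the transversal pair $\big(T_x\mathcal S,\,K(x)\big)$ to $\big(\partial_x\psi(t,x)[T_x\mathcal S],\,K(\psi(t,x))\big)$, and transversality of $K$ to the spacelike $\mathcal S$ guarantees that these still span $T_{\psi(t,x)}M$. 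Hence $\Psi$ is a local diffeomorphism and, being bijective, a global one; setting $S=\mathcal S$ yields $M\cong S\times\mathbb R$ with $\Psi_*\partial_t=K$.

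Finally, the asserted form of $L$ is immediate from what is already proven. Under the splitting, decompose any $v\in T_xM$ as $v=\nu+\tau K$ with $\nu\in TS$; applying equation \eqref{eq:L-zt} of Proposition~\ref{prop:LandEofzt} with $t=\tau$ gives $L(x,\nu+\tau K)=L(x,\nu)+\tau\,Q(\nu)-\tfrac12\tau^2$, which is exactly the expression \eqref{eq:L-localchart} with $L_0=L|_{TS}$ and $\omega$ the one-form induced by $Q$ on $S$. I expect the \textbf{main obstacle} to be the middle step: securing a smooth spacelike Cauchy hypersurface and the exactly-once intersection property in the cone-structure setting rather than the Lorentzian one. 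This is precisely where the results of \cite{Minguz19} (characterization of global hyperbolicity for proper cone structures and smoothing of Cauchy hypersurfaces) carry the weight, the remaining steps being transversality of $K$ to $\mathcal S$ and the already-established identity \eqref{eq:L-zt}.
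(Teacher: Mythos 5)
Your argument is correct and follows essentially the same route as the paper: a Cauchy datum transversal to $K$, the complete flow of $K$ to build the global diffeomorphism $S\times\mathbb R\to M$, and the Noether-charge identity \eqref{eq:L-zt} to read off the form of $L$. The only differences are cosmetic: the paper takes $S=T^{-1}(0)$ for a smooth Cauchy time function $T$ from \cite{FatSic12}, so that transversality is $\d T(K)>0$ and the exactly-once intersection of flow lines with $S$ follows from strict monotonicity of $T\circ\gamma$ together with $T(\gamma(s))\to\pm\infty$ (rather than from achronality of a smooth spacelike Cauchy hypersurface), and it re-derives the identity on $T_xM$, $x\in S$, by integrating $\tfrac{\d}{\d s}L(x,w_S+s\tau_w K)=\tau_w Q(w_S)-s\tau_w^2$ instead of citing Proposition~\ref{prop:LandEofzt}.
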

\begin{proof}
	From \cite[Theorem 1.3]{FatSic12},
	we have that there exists a smooth Cauchy time function
	$T\colon M\to \mathbb R$.
	Let then $S\coloneqq T^{-1}(0)$.
	Being $K_x\in \mathcal C_x$, for all $x\in M$, we have that $\d T(K)>0$
	by definition of a smooth time function,
	and then  $K$ is transversal to $S$.
	Thus, for any vector $(x,w)\in TM$ with $x\in S$,
	we can write $w=w_S+\tau_w K_x$ where $w_S\in T_x S$.
	Since
	\[
		\frac{\d}{\d s} L(x,w_S + s\tau_w K)=\tau_w\partial_vL(x, w_S+s\tau_w K)[K]=\tau_w Q(w_S)-s\tau_w^2,
	\]
	by integrating w.r.t. $s$ between $0$ and $1$, we get
	\[
		L(x, w)=L(x, w_S+\tau_w K)=L(x,w_S)+\tau_w Q(w_S)-\frac 12 \tau_w^2,
	\]
	which gives the required expression for $L$ restricted to vectors $(x,w)\in TM$ with $x\in S$.
	Let $\phi$ be  the restriction to $S\times \mathbb R$ of the flow of $K$.
	Since $T$ is a Cauchy time function,
	it is strictly increasing on the flow lines $\gamma$ of $K$
	and it satisfies $\lim_{s\to \pm \infty}T(\gamma(s))=\pm \infty$.
	Therefore, $\phi\colon S\times \mathbb R\to M$ is a diffeomorphism.
	Using that $L$ is invariant by the flow of $K^c$
	we obtain 
	\begin{equation}
		\label{global}
		L \circ \phi_*\big((x,t),(\nu,\tau)\big)
		= L_0(x,\nu) + \omega(\nu)\tau - \frac 1 2 \tau^2,
	\end{equation}
	where  $L_0=L|_{TS}$ and 
	$\omega$ is the one-form induced by $Q$ on $S$.
\end{proof}

Let us denote by $g_S$ the $C^1$  Riemannian metric
on $S$ induced by $g$.
We assume that the one-form $\omega$ has sublinear growth w.r.t. the distance $d_{S}$ induced by
$g_S$, i.e. there exist  $\alpha\in [0,1)$ and two non-negative constants
$k_0$ and $k_1$ such that
\begin{equation}
	\label{sublinear}
	\|\omega\|\leq k_0+k_1\big(d_S(x,x_0)\big)^\alpha,
\end{equation}
for some $x_0\in S$ and all $x\in S$. 
From \cite[Proposition 8.1]{caponio2023-calcvar},
we immediately  obtain the following result.
\begin{proposition}
	\label{prop:fromGH-toPseudoCoerciviness}
	Under the assumptions of Lemma~\ref{ghtosplitting},
	assume also that  $g$ is complete, \eqref{sublinear} holds, $L_0$   is non-negative and  satisfies 
	\begin{equation}\label{L0convex}	\big(\partial_v L_0(x,v_2)-\partial_0 L_0(x,v_1)\big)[v_2-v_1]
		\geq \lambda_0(x)\norm{v_2 - v_1}^2,
	\end{equation}
	for each  $x\in S$, and all $v_1, v_2\in T_xS$. If 
	$\displaystyle \inf_{x\in S}\lambda_0(x)>0$,  then 
	$\mathcal J_{p,r}$ is pseudocoercive for all $p, r\in M$.
\end{proposition}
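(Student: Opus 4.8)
The plan is to reduce the statement to a direct application of \cite[Proposition 8.1]{caponio2023-calcvar}, whose sufficient condition for pseudocoercivity is the existence of a $C^1$ function $\varphi\colon M\to\mathbb{R}$ with $\d\varphi(K)>0$, supplemented by completeness of the auxiliary metric, sublinear growth of the one-form, and uniform fiberwise convexity of the space-part Lagrangian. The role of global hyperbolicity is precisely to produce such a $\varphi$; everything else is supplied verbatim by the added hypotheses.

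First I would invoke Lemma~\ref{ghtosplitting}: since $(M,L)$ is globally hyperbolic, $M$ splits globally as $S\times\mathbb{R}$ and $L$ takes the form \eqref{global}, with $L_0=L|_{TS}$ and $\omega$ the one-form induced by $Q$ on $S$. Under this splitting the flow parameter of $K$ is the natural coordinate of the $\mathbb{R}$-factor, so $K=\partial_t$. I would then take $\varphi\colon S\times\mathbb{R}\to\mathbb{R}$ to be the projection $\varphi(x,t)=t$; it is smooth and satisfies $\d\varphi(K)=\d t(\partial_t)=1>0$ everywhere. Equivalently, the smooth Cauchy time function $T$ furnished by \cite{FatSic12} in the proof of Lemma~\ref{ghtosplitting} already satisfies $\d T(K)>0$, and either choice works.

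With $\varphi$ in hand, the remaining hypotheses of \cite[Proposition 8.1]{caponio2023-calcvar} are exactly the standing assumptions of the present proposition: the auxiliary metric $g$ is complete, the one-form $\omega$ has the sublinear growth \eqref{sublinear} with respect to $d_S$, the space-part Lagrangian $L_0$ is non-negative, and it is uniformly strongly convex in the sense of \eqref{L0convex} with $\inf_{x\in S}\lambda_0(x)>0$. Feeding these into \cite[Proposition 8.1]{caponio2023-calcvar} yields that $\mathcal{J}_{p,r}$ is $c$-precompact for every $c\in\mathbb{R}$ and every $p,r\in M$, that is, $\mathcal{J}_{p,r}$ is pseudocoercive, which is the assertion.

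The main obstacle is not in this final assembly but has already been discharged by Lemma~\ref{ghtosplitting}, where the nontrivial input is the existence of a smooth Cauchy time function for the cone structure $\mathcal{C}$ \cite{FatSic12}; this is what upgrades mere global hyperbolicity to a genuine product splitting carrying the function $\varphi$. The only point requiring care at the level of the present proof is bookkeeping: checking that completeness of $g$ on $M$ restricts to completeness of $g_S$ on the slice $S$, and that the normalization of the growth bound \eqref{sublinear} matches the form assumed in \cite[Proposition 8.1]{caponio2023-calcvar}, so that the cited result applies without modification.
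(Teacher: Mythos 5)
Your proposal is correct and follows exactly the route the paper takes: the paper states that Proposition~\ref{prop:fromGH-toPseudoCoerciviness} follows ``immediately'' from \cite[Proposition 8.1]{caponio2023-calcvar} once Lemma~\ref{ghtosplitting} supplies the global splitting $S\times\mathbb{R}$, with the time coordinate (equivalently, the Cauchy time function) playing the role of the function $\varphi$ with $\d\varphi(K)>0$. Your write-up simply makes explicit the bookkeeping that the paper leaves implicit, so there is nothing to add.
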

\begin{remark}
	\label{noB3}
	We notice that the condition~\eqref{L0convex} is always  satisfied in the Lorentzian setting, 
	since $S$ can be taken to be a smooth spacelike Cauchy hypersurface;
	moreover if $S$ is complete then a possible auxiliary Riemannian metric
	$g$ on $S\times\mathbb{R}$ is  the natural product metric
	which is then also complete.
	Therefore,  $\lambda_0(x)=1$, for each $x\in S$, in the Lorentzian setting.
	We point out that in~\cite[Theorem 5.1]{CaFlSa08}
	the completeness of $S$ is a required assumption.
	The more technical assumption in
	Proposition~\ref{prop:fromGH-toPseudoCoerciviness} is \eqref{sublinear}.
	It is needed to get the boundedness of the constants $Q(\dot z)$,
	for all $z$ in a fixed sublevel $\mathcal J_{p,r}^c$,
	a property called {\em c-boundedness} in \cite{caponio2023-calcvar},
	that implies pseudocoerciveness if satisfied for each $c\in\mathbb R$
	(see~\cite[Proposition 7.2]{caponio2023-calcvar}).
	Actually, when $L$ is a $2$-positive homogeneous Lagrangian
	and $L_0\in C^1(TS)$ is the square of a Finsler metric on $S$,
	a close inspection of the proof of~\cite[Theorem 5.1]{CaFlSa08}
	makes clear that~\eqref{sublinear} can be removed,
	and an analogous proof can be repeated
	by using the action functional of $L_0$
	instead of the energy functional of the Riemannian metric on $S$.
	In fact, using the global splitting $S\times \mathbb R$ and \eqref{global},
	the arrival time functional of a lightlike curve
	$z(s)=\big (x(s), t(s)\big)$,
	(i.e., a causal curve $z\colon [0,1]\to M$ such that
	$L\big (z(s), \dot z(s)\big )=0$, a.e. on $[0,1]$)
	between $p=(x_0, 0)\in S\times\{0\}$ and a flow line of $K$,
	$\gamma(t)=(x_1, t)$, is given by
	\[
		J\colon \Omega_{x_0,x_1}(S)\to \mathbb{R},
		\qquad
		J(x)=\int_0^1
	\left(\omega(\dot x)+\sqrt{\omega^2(\dot x) +2L_0(x, \dot x\big)}\right)\d s,
	\]
	and this is a key point in the proof of~\cite[Theorem 5.1]{CaFlSa08}
	(refer to~\cite[Lemma 5.4]{CaFlSa08}).
	Moreover, the completeness of the Riemannian metric on $S$
	can be replaced by the forward or backward completeness of $\sqrt{L_0}$.
	Another fundamental point is the compactness of $S\cap J^-(q)$, for any $q\in M$,
	(see \eqref{Jp} for the definition of $J^-(q)$),
	used in the proof of \cite[Lemma 5.5]{CaFlSa08}.
	In our setting, this is an immediate consequence
	of~\cite[Theorem 2.44]{Minguz19}.
	Summing up, the following result extending~\cite[Theorem 5.1]{CaFlSa08} holds:
\end{remark}
\begin{theorem}
	\label{th:fromGH-toPseudoCoerciviness}
	Under the assumptions of Lemma~\ref{ghtosplitting},
	assume also that $L_0\in C^1(TS)$ is the square of a forward or backward complete Finsler metric  on $S$.
	Then $\mathcal J_{p,r}$ is pseudocoercive for all $p, r\in M$.
\end{theorem}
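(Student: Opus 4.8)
The plan is to pass to the global splitting furnished by Lemma~\ref{ghtosplitting} and then transplant the proof of \cite[Theorem~5.1]{CaFlSa08} to the Finsler setting, replacing its Riemannian ingredients by Finslerian ones. Since $(M,L)$ is globally hyperbolic, Lemma~\ref{ghtosplitting} gives $M=S\times\mathbb R$ and the normal form \eqref{global}, with $L_0=L|_{TS}=F^2$ the square of the (forward or backward complete) Finsler metric $F$ on $S$ and $\omega$ the one-form induced by $Q$. As $L$ is $2$-homogeneous, $\mathcal E=\mathcal L$ and $\mathcal J_{p,r}=\mathcal L|_{\mathcal N_{p,r}}$, so pseudocoercivity amounts to: for every $c\in\mathbb R$, each sequence $(z_n)\subset\mathcal J_{p,r}^c$ has a uniformly convergent subsequence. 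By \cite[Proposition~7.2]{caponio2023-calcvar} this follows once we prove the $c$-boundedness $\sup_n|Q(\dot z_n)|<+\infty$.

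Writing $z_n=(x_n,t_n)$ and using that $Q(\dot z_n)=\omega(\dot x_n)-\dot t_n$ is a.e.\ constant on $\mathcal N_{p,r}$, a short computation from \eqref{global} gives
\[
	\mathcal L(z_n)=\int_0^1 L_0(x_n,\dot x_n)\,\d s+\frac12\int_0^1\omega^2(\dot x_n)\,\d s-\frac12 Q^2(\dot z_n).
\]
Since $L_0=F^2\ge 0$, the bound $\mathcal L(z_n)\le c$ yields $\int_0^1 L_0(x_n,\dot x_n)\,\d s\le c+\tfrac12 Q^2(\dot z_n)$, so once the $c$-boundedness holds the $L_0$-action is bounded and hence, by Cauchy--Schwarz, so is the $F$-length of $x_n$. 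The forward or backward completeness of $F$ (Finslerian Hopf--Rinow) then makes the relevant metric balls precompact, giving an $H^1$-bound on $(x_n)$ and a weakly and uniformly convergent subsequence $x_n\to x$, while $\dot t_n=\omega(\dot x_n)-Q(\dot z_n)$ transfers the convergence to $t_n$; thus $(z_n)$ converges uniformly and everything is reduced to the $c$-boundedness.

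To obtain it I would replace the growth hypothesis \eqref{sublinear}, used for this purpose in Proposition~\ref{prop:fromGH-toPseudoCoerciviness}, by the causal structure, in the spirit of \cite[Lemmas~5.4 and~5.5]{CaFlSa08}. To the spatial path $x_n$ one associates its future-pointing lightlike lift, the causal curve $s\mapsto(x_n(s),\hat t_n(s))$ with $\dot{\hat t}_n=\omega(\dot x_n)+\sqrt{\omega^2(\dot x_n)+2L_0(x_n,\dot x_n)}$ from $p$ to the flow line $\gamma$, whose arrival time is the value $J(x_n)$ of the functional
\[
	J(x)=\int_0^1\Big(\omega(\dot x)+\sqrt{\omega^2(\dot x)+2L_0(x,\dot x)}\,\Big)\,\d s
\]
(the analogue of \cite[Lemma~5.4]{CaFlSa08}). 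Global hyperbolicity confines such a causal curve to the diamond determined by $p$ and its endpoint on $\gamma$, whose intersection with $S$ is compact by \cite[Theorem~2.44]{Minguz19}. The remaining link is to bound $J(x_n)$ in terms of the level $c$; this traps $x_n$ in a fixed compact subset of $S$ and, with it, bounds $Q(\dot z_n)$, yielding the $c$-boundedness with no growth assumption on $\omega$.

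The hard part is exactly this last link --- estimating the arrival time $J(x_n)$ by the level $c$ --- which in \cite{CaFlSa08} is the content of the compactness lemma \cite[Lemma~5.5]{CaFlSa08} and intertwines the causal confinement with the action estimate. Reproducing it here demands care with the non-reversibility of $F$: completeness is available in only one time-direction and the causal diamonds of a proper cone structure are not symmetric under time reversal, so the decisive checks are that \cite[Theorem~2.44]{Minguz19} still furnishes the compactness of $S\cap J^-(q)$ and that one-directional $F$-completeness, matched to the time-orientation of the connecting problem, can stand in for the Riemannian completeness of the Cauchy slice used in \cite{CaFlSa08}. This is also the place where the $2$-homogeneity is indispensable, since it makes the identity of the second paragraph exact and thereby permits dropping \eqref{sublinear}.
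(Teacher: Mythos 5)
Your proposal follows essentially the same route as the paper, which itself justifies this theorem only by the sketch in Remark~\ref{noB3}: pass to the global splitting of Lemma~\ref{ghtosplitting}, reduce pseudocoercivity to $c$-boundedness of $Q(\dot z)$, and transplant the argument of \cite[Theorem 5.1]{CaFlSa08} using the arrival-time functional $J$, the compactness of $S\cap J^-(q)$ from \cite[Theorem 2.44]{Minguz19}, and forward or backward completeness of $\sqrt{L_0}$ in place of Riemannian completeness. The ``hard link'' you flag (bounding $J(x_n)$ by the level $c$, the analogue of \cite[Lemma 5.5]{CaFlSa08}) is exactly the step the paper also leaves to a ``close inspection'' of \cite{CaFlSa08}, so your account is no less complete than the paper's own.
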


\begin{remark}
	In light of Theorem~\ref{th:fromGH-toPseudoCoerciviness},  it becomes important to give conditions ensuring that $L_0$ is the square of a Finsler metric on $S$.
	A first observation is that $L_0$ is non-negative and \eqref{L0convex} holds if, for each $x\in S$
	\begin{equation*}
		\lambda(x) -\max_{\nu\in T_x S, \|\nu\|=1} 2Q^2_x(\nu)>0,
	\end{equation*}
	where $\lambda(x)$ is defined in \eqref{eq:monotone} (see \cite[Remark 2.14]{caponio2023-calcvar}).

	We also notice that, if $\mathcal O_0:=\mathcal O\cap TS$, satisfies, relatively to $TS$, the same properties satisfied 
	by $\mathcal O$ in  Remark~\ref{weakFinsler}-(a'), then \eqref{L0convex} holds  if
	\begin{equation}
		\label{strong}
		\inf_{v\in\mathcal O_x}\left(\min_{\nu\in T_x S, \|\nu\|=1}\big(\partial_{vv}L_c(x,v)[\nu,\nu]- 2 Q^2_x(\nu)\big)\right)>0.
	\end{equation}
	Moreover, in this case,  $\sqrt{L_0}$ in \eqref{global} is a Finsler metric on $S$
	such that $L_0$ is of class $C^1$.
	Indeed, from \eqref{HessLc}  and \eqref{strong} we immediately get that
	$\partial_{vv}L(x,v)|_{T_xS\times T_x S}$ is a positive definite bilinear form,
	for every $v \in T_xS\cap \mathcal O_0$.
	Therefore,  recalling that $L_0=L|_{TS}$ and it is fiberwise positively  homogeneous, we have that $L_0(v)\geq 0$ for all $v\in  \mathcal O_0$ and then on $TS$ by density of $\mathcal O_0$ in $TS$. Arguing as in Remark~\ref{classicalFinsler}, we then conclude 
	that $\sqrt{L_0}$ is a Finsler metric.

	Actually, in this last setting, \eqref{strong} is also a necessary condition for $L_0$ being the square of a Finsler metric. In fact, let $\{e_1,\dots,e_m\}\subset T_xS$ be an orthonormal basis of $T_xS$
	with respect to the auxiliary Riemannian metric $g$.
	Using this basis,
	we can write the one-form $\omega\colon T_xS \to \mathbb{R}$
	given by $Q|_{T_xS}$ as $(\omega_1,\dots,\omega_m)$.
	Let us denote by 
	$g_0(v)_{ij}$ the vertical Hessian matrix of $L_0$ in $v\in T_xS\cap \mathcal O_x$
	with respect to this basis.
	Similarly, we denote by $g_c(v)_{ij}$ the vertical Hessian matrix of $L_c$
	restricted to $T_xS$.
	With this notation, 
	first we notice that, $g_0(v)_{ij}$ has  $m-1$ positive eigenvalues,
	since it coincides with $g_c(v)_{ij}$ on $\mathrm{ker}(\omega)$.
	By~\cite[Proposition 11.2.1]{BaChSh00}, applied to the vector  $i\sqrt{2}(w_1, \ldots, w_m)\in \mathbb C^m$,
	we have 
	\[
		\mathrm{det}(g_0(v)_{ij})=\mathrm{det}\big(g_c(v)_{ij} -2\omega_i\omega_j\big)
		=\big(1 - 2g_c(v)^{ih}\omega_h\omega_i\big)\mathrm{det}(g_c(v)_{ij}),
	\]
	where $g_c(v)^{ij}$ denotes the inverse matrix of $g_c(v)_{ij}$.
	Since $g_c(v)_{ij}$ is positive definite,
	then $g_0(v)_{ij}$ is positive definite if and only if
	$1-2g_c(v)^{ih}\omega_h\omega_i > 0$,
	namely if and only if the norm of $\omega$
	with respect to $g_c(v)$ is strictly less than $1/2$
	for every $v\in T_xS\cap \mathcal O_x$.
\end{remark}

Let us now analyze the converse situation,
i.e. we assume now that $\mathcal J_{p,r}$ is pseudocoercive for all $p, r\in M$
and we prove that global hyperbolicity holds.
We recall (see, e.g., \cite[\S 2.1]{Minguz19})
that an absolutely continuous curve $\gamma\colon [a,b] \to M$ is \emph{causal} 
if $\dot \gamma (t)\in \mathcal C_{\gamma(t)}$,
for a.e. $t\in[a,b]$.
For any $p \in M$,
we set
\[	J^+(p) \coloneqq \left\{
		\begin{aligned}
			r \in M:
			r = p &
			\text{ or there exists a causal curve } \gamma\colon [0,1] \to M\\
					&
					\text{such that $\gamma(0) = p$ and $\gamma(1) = r$
					}
				\end{aligned}
			\right\},
		\]
		and, analogously, we define
		\begin{equation}\label{Jp}
			J^-(p) \coloneqq \left\{
				\begin{aligned}
					r \in M:
					r = p &
					\text{ or there exists a causal curve } \gamma\colon [0,1] \to M\\
							&
							\text{such that $\gamma(0) = r$ and $\gamma(1) = p$
							}
						\end{aligned}
					\right\}.
				\end{equation}
				We call \emph{causal diamond} 
				a set given by $J^+(p)\cap J^-(r)$,
				for some $p,r \in M$.

				According to \cite[Corollary 2.4]{Minguz19},
				global hyperbolicity on a proper  cone structure $\mathcal C$
				is equivalent to the non-existence of absolutely continuous closed causal curves
				plus compactness of every causal diamond.
				We use this characterization to prove the next result
				that extends to Lorentz-Finsler stationary spacetimes \cite[Proposition B1]{giannoni1999}.
				\begin{theorem}
					\label{pseudocoercivity--gh}
					Let $(M,L)$ be a Finsler spacetime 
					such that Assumption~\ref{ass:L} holds.
					If $\mathcal J_{p,r}$ is pseudocoercive for all $p, r\in M$,
					then $(M,L)$ is globally hyperbolic.
				\end{theorem}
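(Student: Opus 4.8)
The plan is to verify the two conditions characterizing global hyperbolicity of a proper cone structure given by \cite[Corollary 2.4]{Minguz19}: the absence of absolutely continuous closed causal curves, and the compactness of every causal diamond $J^+(p)\cap J^-(r)$. The bridge between causality and the variational setting is the following reparametrization observation. If $\sigma\colon[0,1]\to M$ is a future causal curve from $a$ to $b$, then $\dot\sigma(s)\in\mathcal C_{\sigma(s)}$ for a.e.\ $s$, so that $Q(\dot\sigma)<0$ a.e.\ and $L(\sigma,\dot\sigma)\le 0$ a.e. Setting $\Phi(s)=\big(\int_0^1 Q(\dot\sigma)\,\d\tau\big)^{-1}\int_0^s Q(\dot\sigma)\,\d\tau$, the map $\Phi$ is an increasing homeomorphism of $[0,1]$ fixing the endpoints, and the reparametrized curve $\hat\sigma\coloneqq\sigma\circ\Phi^{-1}$ has constant Noether charge $Q(\dot{\hat\sigma})=\int_0^1 Q(\dot\sigma)\,\d\tau<0$; hence $\hat\sigma\in\mathcal N_{a,b}$. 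By the $2$--homogeneity of $L$ in the velocities, $L(\hat\sigma,\dot{\hat\sigma})=\big((\Phi^{-1})'\big)^2\,L(\sigma,\dot\sigma)\le 0$, so $\mathcal L(\hat\sigma)\le 0$ and therefore $\hat\sigma\in\mathcal J^0_{a,b}\coloneqq\{z\in\mathcal N_{a,b}:\mathcal J_{a,b}(z)\le 0\}$.

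Next I would rule out closed causal curves. If $\gamma$ were a closed causal curve based at $p$, traversing it $n$ times and reparametrizing as above yields a curve $\hat\gamma_n\in\mathcal N_{p,p}$ with $\mathcal L(\hat\gamma_n)\le 0$ and constant Noether charge $Q(\dot{\hat\gamma}_n)=n\int_0^1 Q(\dot\gamma)\,\d s\to-\infty$, since $\int_0^1 Q(\dot\gamma)\,\d s<0$. Thus $(\hat\gamma_n)_n\subset\mathcal J^0_{p,p}$ while $|Q(\dot{\hat\gamma}_n)|\to+\infty$, contradicting Remark~\ref{cbounded}, which guarantees that $Q(\dot z)$ is bounded on each sublevel of the pseudocoercive functional $\mathcal J_{p,p}$. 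Hence no closed causal curve exists.

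For the compactness of a causal diamond, I would take a sequence $(q_n)_n\subset J^+(p)\cap J^-(r)$, choose future causal curves $\sigma_n\colon p\to q_n$ and $\rho_n\colon q_n\to r$, and concatenate them into a future causal curve $c_n\colon p\to r$. Reparametrizing $c_n$ as above gives $\hat c_n\in\mathcal J^0_{p,r}$, which is precompact by pseudocoercivity; hence, up to a subsequence, $\hat c_n$ converges uniformly to some $\hat c\in\mathcal N_{p,r}$, and all the images $\hat c_n([0,1])$ lie in a common compact set $W\subset M$. Since the concatenation point $q_n$ lies on $\hat c_n([0,1])=c_n([0,1])\subset W$, we may extract $q_{n_k}\to q_\infty\in W$. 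It remains to show $q_\infty\in J^+(p)\cap J^-(r)$: using Remark~\ref{classicalFinsler} and the boundedness of $Q(\dot{\hat c}_n)$ on $\mathcal J^0_{p,r}$ (Remark~\ref{cbounded}), the sequence $\hat c_n$ is bounded in $H^1$, so a subsequence converges weakly in $H^1$; since each $\mathcal C_x$ is a closed convex cone and $\mathcal C$ is continuous (see \eqref{causal}), the weak limit satisfies $\dot{\hat c}(s)\in\mathcal C_{\hat c(s)}$ a.e., i.e.\ $\hat c$ is causal. Writing $q_{n_k}=\hat c_{n_k}(s_{n_k})$ and passing to a further subsequence with $s_{n_k}\to s_\infty$, the restrictions $\hat c|_{[0,s_\infty]}$ and $\hat c|_{[s_\infty,1]}$ are causal curves joining $p$ to $q_\infty$ and $q_\infty$ to $r$, whence $q_\infty\in J^+(p)\cap J^-(r)$.

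The main obstacle is the handling of the reparametrization and the passage to the causal limit. On one hand, one must check that $\hat\sigma$ is a genuine $H^1$ curve even where $Q(\dot\sigma)$ degenerates to $0$ (which, by \eqref{causal}, forces $\dot\sigma$ itself to degenerate along $K$), so that $\hat\sigma$ indeed belongs to $\mathcal N_{a,b}$ and the concatenations remain admissible; this is where the uniform lower bound $L_c\ge\alpha\,g$ on compact sets from Remark~\ref{classicalFinsler} is needed to control the $H^1$-norm. On the other hand, the closedness argument identifying the weak $H^1$-limit as a causal curve relies only on the convexity and continuity of the cone structure, not on any a priori global hyperbolicity, so that no circularity is introduced; making this lower-semicontinuity step precise in the merely $C^1$ Finsler-spacetime setting is the technical heart of the proof.
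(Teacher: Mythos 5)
Your proposal is correct and follows the same skeleton as the paper's proof: both use the characterization of global hyperbolicity from \cite[Corollary 2.4]{Minguz19} (causality plus compact causal diamonds), both rely on a reparametrization lemma placing causal curves into $\mathcal N_{a,b}$ with $\mathcal J_{a,b}\le 0$ (the paper's Lemma~\ref{H1}, which parametrizes locally over the time coordinate rather than via the primitive of $Q(\dot\sigma)$ as you do), and both exclude closed causal curves by iterated concatenation and obtain diamond compactness from pseudocoercivity applied to causal curves through the points $q_n$. You deviate in two sub-steps. First, to rule out closed causal curves you invoke the $c$-boundedness of $Q(\dot z)$ on sublevels (Remark~\ref{cbounded}) together with $Q(\dot{\hat\gamma}_n)\to-\infty$; the paper argues more directly that the iterates $\gamma(ns \bmod 1)$ cannot admit a uniformly convergent subsequence unless $\gamma$ is constant, which needs no auxiliary result. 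Second, for the causality of the limit curve the paper simply cites the limit curve theorem \cite[Theorem 2.12]{Minguz19}, whereas you propose a self-contained weak-$H^1$/Mazur argument using closedness and convexity of the cones; this buys independence from the cited theorem at the cost of the technicalities you flag (notably, the weak limit a priori only lies in $\mathcal C_{\hat c(s)}\cup\{0\}$, so one must still handle possible degeneration of the velocity on a positive-measure set, e.g.\ by a further $Q$-based reparametrization). Both routes are sound.
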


				\medskip
				Before proving the above result we need the following lemma.
				\begin{lemma}
					\label{H1}
					Any absolutely continuous causal curve $\gamma\colon [a,b]\to M$
					admits a reparametrization on $[0,1]$
					as an $H^1$ curve with $Q(\dot \gamma(s))=\mathrm{const.}$.
				\end{lemma}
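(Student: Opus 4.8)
The plan is to reparametrize $\gamma$ by its \emph{Noether charge length}, exploiting that causality forces $Q(\dot\gamma)$ to be strictly negative almost everywhere. First I would record two consequences of the hypotheses: since $\gamma$ is continuous on the compact interval $[a,b]$, its image $W\coloneqq\gamma([a,b])$ is compact; and since $\dot\gamma(t)\in\mathcal C_{\gamma(t)}$ for a.e.\ $t$, we have $Q(\dot\gamma(t))<0$ for a.e.\ $t$. As $\gamma$ is absolutely continuous and $Q$ is a $C^1$ one-form, the function $\ell(t)\coloneqq-\int_a^t Q(\dot\gamma)\,\d\tau$ is well defined, absolutely continuous and, because its integrand $\ell'=-Q(\dot\gamma)$ is strictly positive a.e., strictly increasing, with $\ell(a)=0$ and $\Lambda\coloneqq\ell(b)\in(0,+\infty)$. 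I would then define the new parameter by inverting $\ell$: set $\phi\coloneqq\ell^{-1}(\Lambda\,\cdot\,)\colon[0,1]\to[a,b]$ and $\tilde\gamma\coloneqq\gamma\circ\phi$.

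The hard part will be to guarantee that this reparametrization is honest, i.e.\ that $\phi$, and hence $\tilde\gamma$, is absolutely continuous, so that the chain rule may be applied; merely knowing that $\ell$ is absolutely continuous and strictly increasing is not enough in general. The key is the strict inequality $\ell'>0$ a.e.: by the change-of-variables formula for monotone absolutely continuous maps, $\abs{\ell(A)}=\int_A\ell'\,\d\tau>0$ for every measurable $A$ with $\abs{A}>0$, so $\ell$ sends sets of positive measure to sets of positive measure; equivalently, $\phi=\ell^{-1}$ maps null sets to null sets. A continuous monotone map enjoying Luzin's property (N) is absolutely continuous (Banach--Zarecki), hence $\phi$ is absolutely continuous, and so is the composition $\tilde\gamma=\gamma\circ\phi$ of an absolutely continuous curve with a monotone absolutely continuous map. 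Applying the chain rule a.e.\ gives $\dot{\tilde\gamma}(s)=\phi'(s)\,\dot\gamma(\phi(s))$ with $\phi'(s)=\Lambda/\ell'(\phi(s))=-\Lambda/Q(\dot\gamma(\phi(s)))$, whence
\[
	Q(\dot{\tilde\gamma}(s))=\phi'(s)\,Q\big(\dot\gamma(\phi(s))\big)=-\Lambda
	\qquad\text{for a.e. } s\in[0,1],
\]
so the charge of the reparametrized curve is the prescribed constant.

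It remains to check that $\tilde\gamma$ is of class $H^1$, and here I would argue through $L_c$ rather than through a chart computation. Since $\tilde\gamma$ is again causal, $L(\tilde\gamma,\dot{\tilde\gamma})\le 0$ a.e., so by the definition \eqref{eq:def-Lc} of $L_c$,
\[
	L_c(\tilde\gamma,\dot{\tilde\gamma})=L(\tilde\gamma,\dot{\tilde\gamma})+Q^2(\dot{\tilde\gamma})\le Q^2(\dot{\tilde\gamma})=\Lambda^2
	\qquad\text{a.e.}
\]
On the other hand, by Remark~\ref{classicalFinsler} (cf.\ Proposition~\ref{prop:LF-satisfiies-assumptions}) there is a constant $\alpha>0$, depending only on the compact set $W$, such that $L_c(x,v)\ge\alpha\,g(v,v)$ for all $x\in W$ and $v\in T_xM$. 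Combining the two bounds yields $\norm{\dot{\tilde\gamma}(s)}^2\le\Lambda^2/\alpha$ a.e., so $\dot{\tilde\gamma}\in L^\infty\subset L^2([0,1])$ and $\tilde\gamma\in H^1([0,1],M)$ with $Q(\dot{\tilde\gamma})\equiv-\Lambda$ constant, as required. I expect the only genuinely delicate point to be the absolute continuity of the inverse reparametrization discussed above; everything else follows directly from the cone structure and the coercivity of $L_c$ on compact sets.
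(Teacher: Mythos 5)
Your proof is correct, but it follows a genuinely different route from the paper's. The paper's argument is local-to-global: using the stationary product structure of Remark~\ref{rem:stationary-local-structure}, it first reparametrizes $\gamma$ by the local time coordinate $t$, so that $\gamma(t)=(x(t),t)$ and the causality inequality \eqref{causal}, together with homogeneity, bounds $\norm{\dot x(t)}$ locally; patching gives a locally Lipschitz curve on some $[0,c]$, and in this intermediate parametrization \eqref{causal} also yields $-Q(\dot\gamma)=1-\omega(\dot x)\ge 1/2$, so the final (``further'') reparametrization making $Q(\dot\gamma)$ constant is bi-Lipschitz and the delicate point you isolate never arises. You instead perform the Noether-charge reparametrization in a single step on the original absolutely continuous curve, where $-Q(\dot\gamma)$ is positive a.e.\ but not a priori bounded away from zero; your resolution --- $\ell$ maps positive-measure sets to positive-measure sets because $\ell'>0$ a.e., hence $\ell^{-1}$ has Luzin's property (N) and is absolutely continuous by Banach--Zarecki, after which the a.e.\ chain rule gives $Q(\dot{\tilde\gamma})\equiv-\Lambda$ --- is exactly the missing ingredient and is carried out correctly. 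Your $L^\infty$ bound on $\dot{\tilde\gamma}$, obtained from $L\le 0$ on the cone and the coercivity $L_c\ge\alpha\, g$ over the compact image (Remark~\ref{classicalFinsler}), is the same estimate the paper uses in the proof of the Palais--Smale condition, so it is available here and replaces the chart-by-chart bound on $\norm{\dot x}$. In short, the paper's version is shorter and keeps the analysis elementary by choosing a good intermediate parametrization, while yours is chart-free and makes explicit the measure-theoretic content that the paper compresses into ``a further reparametrization gives the thesis''.
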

				\begin{proof}
					By the local splitting and homogeneity in \eqref{causal},
					we can use the locally defined  functions $t$ to parametrize locally $\gamma$ as $\gamma(t)=(x(t),t)$,
					so  that $t\mapsto \norm{\dot x(t)}$ is locally bounded.
					As the support of $\gamma$ is compact,
					we can patch together the locally defined reparametrization to get an $H^1$
					curve defined on an interval $[0,c]$,
					and a further reparametrization gives the thesis.
				\end{proof}
				\begin{proof}[Proof of Proposition~\ref{pseudocoercivity--gh}]
					From Lemma~\ref{H1}, there is no loss of generality in considering just $H^1$
					curves parametrized on $[0,1]$ with  $Q(\dot \gamma(s))=\mathrm{const.}$.
					Assume that there exists a 	closed causal curve $\gamma\colon[0,1]\to M$.
					We take the sequence $\gamma_n$, $n\geq 1$, 
					defined by concatenating the $n$ curves
					$\gamma_j(s)\coloneqq \gamma(n(s-j/n))$ for $s\in [j/n, (j+1)/n]$, $j=0, \dots, n-1$.
					The sequence satisfies $\mathcal J(\gamma_n)\leq 0$
					but it does not admit any uniformly converging subsequence
					in contradiction with pseucoercivity of $\mathcal J_{\gamma(0), \gamma(0)}$,
					hence $(M,L)$  must be causal.
					Let us now assume by contradiction that $J^+(p)\cap J^-(r)$ is not compact.
					Then there exists a sequence  of points $(q_n)_{n \in \mathbb{N}}\subset J^+(p)\cap J^-(r)$
					that does not admit any subsequence converging to a point in $J^+(p)\cap J^-(r)$.
					We take then  a sequence of causal curves
					$(\gamma_n)_{n \in \mathbb{N}}\subset  J^+(p)\cap J^-(r)$ 
					such that $q_n\in \gamma_n([0,1])$,
					for each $n\in \mathbb N$.
					Moreover, by Lemma~\ref{H1} we can assume that 
					the sequence
					$(\gamma_n)_{n \in \mathbb{N}}$ belongs to $\mathcal J_{p,r}$.
					Since $\mathcal{J}(\gamma_n)\le 0$ for every $n \in \mathbb{N}$,
					by pseudocoercivity $(\gamma_n)_{n \in \mathbb{N}}$
					admits a uniformly converging subsequence $(\gamma_{n_k})_{k}$.
					The uniform limit is then a causal curve $\gamma\colon [0,1]\to M$ connecting $p$ to $r$, by theorem~\cite[Theorem 2.12]{Minguz19}.
					This implies  
					that  $(q_{n_k})_{k}$ must admit a converging subsequence to a point in $J^+(p)\cap J^-(r)$,
					which is a contradiction.
				\end{proof}
\end{appendices}


\begin{thebibliography}{78}
	\ifx \bisbn   \undefined \def \bisbn  #1{ISBN #1}\fi
	\ifx \binits  \undefined \def \binits#1{#1}\fi
	\ifx \bauthor  \undefined \def \bauthor#1{#1}\fi
	\ifx \batitle  \undefined \def \batitle#1{#1}\fi
	\ifx \bjtitle  \undefined \def \bjtitle#1{#1}\fi
	\ifx \bvolume  \undefined \def \bvolume#1{\textbf{#1}}\fi
	\ifx \byear  \undefined \def \byear#1{#1}\fi
	\ifx \bissue  \undefined \def \bissue#1{#1}\fi
	\ifx \bfpage  \undefined \def \bfpage#1{#1}\fi
	\ifx \blpage  \undefined \def \blpage #1{#1}\fi
	\ifx \burl  \undefined \def \burl#1{\textsf{#1}}\fi
	\ifx \doiurl  \undefined \def \doiurl#1{\url{https://doi.org/#1}}\fi
	\ifx \betal  \undefined \def \betal{\textit{et al.}}\fi
	\ifx \binstitute  \undefined \def \binstitute#1{#1}\fi
	\ifx \binstitutionaled  \undefined \def \binstitutionaled#1{#1}\fi
	\ifx \bctitle  \undefined \def \bctitle#1{#1}\fi
	\ifx \beditor  \undefined \def \beditor#1{#1}\fi
	\ifx \bpublisher  \undefined \def \bpublisher#1{#1}\fi
	\ifx \bbtitle  \undefined \def \bbtitle#1{#1}\fi
	\ifx \bedition  \undefined \def \bedition#1{#1}\fi
	\ifx \bseriesno  \undefined \def \bseriesno#1{#1}\fi
	\ifx \blocation  \undefined \def \blocation#1{#1}\fi
	\ifx \bsertitle  \undefined \def \bsertitle#1{#1}\fi
	\ifx \bsnm \undefined \def \bsnm#1{#1}\fi
	\ifx \bsuffix \undefined \def \bsuffix#1{#1}\fi
	\ifx \bparticle \undefined \def \bparticle#1{#1}\fi
	\ifx \barticle \undefined \def \barticle#1{#1}\fi
	\bibcommenthead
	\ifx \bconfdate \undefined \def \bconfdate #1{#1}\fi
	\ifx \botherref \undefined \def \botherref #1{#1}\fi
	\ifx \url \undefined \def \url#1{\textsf{#1}}\fi
	\ifx \bchapter \undefined \def \bchapter#1{#1}\fi
	\ifx \bbook \undefined \def \bbook#1{#1}\fi
	\ifx \bcomment \undefined \def \bcomment#1{#1}\fi
	\ifx \oauthor \undefined \def \oauthor#1{#1}\fi
	\ifx \citeauthoryear \undefined \def \citeauthoryear#1{#1}\fi
	\ifx \endbibitem  \undefined \def \endbibitem {}\fi
	\ifx \bconflocation  \undefined \def \bconflocation#1{#1}\fi
	\ifx \arxivurl  \undefined \def \arxivurl#1{\textsf{#1}}\fi
	\csname PreBibitemsHook\endcsname

	\bibitem[\protect\citeauthoryear{Ma\~{n}\'{e}}{1997}]{mane97}
	\begin{barticle}
		\bauthor{\bsnm{Ma\~{n}\'{e}}, \binits{R.}}:
		\batitle{Lagrangian flows: the dynamics of globally minimizing orbits}.
		\bjtitle{Bol. Soc. Brasil. Mat. (N.S.)}
		\bvolume{28}(\bissue{2}),
		\bfpage{141}--\blpage{153}
		(\byear{1997})
		\doiurl{10.1007/BF01233389}
	\end{barticle}
	\endbibitem

	\bibitem[\protect\citeauthoryear{Abbondandolo}{2013}]{Abbondandolo2013}
	\begin{barticle}
		\bauthor{\bsnm{Abbondandolo}, \binits{A.}}:
		\batitle{Lectures on the free period {L}agrangian action functional}.
		\bjtitle{J. Fixed Point Theory Appl.}
		\bvolume{13}(\bissue{2}),
		\bfpage{397}--\blpage{430}
		(\byear{2013})
		\doiurl{10.1007/s11784-013-0128-1}
	\end{barticle}
	\endbibitem

	\bibitem[\protect\citeauthoryear{Contreras et~al.}{1997}]{CoDeIt97}
	\begin{barticle}
		\bauthor{\bsnm{Contreras}, \binits{G.}},
		\bauthor{\bsnm{Delgado}, \binits{J.}},
		\bauthor{\bsnm{Iturriaga}, \binits{R.}}:
		\batitle{Lagrangian flows: the dynamics of globally minimizing orbits. {II}}.
		\bjtitle{Bol. Soc. Brasil. Mat. (N.S.)}
		\bvolume{28}(\bissue{2}),
		\bfpage{155}--\blpage{196}
		(\byear{1997})
		\doiurl{10.1007/BF01233390}
	\end{barticle}
	\endbibitem

	\bibitem[\protect\citeauthoryear{Contreras}{2006}]{Contreras2006Calcvar}
	\begin{barticle}
		\bauthor{\bsnm{Contreras}, \binits{G.}}:
		\batitle{The {P}alais-{S}male condition on contact type energy levels for
		convex {L}agrangian systems}.
		\bjtitle{Calc. Var. Partial Differential Equations}
		\bvolume{27}(\bissue{3}),
		\bfpage{321}--\blpage{395}
		(\byear{2006})
		\doiurl{10.1007/s00526-005-0368-z}
	\end{barticle}
	\endbibitem

	\bibitem[\protect\citeauthoryear{Corona}{2020}]{Corona2020JFPT}
	\begin{botherref}
		\oauthor{\bsnm{Corona}, \binits{D.}}:
		A multiplicity result for {E}uler-{L}agrange orbits satisfying the conormal
		boundary conditions.
		J. Fixed Point Theory Appl.
		\textbf{22}(3)
		(2020)
		\doiurl{10.1007/s11784-020-00795-4}
	\end{botherref}
	\endbibitem

	\bibitem[\protect\citeauthoryear{Corona and
	Giannoni}{2020}]{Corona2020Symmetry}
	\begin{barticle}
		\bauthor{\bsnm{Corona}, \binits{D.}},
		\bauthor{\bsnm{Giannoni}, \binits{F.}}:
		\batitle{A {{New Approach}} for {{Euler-Lagrange Orbits}} on {{Compact
		Manifolds}} with {{Boundary}}}.
		\bjtitle{Symmetry}
		\bvolume{12}(\bissue{11}),
		\bfpage{1917}
		(\byear{2020})
		\doiurl{10.3390/sym12111917}
	\end{barticle}
	\endbibitem

	\bibitem[\protect\citeauthoryear{Asselle et~al.}{2021}]{AsBeMa21}
	\begin{barticle}
		\bauthor{\bsnm{Asselle}, \binits{L.}},
		\bauthor{\bsnm{Benedetti}, \binits{G.}},
		\bauthor{\bsnm{Mazzucchelli}, \binits{M.}}:
		\batitle{Minimal boundaries in {T}onelli {L}agrangian systems}.
		\bjtitle{Int. Math. Res. Not. IMRN}
		\bvolume{2021}(\bissue{20}),
		\bfpage{15746}--\blpage{15787}
		(\byear{2021})
		\doiurl{10.1093/imrn/rnz246}
	\end{barticle}
	\endbibitem

	\bibitem[\protect\citeauthoryear{Kovner}{1990}]{kovner90}
	\begin{barticle}
		\bauthor{\bsnm{Kovner}, \binits{I.}}:
		\batitle{{Fermat principles for arbitrary space-times}}.
		\bjtitle{Astrophysical Journal}
		\bvolume{351},
		\bfpage{114}--\blpage{120}
		(\byear{1990})
		\doiurl{10.1086/168450}
	\end{barticle}
	\endbibitem

	\bibitem[\protect\citeauthoryear{Perlick}{1990}]{Perlick19901319}
	\begin{barticle}
		\bauthor{\bsnm{Perlick}, \binits{V.}}:
		\batitle{On {F}ermat's principle in general relativity. {I}. {T}he general
		case}.
		\bjtitle{Classical Quantum Gravity}
		\bvolume{7}(\bissue{8}),
		\bfpage{1319}--\blpage{1331}
		(\byear{1990})
		\doiurl{10.1088/0264-9381/7/8/011}
	\end{barticle}
	\endbibitem

	\bibitem[\protect\citeauthoryear{Levi-Civita}{1917}]{LeviCi17}
	\begin{barticle}
		\bauthor{\bsnm{Levi-Civita}, \binits{T.}}:
		\batitle{Statica einsteiniana}.
		\bjtitle{Atti della Reale Accademia dei Lincei. Rendiconti.}
		\bvolume{26},
		\bfpage{458}--\blpage{470}
		(\byear{1917})
		\doiurl{10.1007/bf02959761}
	\end{barticle}
	\endbibitem

	\bibitem[\protect\citeauthoryear{Levi-Civita}{1918}]{LeviCi18}
	\begin{barticle}
		\bauthor{\bsnm{Levi-Civita}, \binits{T.}}:
		\batitle{La teoria di {E}instein e il principio di {F}ermat}.
		\bjtitle{Nuovo Cimento}
		\bvolume{16},
		\bfpage{105}--\blpage{114}
		(\byear{1918})
		\doiurl{10.1007/bf02959761}
	\end{barticle}
	\endbibitem

	\bibitem[\protect\citeauthoryear{Pham}{1957}]{Pham57}
	\begin{barticle}
		\bauthor{\bsnm{Pham}, \binits{M.Q.}}:
		\batitle{Inductions \'{e}lectromagn\'{e}tiques en relativit\'{e}
		g\'{e}n\'{e}rale et principe de {F}ermat}.
		\bjtitle{Arch. Rational Mech. Anal.}
		\bvolume{1},
		\bfpage{54}--\blpage{80}
		(\byear{1957})
		\doiurl{10.1007/BF00297996}
	\end{barticle}
	\endbibitem

	\bibitem[\protect\citeauthoryear{Gibbons and Werner}{2008}]{GibWer08}
	\begin{botherref}
		\oauthor{\bsnm{Gibbons}, \binits{G.W.}},
		\oauthor{\bsnm{Werner}, \binits{M.C.}}:
		Applications of the {G}auss-{B}onnet theorem to gravitational lensing.
		Classical Quantum Gravity
		\textbf{25}(23)
		(2008)
		\doiurl{10.1088/0264-9381/25/23/235009}
	\end{botherref}
	\endbibitem

	\bibitem[\protect\citeauthoryear{Gibbons et~al.}{2009}]{GiHeWW09}
	\begin{botherref}
		\oauthor{\bsnm{Gibbons}, \binits{G.W.}},
		\oauthor{\bsnm{Herdeiro}, \binits{C.A.R.}},
		\oauthor{\bsnm{Warnick}, \binits{C.M.}},
		\oauthor{\bsnm{Werner}, \binits{M.C.}}:
		Stationary metrics and optical {Z}ermelo-{R}anders-{F}insler geometry.
		Phys. Rev. D
		\textbf{79}(4)
		(2009)
		\doiurl{10.1103/PhysRevD.79.044022}
	\end{botherref}
	\endbibitem

	\bibitem[\protect\citeauthoryear{Caponio et~al.}{2011a}]{CaponioMasiello2011}
	\begin{barticle}
		\bauthor{\bsnm{Caponio}, \binits{E.}},
		\bauthor{\bsnm{Javaloyes}, \binits{M.A.}},
		\bauthor{\bsnm{Masiello}, \binits{A.}}:
		\batitle{On the energy functional on {F}insler manifolds and applications to
		stationary spacetimes}.
		\bjtitle{Math. Ann.}
		\bvolume{351}(\bissue{2}),
		\bfpage{365}--\blpage{392}
		(\byear{2011})
		\doiurl{10.1007/s00208-010-0602-7}
	\end{barticle}
	\endbibitem

	\bibitem[\protect\citeauthoryear{Caponio et~al.}{2011b}]{Caponio2011}
	\begin{barticle}
		\bauthor{\bsnm{Caponio}, \binits{E.}},
		\bauthor{\bsnm{Javaloyes}, \binits{M.A.}},
		\bauthor{\bsnm{S\'{a}nchez}, \binits{M.}}:
		\batitle{On the interplay between {L}orentzian causality and {F}insler metrics
		of {R}anders type}.
		\bjtitle{Rev. Mat. Iberoam.}
		\bvolume{27}(\bissue{3}),
		\bfpage{919}--\blpage{952}
		(\byear{2011})
		\doiurl{10.4171/RMI/658}
	\end{barticle}
	\endbibitem

	\bibitem[\protect\citeauthoryear{Pham}{1956}]{Quan56}
	\begin{barticle}
		\bauthor{\bsnm{Pham}, \binits{M.Q.}}:
		\batitle{Projections des g\'{e}od\'{e}siques de longueur nulle et rayons
		\'{e}lectromagn\'{e}tiques dans un milieu en mouvement permanent}.
		\bjtitle{C. R. Acad. Sci. Paris}
		\bvolume{242},
		\bfpage{875}--\blpage{878}
		(\byear{1956})
	\end{barticle}
	\endbibitem

	\bibitem[\protect\citeauthoryear{Uhlenbeck}{1975}]{Uhlenb75}
	\begin{barticle}
		\bauthor{\bsnm{Uhlenbeck}, \binits{K.}}:
		\batitle{A {M}orse theory for geodesics on a {L}orentz manifold}.
		\bjtitle{Topology}
		\bvolume{14},
		\bfpage{69}--\blpage{90}
		(\byear{1975})
		\doiurl{10.1016/0040-9383(75)90037-3}
	\end{barticle}
	\endbibitem

	\bibitem[\protect\citeauthoryear{Fortunato et~al.}{1995}]{fortunato1995}
	\begin{barticle}
		\bauthor{\bsnm{Fortunato}, \binits{D.}},
		\bauthor{\bsnm{Giannoni}, \binits{F.}},
		\bauthor{\bsnm{Masiello}, \binits{A.}}:
		\batitle{A {F}ermat principle for stationary space-times and applications to
		light rays}.
		\bjtitle{J. Geom. Phys.}
		\bvolume{15}(\bissue{2}),
		\bfpage{159}--\blpage{188}
		(\byear{1995})
		\doiurl{10.1016/0393-0440(94)00011-R}
	\end{barticle}
	\endbibitem

	\bibitem[\protect\citeauthoryear{Antonacci and Piccione}{1996}]{AntPic96}
	\begin{barticle}
		\bauthor{\bsnm{Antonacci}, \binits{F.}},
		\bauthor{\bsnm{Piccione}, \binits{P.}}:
		\batitle{A {F}ermat principle on {L}orentzian manifolds and applications}.
		\bjtitle{Appl. Math. Lett.}
		\bvolume{9}(\bissue{2}),
		\bfpage{91}--\blpage{95}
		(\byear{1996})
		\doiurl{10.1016/0893-9659(96)00019-5}
	\end{barticle}
	\endbibitem

	\bibitem[\protect\citeauthoryear{Perlick and Piccione}{1998}]{PerPic98}
	\begin{barticle}
		\bauthor{\bsnm{Perlick}, \binits{V.}},
		\bauthor{\bsnm{Piccione}, \binits{P.}}:
		\batitle{A general-relativistic {F}ermat principle for extended light sources
		and extended receivers}.
		\bjtitle{Gen. Relativity Gravitation}
		\bvolume{30}(\bissue{10}),
		\bfpage{1461}--\blpage{1476}
		(\byear{1998})
		\doiurl{10.1023/A:1018861024445}
	\end{barticle}
	\endbibitem

	\bibitem[\protect\citeauthoryear{Frolov}{2013}]{Frolov:2013sxa}
	\begin{botherref}
		\oauthor{\bsnm{Frolov}, \binits{V.P.}}:
		Generalized {F}ermat's principle and action for light rays in a curved
		spacetime.
		Phys. Rev. D
		\textbf{88}
		(2013)
		\doiurl{10.1103/PhysRevD.88.064039}
	\end{botherref}
	\endbibitem

	\bibitem[\protect\citeauthoryear{Caponio et~al.}{in press}]{CaJaSaIP}
	\begin{botherref}
		\oauthor{\bsnm{Caponio}, \binits{E.}},
		\oauthor{\bsnm{Javaloyes}, \binits{M.A.}},
		\oauthor{\bsnm{S{\'a}nchez}, \binits{M.}}:
		{W}ind {F}inslerian structures: from {Z}ermelo's navigation to the causality of
		spacetimes.
		Memoirs AMS
		(in press)
		{\href{https://arxiv.org/abs/1407.5494v5}{{arXiv:1407.5494v5}}}
		{[math.DG]}
	\end{botherref}
	\endbibitem

	\bibitem[\protect\citeauthoryear{Giannoni et~al.}{1998}]{giannoni1998-calcvar}
	\begin{barticle}
		\bauthor{\bsnm{Giannoni}, \binits{F.}},
		\bauthor{\bsnm{Masiello}, \binits{A.}},
		\bauthor{\bsnm{Piccione}, \binits{P.}}:
		\batitle{A timelike extension of {F}ermat's principle in {G}eneral {R}elativity
		and applications}.
		\bjtitle{Calc. Var. Partial Differential Equations}
		\bvolume{6}(\bissue{3}),
		\bfpage{263}--\blpage{283}
		(\byear{1998})
		\doiurl{10.1007/s005260050091}
	\end{barticle}
	\endbibitem

	\bibitem[\protect\citeauthoryear{Giannoni et~al.}{1997}]{Giannoni1997375}
	\begin{barticle}
		\bauthor{\bsnm{Giannoni}, \binits{F.}},
		\bauthor{\bsnm{Masiello}, \binits{A.}},
		\bauthor{\bsnm{Piccione}, \binits{P.}}:
		\batitle{A variational theory for light rays in stably causal {L}orentzian
		manifolds: regularity and multiplicity results}.
		\bjtitle{Comm. Math. Phys.}
		\bvolume{187}(\bissue{2}),
		\bfpage{375}--\blpage{415}
		(\byear{1997})
		\doiurl{10.1007/s002200050141}
	\end{barticle}
	\endbibitem

	\bibitem[\protect\citeauthoryear{Giannoni
	et~al.}{1998}]{giannoni1998-AnnInstPoin}
	\begin{barticle}
		\bauthor{\bsnm{Giannoni}, \binits{F.}},
		\bauthor{\bsnm{Masiello}, \binits{A.}},
		\bauthor{\bsnm{Piccione}, \binits{P.}}:
		\batitle{A {M}orse theory for light rays on stably causal {L}orentzian
		manifolds}.
		\bjtitle{Ann. Inst. H. Poincar\'{e} Phys. Th\'{e}or.}
		\bvolume{69}(\bissue{4}),
		\bfpage{359}--\blpage{412}
		(\byear{1998})
	\end{barticle}
	\endbibitem

	\bibitem[\protect\citeauthoryear{Giannoni}{1999}]{Giannoni1999849}
	\begin{barticle}
		\bauthor{\bsnm{Giannoni}, \binits{F.}}:
		\batitle{Global variational methods in general relativity with applications to
		gravitational lensing}.
		\bjtitle{Ann. Phys.}
		\bvolume{8}(\bissue{10}),
		\bfpage{849}--\blpage{859}
		(\byear{1999})
		\doiurl{10.1002/(SICI)1521-3889(199912)8:10<849::AID-ANDP849>3.0.CO;2-D}
	\end{barticle}
	\endbibitem

	\bibitem[\protect\citeauthoryear{Giannoni et~al.}{2000}]{giannoni2000-JGeoPhy}
	\begin{barticle}
		\bauthor{\bsnm{Giannoni}, \binits{F.}},
		\bauthor{\bsnm{Masiello}, \binits{A.}},
		\bauthor{\bsnm{Piccione}, \binits{P.}}:
		\batitle{A {M}orse theory for massive particles and photons in {G}eneral
		{R}elativity}.
		\bjtitle{J. Geom. Phys.}
		\bvolume{35}(\bissue{1}),
		\bfpage{1}--\blpage{34}
		(\byear{2000})
		\doiurl{10.1016/S0393-0440(99)00045-5}
	\end{barticle}
	\endbibitem

	\bibitem[\protect\citeauthoryear{Giannoni et~al.}{2002}]{Giannoni2002563}
	\begin{barticle}
		\bauthor{\bsnm{Giannoni}, \binits{F.}},
		\bauthor{\bsnm{Masiello}, \binits{A.}},
		\bauthor{\bsnm{Piccione}, \binits{P.}}:
		\batitle{The {F}ermat principle in general relativity and applications}.
		\bjtitle{J. Math. Phys.}
		\bvolume{43}(\bissue{1}),
		\bfpage{563}--\blpage{596}
		(\byear{2002})
		\doiurl{10.1063/1.1415428}
	\end{barticle}
	\endbibitem

	\bibitem[\protect\citeauthoryear{Caponio et~al.}{2010}]{CaJaMa10}
	\begin{barticle}
		\bauthor{\bsnm{Caponio}, \binits{E.}},
		\bauthor{\bsnm{Javaloyes}, \binits{M.A.}},
		\bauthor{\bsnm{Masiello}, \binits{A.}}:
		\batitle{Morse theory of causal geodesics in a stationary spacetime via {M}orse
		theory of geodesics of a {F}insler metric}.
		\bjtitle{Ann. Inst. H. Poincar\'{e} C Anal. Non Lin\'{e}aire}
		\bvolume{27}(\bissue{3}),
		\bfpage{857}--\blpage{876}
		(\byear{2010})
		\doiurl{10.1016/j.anihpc.2010.01.001}
	\end{barticle}
	\endbibitem

	\bibitem[\protect\citeauthoryear{Hod}{2018}]{Hod:2018elr}
	\begin{botherref}
		\oauthor{\bsnm{Hod}, \binits{S.}}:
		{F}ermat's principle in black-hole spacetimes.
		Int. J. Mod. Phys. D
		\textbf{27}(14)
		(2018)
		\doiurl{10.1142/S0218271818470259}
		{\href{https://arxiv.org/abs/2101.06909}{{arXiv:2101.06909}}}
		{[gr-qc]}
	\end{botherref}
	\endbibitem

	\bibitem[\protect\citeauthoryear{Faraoni}{1992}]{Faraoni1992425}
	\begin{barticle}
		\bauthor{\bsnm{Faraoni}, \binits{V.}}:
		\batitle{Nonstationary gravitational lenses and the {F}ermat principle}.
		\bjtitle{Astrophysical Journal}
		\bvolume{398}(\bissue{2}),
		\bfpage{425}--\blpage{428}
		(\byear{1992})
		\doiurl{10.1086/171866}
	\end{barticle}
	\endbibitem

	\bibitem[\protect\citeauthoryear{Nandor and Helliwell}{1996}]{Nandor199645}
	\begin{barticle}
		\bauthor{\bsnm{Nandor}, \binits{M.J.}},
		\bauthor{\bsnm{Helliwell}, \binits{T.M.}}:
		\batitle{Fermat's principle and multiple imaging by gravitational lenses}.
		\bjtitle{Amer. J. Phys.}
		\bvolume{64}(\bissue{1}),
		\bfpage{45}--\blpage{49}
		(\byear{1996})
		\doiurl{10.1119/1.18291}
	\end{barticle}
	\endbibitem

	\bibitem[\protect\citeauthoryear{Frittelli et~al.}{2002}]{Frittelli20021230071}
	\begin{botherref}
		\oauthor{\bsnm{Frittelli}, \binits{S.}},
		\oauthor{\bsnm{Kling}, \binits{T.P.}},
		\oauthor{\bsnm{Newman}, \binits{E.T.}}:
		Fermat potentials for nonperturbative gravitational lensing.
		Phys. Rev. D
		\textbf{65}(12)
		(2002)
		\doiurl{10.1103/PhysRevD.65.123007}
	\end{botherref}
	\endbibitem

	\bibitem[\protect\citeauthoryear{Sereno}{2003}]{Sereno:2003tk}
	\begin{botherref}
		\oauthor{\bsnm{Sereno}, \binits{M.}}:
		Gravitational lensing in metric theories of gravity.
		Phys. Rev. D
		\textbf{67}(6)
		(2003)
		\doiurl{10.1103/PhysRevD.67.064007}
	\end{botherref}
	\endbibitem

	\bibitem[\protect\citeauthoryear{Giamb\`o et~al.}{2004}]{Giambo2004117}
	\begin{barticle}
		\bauthor{\bsnm{Giamb\`o}, \binits{R.}},
		\bauthor{\bsnm{Giannoni}, \binits{F.}},
		\bauthor{\bsnm{Piccione}, \binits{P.}}:
		\batitle{Gravitational lensing in general relativity via bifurcation theory}.
		\bjtitle{Nonlinearity}
		\bvolume{17}(\bissue{1}),
		\bfpage{117}--\blpage{132}
		(\byear{2004})
		\doiurl{10.1088/0951-7715/17/1/008}
	\end{barticle}
	\endbibitem

	\bibitem[\protect\citeauthoryear{Virbhadra and Ellis}{2000}]{Virbhadra:1999nm}
	\begin{botherref}
		\oauthor{\bsnm{Virbhadra}, \binits{K.S.}},
		\oauthor{\bsnm{Ellis}, \binits{G.F.R.}}:
		Schwarzschild black hole lensing.
		Phys. Rev. D
		\textbf{62}(8)
		(2000)
		\doiurl{10.1103/PhysRevD.62.084003}
	\end{botherref}
	\endbibitem

	\bibitem[\protect\citeauthoryear{Halla and Perlick}{2020}]{Halla2020}
	\begin{botherref}
		\oauthor{\bsnm{Halla}, \binits{M.}},
		\oauthor{\bsnm{Perlick}, \binits{V.}}:
		Application of the {G}auss-{B}onnet theorem to lensing in the {NUT} metric.
		Gen. Relativity Gravitation
		\textbf{52}(11)
		(2020)
		\doiurl{10.1007/s10714-020-02766-z}
	\end{botherref}
	\endbibitem

	\bibitem[\protect\citeauthoryear{Annila}{2011}]{Annila20112944}
	\begin{barticle}
		\bauthor{\bsnm{Annila}, \binits{A.}}:
		\batitle{Least-time paths of light}.
		\bjtitle{Monthly Notices of the Royal Astronomical Society}
		\bvolume{416}(\bissue{4}),
		\bfpage{2944}--\blpage{2948}
		(\byear{2011})
		\doiurl{10.1111/j.1365-2966.2011.19242.x}
	\end{barticle}
	\endbibitem

	\bibitem[\protect\citeauthoryear{Perlick}{2004}]{Perlick-lrr}
	\begin{botherref}
		\oauthor{\bsnm{Perlick}, \binits{V.}}:
		Gravitational lensing from a spacetime perspective.
		Living Reviews in Relativity
		\textbf{7}(1)
		(2004)
		\doiurl{10.12942/lrr-2004-9}
	\end{botherref}
	\endbibitem

	\bibitem[\protect\citeauthoryear{Perlick}{2006}]{Perlick2006365}
	\begin{barticle}
		\bauthor{\bsnm{Perlick}, \binits{V.}}:
		\batitle{Fermat principle in {F}insler spacetimes}.
		\bjtitle{Gen. Relativity Gravitation}
		\bvolume{38}(\bissue{2}),
		\bfpage{365}--\blpage{380}
		(\byear{2006})
		\doiurl{10.1007/s10714-005-0225-6}
	\end{barticle}
	\endbibitem

	\bibitem[\protect\citeauthoryear{Duval}{2008}]{Duval08}
	\begin{barticle}
		\bauthor{\bsnm{Duval}, \binits{C.}}:
		\batitle{Finsler spinoptics}.
		\bjtitle{Comm. Math. Phys.}
		\bvolume{283}(\bissue{3}),
		\bfpage{701}--\blpage{727}
		(\byear{2008})
		\doiurl{10.1007/s00220-008-0573-7}
	\end{barticle}
	\endbibitem

	\bibitem[\protect\citeauthoryear{Masiello}{2009}]{Masiello2009783}
	\begin{barticle}
		\bauthor{\bsnm{Masiello}, \binits{A.}}:
		\batitle{An alternative variational principle for geodesies of a {R}anders
		metric}.
		\bjtitle{Advanced Nonlinear Studies}
		\bvolume{9}(\bissue{4}),
		\bfpage{783}--\blpage{801}
		(\byear{2009})
		\doiurl{10.1515/ans-2009-0410}
	\end{barticle}
	\endbibitem

	\bibitem[\protect\citeauthoryear{Gallego~Torrom\'{e} et~al.}{2012}]{GaPiVi12}
	\begin{botherref}
		\oauthor{\bsnm{Gallego~Torrom\'{e}}, \binits{R.}},
		\oauthor{\bsnm{Piccione}, \binits{P.}},
		\oauthor{\bsnm{Vit\'{o}rio}, \binits{H.}}:
		On {F}ermat's principle for causal curves in time oriented {F}insler
		spacetimes.
		J. Math. Phys.
		\textbf{53}(12)
		(2012)
		\doiurl{10.1063/1.4765066}
	\end{botherref}
	\endbibitem

	\bibitem[\protect\citeauthoryear{Caponio and Stancarone}{2016}]{CapSta16}
	\begin{barticle}
		\bauthor{\bsnm{Caponio}, \binits{E.}},
		\bauthor{\bsnm{Stancarone}, \binits{G.}}:
		\batitle{Standard static {F}insler spacetimes}.
		\bjtitle{Int. J. Geom. Methods Mod. Phys.}
		\bvolume{13}(\bissue{4}),
		\bfpage{1650040}--\blpage{25}
		(\byear{2016})
		\doiurl{10.1142/S0219887816500407}
	\end{barticle}
	\endbibitem

	\bibitem[\protect\citeauthoryear{Caponio and Stancarone}{2018}]{caponio2018}
	\begin{botherref}
		\oauthor{\bsnm{Caponio}, \binits{E.}},
		\oauthor{\bsnm{Stancarone}, \binits{G.}}:
		On {F}insler spacetimes with a timelike {K}illing vector field.
		Classical Quantum Gravity
		\textbf{35}(8)
		(2018)
		\doiurl{10.1088/1361-6382/aab0d9}
	\end{botherref}
	\endbibitem

	\bibitem[\protect\citeauthoryear{Minguzzi}{2019}]{Minguz19}
	\begin{barticle}
		\bauthor{\bsnm{Minguzzi}, \binits{E.}}:
		\batitle{Causality theory for closed cone structures with applications}.
		\bjtitle{Reviews in Mathematical Physics}
		\bvolume{31}(\bissue{5}),
		\bfpage{1930001}--\blpage{139}
		(\byear{2019})
		\doiurl{10.1142/S0129055X19300012}
	\end{barticle}
	\endbibitem

	\bibitem[\protect\citeauthoryear{Javaloyes and
	S\'{a}nchez}{2020}]{Javaloyes2020}
	\begin{botherref}
		\oauthor{\bsnm{Javaloyes}, \binits{M.A.}},
		\oauthor{\bsnm{S\'{a}nchez}, \binits{M.}}:
		On the definition and examples of cones and {F}insler spacetimes.
		Rev. R. Acad. Cienc. Exactas F\'{\i}s. Nat. Ser. A Mat. RACSAM
		\textbf{114}(1)
		(2020)
		\doiurl{10.1007/s13398-019-00736-y}
	\end{botherref}
	\endbibitem

	\bibitem[\protect\citeauthoryear{Herrera and Javaloyes}{2021}]{Herrera:2018qng}
	\begin{botherref}
		\oauthor{\bsnm{Herrera}, \binits{J.}},
		\oauthor{\bsnm{Javaloyes}, \binits{M.A.}}:
		Stationary-complete spacetimes with non-standard splittings and pre-{R}anders
		metrics.
		J. Geom. Phys.
		\textbf{163}
		(2021)
		\doiurl{10.1016/j.geomphys.2021.104120}
	\end{botherref}
	\endbibitem

	\bibitem[\protect\citeauthoryear{Caponio et~al.}{2021}]{caponio2021}
	\begin{barticle}
		\bauthor{\bsnm{Caponio}, \binits{E.}},
		\bauthor{\bsnm{Giannoni}, \binits{F.}},
		\bauthor{\bsnm{Masiello}, \binits{A.}},
		\bauthor{\bsnm{Suhr}, \binits{S.}}:
		\batitle{Connecting and closed geodesics of a {{Kropina}} metric}.
		\bjtitle{Advanced Nonlinear Studies}
		\bvolume{21}(\bissue{3}),
		\bfpage{683}--\blpage{695}
		(\byear{2021})
		\doiurl{10.1515/ans-2021-2133}
	\end{barticle}
	\endbibitem


	\bibitem[\protect\citeauthoryear{Caponio and
	Corona}{2023}]{caponio2023-calcvar}
	\begin{botherref}
		\oauthor{\bsnm{Caponio}, \binits{E.}},
		\oauthor{\bsnm{Corona}, \binits{D.}}:
		A variational setting for an indefinite {L}agrangian with an affine {N}oether
		charge.
		Calc. Var. Partial Differential Equations
		\textbf{62}(2)
		(2023)
		\doiurl{10.1007/s00526-022-02379-1}
	\end{botherref}
	\endbibitem

	\bibitem[\protect\citeauthoryear{Giannoni and Piccione}{1999}]{giannoni1999}
	\begin{barticle}
		\bauthor{\bsnm{Giannoni}, \binits{F.}},
		\bauthor{\bsnm{Piccione}, \binits{P.}}:
		\batitle{An intrinsic approach to the geodesical connectedness of stationary
		{L}orentzian manifolds}.
		\bjtitle{Comm. Anal. Geom.}
		\bvolume{7}(\bissue{1}),
		\bfpage{157}--\blpage{197}
		(\byear{1999})
		\doiurl{10.4310/CAG.1999.v7.n1.a6}
	\end{barticle}
	\endbibitem

	\bibitem[\protect\citeauthoryear{Caponio}{2002}]{caponio2002-diffgeo}
	\begin{barticle}
		\bauthor{\bsnm{Caponio}, \binits{E.}}:
		\batitle{An intrinsic {F}ermat principle on stationary {L}orentzian manifolds
		and applications}.
		\bjtitle{Differential Geom. Appl.}
		\bvolume{16}(\bissue{3}),
		\bfpage{245}--\blpage{265}
		(\byear{2002})
		\doiurl{10.1016/S0926-2245(02)00069-4}
	\end{barticle}
	\endbibitem

	\bibitem[\protect\citeauthoryear{Beem}{1970}]{Beem70}
	\begin{barticle}
		\bauthor{\bsnm{Beem}, \binits{J.K.}}:
		\batitle{Indefinite {F}insler spaces and timelike spaces}.
		\bjtitle{Canadian J. Math.}
		\bvolume{22},
		\bfpage{1035}--\blpage{1039}
		(\byear{1970})
		\doiurl{10.4153/CJM-1970-119-7}
	\end{barticle}
	\endbibitem

	\bibitem[\protect\citeauthoryear{L{\"a}mmerzahl, Perlick and Hasse}{2012}]{LaPeHa2012}
	\begin{barticle}
		\bauthor{\bsnm{L{\"a}mmerzahl}, \binits{C.}},
		\bauthor{\bsnm{Perlick}, \binits{V.}},
		\bauthor{\bsnm{Hasse}, \binits{W.}}: 	
		\batitle{Observable effects in a class of spherically symmetric static Finsler spacetimes}.
		\bjtitle{Phys. Rev. D}
		\bvolume{86},
		\bfpage{104042}
		(\byear{2012})
		\doiurl{10.1103/PhysRevD.86.104042} 
	\end{barticle}
	\endbibitem



	\bibitem[\protect\citeauthoryear{Gibbons et~al.}{2007}]{GiGoPo07}
	\begin{botherref}
		\oauthor{\bsnm{Gibbons}, \binits{G.W.}},
		\oauthor{\bsnm{Gomis}, \binits{J.}},
		\oauthor{\bsnm{Pope}, \binits{C.N.}}:
		General very special relativity is {F}insler geometry.
		Phys. Rev. D
		\textbf{76}(8)
		(2007)
		\doiurl{10.1103/PhysRevD.76.081701}
	\end{botherref}
	\endbibitem

	\bibitem[\protect\citeauthoryear{Kouretsis et~al.}{2009}]{KoStSt09}
	\begin{botherref}
		\oauthor{\bsnm{Kouretsis}, \binits{A.P.}},
		\oauthor{\bsnm{Stathakopoulos}, \binits{M.}},
		\oauthor{\bsnm{Stavrinos}, \binits{P.C.}}:
		General very special relativity in {F}insler cosmology.
		Phys. Rev. D
		\textbf{79}(10)
		(2009)
		\doiurl{10.1103/PhysRevD.79.104011}
	\end{botherref}
	\endbibitem

	\bibitem[\protect\citeauthoryear{Bogoslovsky}{1977a}]{Bogosl77}
	\begin{barticle}
		\bauthor{\bsnm{Bogoslovsky}, \binits{G.Y.}}:
		\batitle{A special-relativistic theory of the locally anisotropic space-time.
		{I}: The metric and group of motions of the anisotropic space of events.}
		\bjtitle{Il Nuovo Cimento B}
		\bvolume{40},
		\bfpage{99}--\blpage{115}
		(\byear{1977})
		\doiurl{10.1007/BF02739183}
	\end{barticle}
	\endbibitem

	\bibitem[\protect\citeauthoryear{Bogoslovsky}{1977b}]{Bogosl77a}
	\begin{barticle}
		\bauthor{\bsnm{Bogoslovsky}, \binits{G.Y.}}:
		\batitle{A special-relativistic theory of the locally anisotropic space-time.
		{II}: mechanics and electrodynamics in the anisotropic space}.
		\bjtitle{Nuovo Cimento B}
		\bvolume{40},
		\bfpage{116}--\blpage{134}
		(\byear{1977})
		\doiurl{10.1007/BF02739184}
	\end{barticle}
	\endbibitem

	\bibitem[\protect\citeauthoryear{Bogoslovsky}{1994}]{Bogosl94}
	\begin{barticle}
		\bauthor{\bsnm{Bogoslovsky}, \binits{G.Y.}}:
		\batitle{A viable model of locally anisotropic space-time and the {F}inslerian
		generalization of the relativity theory}.
		\bjtitle{Fortschr. Phys.}
		\bvolume{42}(\bissue{2}),
		\bfpage{143}--\blpage{193}
		(\byear{1994})
		\doiurl{10.1002/prop.2190420203}
	\end{barticle}
	\endbibitem

	\bibitem[\protect\citeauthoryear{Girelli et~al.}{2007}]{GiLiSi07}
	\begin{botherref}
		\oauthor{\bsnm{Girelli}, \binits{F.}},
		\oauthor{\bsnm{Liberati}, \binits{S.}},
		\oauthor{\bsnm{Sindoni}, \binits{L.}}:
		Planck-scale modified dispersion relations and {F}insler geometry.
		Phys. Rev. D
		\textbf{75}(6)
		(2007)
		\doiurl{10.1103/PhysRevD.75.064015}
	\end{botherref}
	\endbibitem

	\bibitem[\protect\citeauthoryear{Kosteleck\'{y}}{2011}]{Kostel11}
	\begin{barticle}
		\bauthor{\bsnm{Kosteleck\'{y}}, \binits{V.A.}}:
		\batitle{Riemann-{F}insler geometry and {L}orentz-violating kinematics}.
		\bjtitle{Phys. Lett. B}
		\bvolume{701}(\bissue{1}),
		\bfpage{137}--\blpage{143}
		(\byear{2011})
		\doiurl{10.1016/j.physletb.2011.05.041}
	\end{barticle}
	\endbibitem

	\bibitem[\protect\citeauthoryear{Colladay and McDonald}{2012}]{ColMcD12}
	\begin{botherref}
		\oauthor{\bsnm{Colladay}, \binits{D.}},
		\oauthor{\bsnm{McDonald}, \binits{P.}}:
		Classical {L}agrangians for momentum dependent {L}orentz violation.
		Phys. Rev. D
		\textbf{85}
		(2012)
		\doiurl{10.1103/PhysRevD.85.044042}
	\end{botherref}
	\endbibitem

	\bibitem[\protect\citeauthoryear{Kosteleck\'{y} et~al.}{2012}]{KoRuTs12}
	\begin{barticle}
		\bauthor{\bsnm{Kosteleck\'{y}}, \binits{V.A.}},
		\bauthor{\bsnm{Russell}, \binits{N.}},
		\bauthor{\bsnm{Tso}, \binits{R.}}:
		\batitle{Bipartite {R}iemann-{F}insler geometry and {L}orentz violation}.
		\bjtitle{Phys. Lett. B}
		\bvolume{716}(\bissue{3-5}),
		\bfpage{470}--\blpage{474}
		(\byear{2012})
		\doiurl{10.1016/j.physletb.2012.09.002}
	\end{barticle}
	\endbibitem

	\bibitem[\protect\citeauthoryear{Russell}{2015}]{Russel15}
	\begin{botherref}
		\oauthor{\bsnm{Russell}, \binits{N.}}:
		Finsler-like structures from {L}orentz-breaking classical particles.
		Phys. Rev. D
		\textbf{91}(4)
		(2015)
		\doiurl{10.1103/PhysRevD.91.045008}
	\end{botherref}
	\endbibitem

	\bibitem[\protect\citeauthoryear{Pfeifer and Wohlfarth}{2011}]{PfeWoh11}
	\begin{botherref}
		\oauthor{\bsnm{Pfeifer}, \binits{C.}},
		\oauthor{\bsnm{Wohlfarth}, \binits{M.N.R.}}:
		Causal structure and electrodynamics on {F}insler spacetimes.
		Phys. Rev. D
		\textbf{84}(4)
		(2011)
		\doiurl{10.1103/PhysRevD.84.044039}
		{\href{https://arxiv.org/abs/1104.1079}{{1104.1079}}}
	\end{botherref}
	\endbibitem

	\bibitem[\protect\citeauthoryear{Fuster and Pabst}{2016}]{FusPab16}
	\begin{botherref}
		\oauthor{\bsnm{Fuster}, \binits{A.}},
		\oauthor{\bsnm{Pabst}, \binits{C.}}:
		Finsler {$pp$}-waves.
		Phys. Rev. D
		\textbf{94}(10)
		(2016)
		\doiurl{10.1103/physrevd.94.104072}
	\end{botherref}
	\endbibitem

	\bibitem[\protect\citeauthoryear{Hohmann and Pfeifer}{2017}]{HohPfe17}
	\begin{botherref}
		\oauthor{\bsnm{Hohmann}, \binits{M.}},
		\oauthor{\bsnm{Pfeifer}, \binits{C.}}:
		Geodesics and the magnitude-redshift relation on cosmologically symmetric
		{F}insler spacetimes.
		Phys. Rev. D
		\textbf{95}(10)
		(2017)
		\doiurl{10.1103/physrevd.95.104021}
	\end{botherref}
	\endbibitem

	\bibitem[\protect\citeauthoryear{Voicu}{2017}]{Voicu17}
	\begin{barticle}
		\bauthor{\bsnm{Voicu}, \binits{N.}}:
		\batitle{Volume forms for time orientable {F}insler spacetimes}.
		\bjtitle{J. Geom. Phys.}
		\bvolume{112},
		\bfpage{85}--\blpage{94}
		(\byear{2017})
		\doiurl{10.1016/j.geomphys.2016.11.005}
	\end{barticle}
	\endbibitem

	\bibitem[\protect\citeauthoryear{Fuster et~al.}{2018}]{FuPaPf18}
	\begin{botherref}
		\oauthor{\bsnm{Fuster}, \binits{A.}},
		\oauthor{\bsnm{Pabst}, \binits{C.}},
		\oauthor{\bsnm{Pfeifer}, \binits{C.}}:
		Berwald spacetimes and very special relativity.
		Phys. Rev. D
		\textbf{98}(8)
		(2018)
		\doiurl{10.1103/physrevd.98.084062}
	\end{botherref}
	\endbibitem

	\bibitem[\protect\citeauthoryear{Hohmann et~al.}{2019}]{HoPfVo19}
	\begin{botherref}
		\oauthor{\bsnm{Hohmann}, \binits{M.}},
		\oauthor{\bsnm{Pfeifer}, \binits{C.}},
		\oauthor{\bsnm{Voicu}, \binits{N.}}:
		Finsler gravity action from variational completion.
		Phys. Rev. D
		\textbf{100}(6)
		(2019)
		\doiurl{10.1103/physrevd.100.064035}
	\end{botherref}
	\endbibitem

	\bibitem[\protect\citeauthoryear{Javaloyes and S\'{a}nchez}{2014}]{JavSan14a}
	\begin{botherref}
		\oauthor{\bsnm{Javaloyes}, \binits{M.A.}},
		\oauthor{\bsnm{S\'{a}nchez}, \binits{M.}}:
		Finsler metrics and relativistic spacetimes.
		Int. J. Geom. Methods Mod. Phys.
		\textbf{11}(9)
		(2014)
		\doiurl{10.1142/S0219887814600329}
	\end{botherref}
	\endbibitem

	\bibitem[\protect\citeauthoryear{Minguzzi}{2015}]{Minguz15}
	\begin{barticle}
		\bauthor{\bsnm{Minguzzi}, \binits{E.}}:
		\batitle{Light cones in {F}insler spacetime}.
		\bjtitle{Comm. Math. Phys.}
		\bvolume{334}(\bissue{3}),
		\bfpage{1529}--\blpage{1551}
		(\byear{2015})
		\doiurl{10.1007/s00220-014-2215-6}
	\end{barticle}
	\endbibitem


	\bibitem[\protect\citeauthoryear{Minguzzi}{2015}]{Minguzzi2015}
	\begin{barticle}
		\bauthor{\bsnm{Minguzzi}, \binits{E.}}:
		\batitle{Convex neighborhoods for Lipschitz connections and sprays}.
		\bjtitle{Monatsh. Math.}
		\bvolume{177},
		\bfpage{569}--\blpage{625}
		(\byear{2015})
		\doiurl{10.1007/s00605-014-0699-y} 
	\end{barticle}
	\endbibitem

	\bibitem[\protect\citeauthoryear{Caponio and Masiello}{2020}]{CapMas20}
	\begin{botherref}
		\oauthor{\bsnm{Caponio}, \binits{E.}},
		\oauthor{\bsnm{Masiello}, \binits{A.}}:
		On the analyticity of static solutions of a field equation in {F}insler
		gravity.
		Universe
		\textbf{6}(59)
		(2020)
		\doiurl{10.3390/universe6040059}
	\end{botherref}
	\endbibitem

	\bibitem[\protect\citeauthoryear{Minguzzi}{2016}]{Minguzzi2016}
	\begin{barticle}
		\bauthor{\bsnm{Minguzzi}, \binits{E.}}:
		\batitle{An equivalence of Finslerian relativistic theories}.
		\bjtitle{Math. Phys.}
		\bvolume{77},
		\bfpage{45}--\blpage{55}
		(\byear{2016})
		\doiurl{10.1016/S0034-4877(16)30004-0} 
	\end{barticle}
	\endbibitem



	\bibitem[\protect\citeauthoryear{Palais}{1963}]{palais1963-Topology}
	\begin{barticle}
		\bauthor{\bsnm{Palais}, \binits{R.S.}}:
		\batitle{Morse theory on {H}ilbert manifolds}.
		\bjtitle{Topology}
		\bvolume{2},
		\bfpage{299}--\blpage{340}
		(\byear{1963})
		\doiurl{10.1016/0040-9383(63)90013-2}
	\end{barticle}
	\endbibitem

	\bibitem[\protect\citeauthoryear{Abbondandolo and
	Schwarz}{2009}]{Abbondandolo2009}
	\begin{barticle}
		\bauthor{\bsnm{Abbondandolo}, \binits{A.}},
		\bauthor{\bsnm{Schwarz}, \binits{M.}}:
		\batitle{A smooth pseudo-gradient for the Lagrangian action functional}.
		\bjtitle{Adv. Nonlinear Stud.}
		\bvolume{9},
		\bfpage{597}--\blpage{623}
		(\byear{2009})
		\doiurl{10.1515/ans-2009-0402}
	\end{barticle}
	\endbibitem


	\bibitem[\protect\citeauthoryear{Corvellec et~al.}{1993}]{corvellec1993}
	\begin{barticle}
		\bauthor{\bsnm{Corvellec}, \binits{J.-N.}},
		\bauthor{\bsnm{Degiovanni}, \binits{M.}},
		\bauthor{\bsnm{Marzocchi}, \binits{M.}}:
		\batitle{Deformation properties for continuous functionals and critical point
		theory}.
		\bjtitle{Topol. Methods Nonlinear Anal.}
		\bvolume{1}(\bissue{1}),
		\bfpage{151}--\blpage{171}
		(\byear{1993})
		\doiurl{10.12775/TMNA.1993.012}
	\end{barticle}
	\endbibitem

	\bibitem[\protect\citeauthoryear{Fadell and Husseini}{1991}]{fadell1991}
	\begin{barticle}
		\bauthor{\bsnm{Fadell}, \binits{E.}},
		\bauthor{\bsnm{Husseini}, \binits{S.}}:
		\batitle{Category of loop spaces of open subsets in {E}uclidean space}.
		\bjtitle{Nonlinear Anal.}
		\bvolume{17}(\bissue{12}),
		\bfpage{1153}--\blpage{1161}
		(\byear{1991})
		\doiurl{10.1016/0362-546X(91)90234-R}
	\end{barticle}
	\endbibitem

	\bibitem[\protect\citeauthoryear{Minguzzi}{2017}]{Minguzzi2017}
	\begin{barticle}
		\bauthor{\bsnm{Minguzzi}, \binits{E.}}:
		\batitle{Affine sphere relativity}.
		\bjtitle{Commun. Math. Phys.}
		\bvolume{350},
		\bfpage{749}--\blpage{801}
		(\byear{2017})
		\doiurl{10.1007/s00220-016-2802-9} 
	\end{barticle}
	\endbibitem


	\bibitem[\protect\citeauthoryear{Javaloyes and Soares}{2021}]{Javaloyes2021}
	\begin{barticle}
		\bauthor{\bsnm{Javaloyes}, \binits{M.A.}},
		\bauthor{\bsnm{Soares}, \binits{B.L.}}:
		\batitle{Anisotropic conformal invariance of lightlike geodesics in
		pseudo-{Finsler} manifolds}.
		\bjtitle{Classical and Quantum Gravity}
		\bvolume{38}(\bissue{2}),
		\bfpage{16}
		(\byear{2021})
		\doiurl{10.1088/1361-6382/abc225}
		\bcomment{Id/No 025002}
	\end{barticle}
	\endbibitem

	\bibitem[\protect\citeauthoryear{Javaloyes and Soares}{2014}]{Javaloyes2014}
	\begin{barticle}
		\bauthor{\bsnm{Javaloyes}, \binits{M.A.}},
		\bauthor{\bsnm{Soares}, \binits{B.L.}}:
		\batitle{Geodesics and Jacobi fields of pseudo-Finsler manifolds}.
		\bjtitle{arXiv:1401.8149v1 [math.DG] }
		(\byear{2014})
		\doiurl{10.48550/arXiv.1401.8149}
	\end{barticle}
	\endbibitem




	\bibitem[\protect\citeauthoryear{Candela, Flores and S{\'a}nchez}{2008}]{CaFlSa08}
	\begin{barticle}
		\bauthor{\bsnm{Candela}, \binits{A.~M.}}, 
		\bauthor{\bsnm{Flores}, \binits{J.~L.}},
		\bauthor{\bsnm{S{\'a}nchez}, \binits{M.}}:
		\batitle{Global hyperbolicity and Palais-Smale condition for action functionals in
		stationary spacetimes}.
		\bjtitle{Adv. Math.}
		\bvolume{218}, 
		\bfpage{515--556}
		(\byear{2008})
		\doiurl{10.1016/j.aim.2008.01.004}
	\end{barticle}
	\endbibitem


	\bibitem[\protect\citeauthoryear{Fathi and Siconolfi}{2012}]{FatSic12}
	\begin{barticle}
		\bauthor{\bsnm{Fathi}, \binits{A.}},
		\bauthor{\bsnm{Siconolfi}, \binits{A.}}:
		\batitle{On smooth time functions}.
		\bjtitle{Math. Proc. Cambridge Philos. Soc.}
		\bvolume{152},
		\bfpage{303}--\blpage{339}
		(\byear{2012})
		\doiurl{10.1017/S0305004111000661} 
	\end{barticle}
	\endbibitem


	\bibitem[\protect\citeauthoryear{Bao, Chern and Shen}{2000}]{BaChSh00}
	\begin{barticle}
		\bauthor{\bsnm{Bao}, \binits{D.}}, 
		\bauthor{\bsnm{Chern}, \binits{S.~S.}},
		\bauthor{\bsnm{Shen}, \binits{Z.}}:
		\batitle{An Introduction to {R}iemann-{F}insler geometry}.
		\bjtitle{Springer-Verlag, New York},
		(\byear{2000})
	\end{barticle}
	\endbibitem

\end{thebibliography}

\end{document}